\title{Existentially closed fields with finite group actions}
\author[D.M. HOFFMANN]{Daniel Max Hoffmann$^{\dagger}$}
\thanks{2010 \textit{Mathematics Subject Classification}. Primary 03C60; Secondary 03C45, 12H10}
\thanks{\textit{Key words and phrases}. difference fields.}
\thanks{$^{\dagger}$SDG. Part of this work was conducted during first author's internship at the Warsaw Center of Mathematics and Computer Science.
Supported by Narodowe Centrum Nauki grants no. 2015/19/B/ST1/01150, 2016/20/T/ST1/00482 and 2016/21/N/ST1/01465.}
\address{$^{\dagger}$Instytut Matematyczny\\
Uniwersytet Wroc{\l}awski\\
Wroc{\l}aw\\
Poland}
\email{daniel.hoffmann@math.uni.wroc.pl}
\urladdr{https://www.researchgate.net/profile/Daniel\_Hoffmann8}
\author[P. KOWALSKI]{Piotr Kowalski$^{\spadesuit}$}
\thanks{$^{\spadesuit}$Supported by T\"{u}bitak grant 2221 and supported by Narodowe Centrum Nauki grants no. 2012/07/B/ST1/03513,
2015/19/B/ST1/01150 and 2015/19/B/ST1/01151.}
\address{$^{\spadesuit}$Instytut Matematyczny\\
Uniwersytet Wroc{\l}awski\\
Wroc{\l}aw\\
Poland}
\email{pkowa@math.uni.wroc.pl} \urladdr{http://www.math.uni.wroc.pl/\textasciitilde pkowa/ }
\DeclareMathOperator{\su}{SU}
\DeclareMathOperator{\acl}{acl}  
 \DeclareMathOperator{\aut}{Aut} \DeclareMathOperator{\id}{id}
 \DeclareMathOperator{\fr}{Fr} \DeclareMathOperator{\Fr}{Fr}
  \DeclareMathOperator{\gal}{Gal}
\DeclareMathOperator{\ch}{char}  
 \DeclareMathOperator{\theo}{Th}\DeclareMathOperator{\alg}{alg}
\DeclareMathOperator{\tp}{tp}
\DeclareMathOperator{\spec}{Spec}
\DeclareMathOperator{\ddf}{DF}\DeclareMathOperator{\dcf}{DCF}\DeclareMathOperator{\scf}{SCF}
\DeclareMathOperator{\tcf}{TCF}
\DeclareMathOperator{\Func}{Func}
\DeclareMathOperator{\perf}{perf}
\DeclareMathOperator{\rk}{rk}
\newtheorem{theorem}{Theorem}[section]
\newtheorem{prop}[theorem]{Proposition}
\newtheorem{lemma}[theorem]{Lemma}
\newtheorem{cor}[theorem]{Corollary}
\theoremstyle{definition}
\newtheorem{definition}[theorem]{Definition}
\newtheorem{example}[theorem]{Example}
\newtheorem{remark}[theorem]{Remark}
\newtheorem{question}[theorem]{Question}
\theoremstyle{remark}
\providecommand*{\cupdot}{%
  \mathbin{%
    \mathpalette\@cupdot{}%
  }%
}
\newcommand*{\@cupdot}[2]{%
  \ooalign{%
    $\m@th#1\cup$\cr
    \hidewidth$\m@th#1\cdot$\hidewidth
  }%
}
\def\Ind#1#2{#1\setbox0=\hbox{$#1x$}\kern\wd0\hbox to 0pt{\hss$#1\mid$\hss}
\lower.9\ht0\hbox to 0pt{\hss$#1\smile$\hss}\kern\wd0}
\def\notind#1#2{#1\setbox0=\hbox{$#1x$}\kern\wd0
\hbox to 0pt{\mathchardef\nn=12854\hss$#1\nn$\kern1.4\wd0\hss}
\hbox to 0pt{\hss$#1\mid$\hss}\lower.9\ht0 \hbox to 0pt{\hss$#1\smile$\hss}\kern\wd0}
\begin{document}

\newcommand{\twoc}[3]{ {#1} \choose {{#2}|{#3}}}
\newcommand{\thrc}[4]{ {#1} \choose {{#2}|{#3}|{#4}}}
\newcommand{\Zz}{{\mathds{Z}}}
\newcommand{\Ff}{{\mathds{F}}}
\newcommand{\Cc}{{\mathds{C}}}
\newcommand{\Rr}{{\mathds{R}}}
\newcommand{\Nn}{{\mathds{N}}}
\newcommand{\Qq}{{\mathds{Q}}}
\newcommand{\Kk}{{\mathds{K}}}
\newcommand{\Pp}{{\mathds{P}}}
\newcommand{\ddd}{\mathrm{d}}
\newcommand{\Aa}{\mathds{A}}
\newcommand{\dlog}{\mathrm{ld}}
\newcommand{\ga}{\mathbb{G}_{\rm{a}}}
\newcommand{\gm}{\mathbb{G}_{\rm{m}}}
\newcommand{\gaf}{\widehat{\mathbb{G}}_{\rm{a}}}
\newcommand{\gmf}{\widehat{\mathbb{G}}_{\rm{m}}}
\newcommand{\gdf}{\mathfrak{g}-\ddf}
\newcommand{\gdcf}{\mathfrak{g}-\dcf}
\newcommand{\fdf}{F-\ddf}
\newcommand{\fdcf}{F-\dcf}
\newcommand{\mw}{\scf_{\text{MW},e}}

\maketitle
\begin{abstract}
We study algebraic and model-theoretic properties of existentially closed fields with an action of a fixed finite group. Such fields turn out to be pseudo-algebraically closed in a rather strong sense. We place this work in a more general context of the model theory of fields with a (finite) group scheme action. 
\end{abstract}

\section{Introduction}
A difference field (or a transformal field) is a field with a distinguished collection of endomorphisms. The model theory of difference fields was initiated in \cite{macin1} and analyzed in depth in \cite{acfa1} and \cite{acfa2}. In these works, universal (or generic) difference fields were considered. In terms of group actions, existentially
closed $\mathbb{Z}$-actions on fields were studied in these papers.
The model theory of actions of different types of groups has been also studied. We list several examples below.
\begin{itemize}
\item Actions of the free groups $F_n$: Hrushovski \cite{Hr2}, Kikyo-Pillay \cite{KiPi}, Sj\"{o}gren \cite[Theorem 16]{sjogren}, Moosa-Scanlon
\cite[Proposition 4.12]{moosca2}; model companion (called ACFA$_n$) exists.

\item Actions of $\mathbb{Q}$: Medvedev \cite{med1}; model companion (called $\mathbb{Q}$ACFA) exists.

\item Actions of $\Zz\times \Zz$: Hrushovski (unpublished, 
see also Theorem 3.2 in \cite{kikyo2}); no model companion.
\end{itemize}
In this paper, we deal with the model theory of actions of a finite group by field automorphisms.

After writing a preliminary version of this paper, we learned (to our great surprise) from Zo\'{e} Chatzidakis that a similar topic was already picked up by Nils Sj\"{o}gren in \cite{sjogren}, where he considered model theory 
of actions of an \emph{arbitrary} group on fields. The reference \cite{sjogren} is a ``research report'' which consists of Sj\"{o}gren's PhD Thesis (Filosofie licentiatavhandling). We are not sure whether \cite{sjogren} is the final version of the thesis. As far as we know, Sj\"{o}gren's work has never been published in a peer-reviewed journal. It turned out that our work here, although entirely independent, has quite a large intersection with \cite{sjogren}. In the cases of such an intersection, we always quote \cite{sjogren} and also try to comment on Sj\"{o}gren's version of a given argument. There is one case where Sj\"{o}gren had obtained a full result (Theorem \ref{sjor}) and we were able to analyze a special case only (Example \ref{ex39}); we state his result in Section \ref{secufc} and provide a quick argument there.

We study Galois field extensions with a fixed finite Galois group $G$. We call such an extension \emph{$G$-transformal field}. We give geometric axioms of the theory of existentially closed $G$-transformal fields and call the resulting theory $G-\tcf$. Using these axioms, we show that the underlying field and the field of constants of any model of $G-\tcf$ are pseudo-algebraically closed (abbreviated PAC). We describe purely algebraically the constant fields of models of $G-\tcf$: they are perfect PAC fields satisfying an extra ``$G$-closedness'' condition (see Theorem \ref{equiv}). The $G$-closedness condition implies that the underlying PAC field is bounded (see Corollary \ref{4_to_2}), hence applying known results about PAC fields we conclude that the theory $G-\tcf$ is supersimple of finite SU-rank (Proposition \ref{su_su}). We also show that the theory of the PAC fields we consider is model complete only after adding finitely many constant symbols (Theorem \ref{complete411}), as opposed to the general PAC case, where infinitely many constants are needed (see \cite[Prop. 4.6]{ChaPil}).

We characterize the class of PAC fields which appear as constant fields of models of $G-\tcf$. It turns out that such PAC fields are not ``very far'' from being algebraically closed, they satisfy the ``$K$-strongly PAC'' condition (Corollary \ref{4_to_2}). This condition says that if $K$ is the underlying field of a model of $G-\tcf$ and $C$ is the field of constants, then any $K$-irreducible $C$-variety has a $C$-rational point.
Therefore, this condition is much stronger than PAC since we need the irreducibility only after extending the scalars to a finite field extension.
Testing the existence of rational points on the $K$-irreducible varieties (a much wider class than the absolutely irreducible ones) is related to a similar condition given in the axioms for $G-\tcf$ (Definition \ref{def_axioms}).
Since our perfect PAC fields are not quasi-finite (i.e. the absolute Galois group is not isomorphic to $\widehat{\Zz}$), they are not pseudo-finite. A full description of the absolute Galois group of the constant field of a model of $G-\tcf$ was obtained by Sj\"{o}gren in \cite{sjogren} (it is \emph{universal Frattini cover} of $G$, see \cite[Section 22.6]{FrJa}), we quote Sj\"{o}gren's results in Theorem \ref{sjor}.

In a broader context, this work should be seen as a part of our project of investigating the model theory of finite group scheme actions. To understand the content of this paper, it is not necessary to know what a (finite) group scheme is. However, to put our work to a wider perspective, in this paragraph, we freely use the notion of (finite) group schemes and some of their properties (a reader may consult \cite{Water} for necessary definitions). Let $\mathfrak{g}$ be a finite group scheme. Then the connected component $\mathfrak{g}^0$ of $\mathfrak{g}$ is an infinitesimal group scheme and the quotient $\mathfrak{g}/\mathfrak{g}^0$ is an \'{e}tale finite group scheme.  Over an algebraically closed field, an \'{e}tale finite group scheme may be identified with a finite group $G$ (over an arbitrary field we need to take into account an action of the absolute Galois group on $G$). The actions of infinitesimal group schemes correspond to truncated Hasse-Schmidt derivations and their model theory was worked out in \cite{HK}. This paper deals with the model theory of the actions of the discrete quotient $G$. In a subsequent work, we plan to analyze the more general case of the model theory of actions of arbitrary finite group schemes. This research fits to even a more general circle of topics (see e.g. \cite{kamen1} and \cite{moosca2}) concerning the model theory of fields with operators.

The paper is organized as follows. In Section \ref{sec2}, we show basic facts about $G$-transformal fields and give a geometric axiomatization for the class of existentially closed $G$-transformal fields (obtaining the theory $G-\tcf$). In Section \ref{sec3}, we prove algebraic properties of existentially closed $G$-transformal fields and characterize them as $G$-closed perfect pseudo-algebraically closed fields. In Section \ref{logic}, we use the results of Section \ref{sec3} to obtain model-theoretical properties of the theory $G-\tcf$. In Section \ref{last}, we describe more precisely how the results of this paper fit into more general contexts (arbitrary affine group scheme or a possibly infinite group) and we state several questions.

We would like to thank the referee for correcting several errors and for many interesting mathematical comments.

\section{The theory of $G$-transformal fields and its model companion}\label{sec2}
Let $G=\lbrace g_1,\ldots,g_{e}\rbrace$ be a finite group of order $e$, with the neutral element $g_1=1$. 
By $\mathcal{L}_G=\lbrace +,\cdot,\sigma_{1},\ldots,\sigma_{e},0,1\rbrace$ we denote the language of rings with $e$ additional unary operators.
Sometimes $\sigma_k$ will be denoted by $\sigma_{g_k}$ or $\sigma_g$, and
$(\sigma_1,\ldots,\sigma_e)$ by $\overline{\sigma}$ or even by $G$.
For any $\mathcal{L}_G$-structure $(R,\overline{\sigma})$ and any $\bar{r}=(r_1,\ldots,r_n)\in R^n$ we use the following convention
$$\sigma_k(\bar{r})=\big(\sigma_k(r_1),\ldots,\sigma_k(r_n)\big),$$
where $k\leqslant e$. If $\bar{r}_1,\ldots,\bar{r}_e\in R^n$, then
$$\bar{\sigma}\big(\bar{r}_1,\ldots,\bar{r}_e\big):=
\big(\sigma_1(\bar{r}_1),\ldots,\sigma_e(\bar{r}_e)\big),$$
$$\bar{\sigma}(\bar{r}_1):=\big(\sigma_1(\bar{r}_1),\ldots,\sigma_e(\bar{r}_1)\big).$$
\begin{definition}
 An $\mathcal{L}_G$-structure $(R,\overline{\sigma})$ is called $G$-\emph{transformal ring} if
 \begin{itemize}
  \item[i)] the structure $(R,+,\cdot,0,1)$ is a ring,
  \item[ii)] the map $G\ni g_k\mapsto\sigma_k\in\aut(R)$ is a group homomorphism.
 \end{itemize}
 An $\mathcal{L}_G$-structure $(K,\overline{\sigma})$ is called $G$-\emph{transformal field} if $(K,\overline{\sigma})$ is a $G$-transformal ring and $K$ is a field. 
 We will consider $G$-\emph{transformal extensions}, i.e. ring extenions
preserving the whole structure of $G$-transformal rings.
\end{definition}

\begin{definition}
Assume that $(R,\overline{\sigma})$ is a $G$-transformal ring. We consider the set
$$R^G=\lbrace a\in R\;\;|\;\;\sigma_k(a)=a\text{ for each }k\leqslant e\rbrace$$
and call it \emph{ring of constants} or \emph{field of constants} if $R$ is a field.
A $G$-transformal field $(K,\overline{\sigma})$ is called \emph{strict} if $[K:K^G]=e \;(=|G|)$.
\end{definition}
\noindent
Our choice of the name ``strict" is related with the 
analogous name in the case of Hasse-Schmidt derivations (\cite[Definition 3.5]{HK}, \cite[page 1.]{Zieg3}), 
where, together with some assumptions about the degree of imperfection, it leads to linear disjointness of constants in Hasse-Schmidt extensions.
In Remark \ref{rem_Cspace}(1) below, we  will see that strict $G$-transformal fields enjoy a similar property.

\begin{remark}\label{rem_Cspace}
Let $(K,\overline{\sigma})\subseteq(K',\overline{\sigma}')$ be a $G$-transformal extension, $(K,\overline{\sigma})$ be a strict $G$-transformal field, $C:=K^G$ and $C':=(K')^G$. Moreover, we 
fix $\lbrace v_1,\ldots,v_{e}\rbrace$, a $C$-linear basis of $K$.
\begin{enumerate}
\item Clearly, we have
$$[K':C']=|\gal(K'/C')|\leqslant e.$$
On the other hand, the map $G\to \aut(C'(v_1,\ldots,v_e))$ is an embedding, so we have
$$[C'(v_1,\ldots,v_e):C']\geqslant e.$$
Therefore $K'=C'(v_1,\ldots,v_e)$ and the elements $v_1,\ldots,v_e$ are linearly independent over $C'$. Hence $K$ and $C'$ are linearly disjoint over $C$.

\item Because $[K:C]=e$, there exists a finite tuple $\bar{c}\subseteq C$ which defines a multiplication
on the $C$-linear space $C^{e}$, which coincides with the multiplication on the field $K$. More precisely, we determine $\bar{c}=(c_{i,j,l})_{i,j,l\leqslant e}$ by
$$v_i\cdot v_j=\sum\limits_{l=1}^{e} c_{i,j,l}\,v_l,$$
where $v_i\cdot v_j$ denotes the multiplication in $K$. In a similar way, we choose parameters $\bar{d}=(d_{k,j,l})_{k,j,l\leqslant e}$ which define the
action of $G$ by automorphisms, i.e.
$$\sigma_k(v_j)=\sum\limits_{l=1}^{e} d_{k,j,l}\,v_l.$$
 
\item By item (1), the above discussed constants $\bar{c}'$ and $\bar{d}'$ 
 for $(C',K')$ can be choosen inside the field $C$.
 
\item Choose $\bar{c},\bar{d}\subseteq C$ right for both 
$(C,K)$ and $(C',K')$ as in item (3). On $(C')^e$ we introduce a $G$-transformal field structure, i.e. the addition and the multiplication ($e^2$-ary functions on $C'$) are given by
 \begin{equation*}
   \begin{bmatrix}
   \alpha_1\\
   \vdots\\
   \alpha_e
  \end{bmatrix}\oplus
  \begin{bmatrix}
   \beta_1\\
   \vdots\\
   \beta_e
  \end{bmatrix} =
 \begin{bmatrix}
  \alpha_1+\beta_1\\
  \vdots \\
  \alpha_e+\beta_e
 \end{bmatrix},\quad 
  \begin{bmatrix}
   \alpha_1\\
   \vdots\\
   \alpha_e
  \end{bmatrix}\odot
  \begin{bmatrix}
   \beta_1\\
   \vdots\\
   \beta_e
  \end{bmatrix} =
 \begin{bmatrix}
  \sum\limits_{i,j\leqslant e}\alpha_i\beta_j c_{i,j,1}\\
  \vdots \\
  \sum\limits_{i,j\leqslant e}\alpha_i\beta_j c_{i,j,e}
 \end{bmatrix},
\end{equation*}
and for each $k\leqslant e$ the automorphism (an $e$-ary function on $C'$) is given by
\begin{equation*}
 \tilde{\sigma}_k\begin{bmatrix}
  \alpha_1\\
  \vdots\\
  \alpha_e
 \end{bmatrix} = 
 \begin{bmatrix}
  \sum\limits_{j\leqslant e}d_{k,j,1}\,\alpha_j\\
  \vdots\\
  \sum\limits_{j\leqslant e}d_{k,j,e}\,\alpha_j
 \end{bmatrix}
\end{equation*}
(the constants $0$ and $1$ are of the form $(0,\ldots,0)$ and $(1,0,\ldots,0)$).
Then the function 
$$f:K'\ni \alpha_1 v_1+\ldots+\alpha_e v_e\mapsto (\alpha_1,\ldots,\alpha_e)\in (C')^e$$
is an isomorhism between $G$-transformal fields.
Due to the choice of $\bar{c}$ and $\bar{d}$, we obtain that $\oplus$, $\odot$ and $(\tilde{\sigma}_k)_{k\leqslant e}$ restrict to $C^e$ and $f|_{K}:K\to C^e$ is an isomorphism of $G$-transformal fields as well. It is obvious, but important to note that the following diagram of $G$-transformal
morphisms commutes
\begin{equation*}\label{easy_diag}
 \xymatrix{  K \ar[r]^{\cong}_{f|_K}\ar[d]_{\subseteq} & C^e \ar[d]^{\subseteq} \\
 K' \ar[r]^{\cong}_{f} & (C')^e.}
\end{equation*}
\end{enumerate}
\end{remark}

\begin{definition}
Let $(R,\overline{\sigma})$ be a $G$-transformal ring and let $I$ be an ideal of $R$.
We say that $I$ is $G$-\emph{invariant} ideal if for each $k\leqslant e$, we have
$$\sigma_k(I)\subseteq I.$$
\end{definition}\noindent
\begin{remark}\label{linijka1}
Note that the above condition is equivalent to $\sigma_k(I)=I$ for each $k\leqslant e$. 
Clearly, for a $G$-invariant ideal $I$, there is a unique $G$-transformal structure on $R/I$ such that the quotient map is a $G$-transformal map.
\end{remark}

We define a group action of $G$ on $\lbrace 1,\ldots,e\rbrace$ as follows:
$$g_k\ast l=j \iff g_kg_l=g_j,$$
so $g_1$ still acts as the identity.
For a fixed $n>0$ and $i\leqslant e$, let $\pmb{X}_i$ denote the $n$-tuple of variables, $(X_{i,1},\ldots,X_{i,n})$. 
For a $G$-transformal ring $(R,\overline{\sigma})$, we introduce a $G$-transformal
ring structure on the ring $R[\pmb{X}_1,\ldots,\pmb{X}_e]$, by
$$\sigma_{g_k}(f(\pmb{X}_1,\ldots,\pmb{X}_e))=f^{\sigma_{g_k}}(\pmb{X}_{g_k\ast 1},\ldots,\pmb{X}_{g_k\ast e}),$$
where 
$$\Big(\sum\limits_{\mathbf{i}} r_{\mathbf{i}} \bar{\pmb{X}}^{\mathbf{i}} \Big)^{\sigma_{g_k}}=
\sum\limits_{\mathbf{i}} \sigma_{g_k}(r_{\mathbf{i}}) \bar{\pmb{X}}^{\mathbf{i}} .$$
For any ring $R$ and any ideal $I\trianglelefteqslant R[\pmb{X}_1,\ldots,\pmb{X}_e]$, we introduce
$$V_R(I):=\lbrace \bar{r}\in R^{ne}\;|\;  (\forall f\in I)(f(\bar{r})=0)\rbrace.$$

\begin{definition}\label{def_axioms}
 A $G$-transformal field $(K,\overline{\sigma})$ is a model of $G-\tcf$, if
 for every $n\in\mathbb{N}_{>0}$ and $|\pmb{X}_1|=\ldots=|\pmb{X}_e|=n$, it satisfies the following axiom scheme:
\begin{itemize}
 \item[($\clubsuit$)]  
 for any $I,J\trianglelefteqslant K[\pmb{X}_1,\ldots,\pmb{X}_e]$ such that $I\subsetneq J$
 and $I$ is a $G$-invariant prime ideal,
 there is $a\in K^n$ satisfying $\overline{\sigma}(a)\in V_K(I)\setminus V_K(J)$.
\end{itemize}
\end{definition}

\begin{remark}\label{rem_axioms}
We will see now that the above axiom scheme is actually first order. We give a geometric version of it and discuss how does it correspond to the geometric axioms of ACFA and of some other theories.
\begin{enumerate}
 \item 
 It is rather standard to see that the above scheme of axioms are first-order, but we still give a detailed argument using \cite{bounds}.
\begin{enumerate}
\item
By \cite[Theorem 2.10.i)]{bounds}, for each $n,d\in\mathbb{N}$ there exists     $B(n,d)\in\mathbb{N}$ such that for every field $K$ and
 every $I\trianglelefteqslant K[X_1,\ldots,X_n]$, generated by polynomials of degree $\leqslant d$, the following are equivalent
 \begin{itemize}
 \item $I\text{ is prime or }1\in I$
 \item $\text{for all }f,g\in K[X_1,\ldots,X_n]\text{ of degree }\leqslant B(n,d),\text{ if } fg\in I,\\\text{ then } f\in I\text{ or }g\in I.$
 \end{itemize}
  
 Moreover, the Property (I) at \cite[p. 78]{bounds} says that for $f_0,\ldots,f_m\in K[X_1,\ldots,X_n]$, all of degree $\leqslant d$, we have
 $$f_0\in (f_1,\ldots,f_m)\quad\Rightarrow\quad(\exists h_1,\ldots,h_m)\big(f_0=\sum\limits_{i=1}^{m}h_if_i\big),$$
 where the degree of each $h_1,\ldots, h_m$ is bounded by a universal constant $A(n,d)$. 
 Therefore being a prime ideal in $K[X_1,\ldots,X_n]$ is a first order statement over a field $K$, coded by a tuple $(a_{i_1,\ldots,i_n,s})\subseteq K$,
 where $i_1,\ldots,i_n\leqslant d$ and $s\leqslant m$, corresponding to generators of this prime ideal in $K[X_1,\ldots,X_n]$.
 The case $1\in I$ can not occur, since in our case there is an ideal $J$ properly containing $I$.
 
\item 
 The condition $I\subsetneq J$ can be expressed as follows
 $$f_1,\ldots,f_M\in J,$$
 $$g_1\not\in I\;\vee\ldots\vee\;g_N\not\in I,$$
 where $I=(f_1,\ldots,f_M)$, $J=(g_1,\ldots,g_N)$ and
 the symbols ``$\in$'' and ``$\not\in$'' are considered in $K[X_1,\ldots,X_{e}]$.
 By the above discussion, this is a first order statement over the field $K$.
 
\item
By the Property (I) from \cite[p. 78]{bounds}, the assumption ``$I$ is $G$-invariant" is also a first-order statement.
For $I$ generated by a polynomial in one variable $f$, the $G$-invariantness of $I$ assures us that permuting roots (by the action of $G$) of $f$ in its splitting field extension will lead to a $G$-transformal structure on this extension.
\end{enumerate} 
  \item 
We will give a geometric interpretation of $(\clubsuit)$, i.e. we express $(\clubsuit)$ in the language of schemes. The language of schemes (rather than of algebraic varieties) is necessary here, since our schemes need not be absolutely irreducible. We need to provide some set-up first.
\begin{enumerate}
\item The group $G$ acts on the set $\{1,\ldots,e\}$ 
(as in the paragraph after Remark \ref{linijka1}) and this action gives a coordinate action on the affine space $\Aa^{ne}$ by automorphisms.
    
\item Let $T$ be a $K$-scheme and $\sigma\in \aut(K)$. Then one defines a ``twisted'' $K$-scheme as follows
$$T^{\sigma}:=T\times_{\spec(K)}(\spec(K),\spec(\sigma)).$$
If $T=\spec(K[X_1,\ldots,X_e]/I)\subseteq \Aa^{ne}$, then $T^{\sigma}$ may be identified with $\spec(K[X_1,\ldots,X_e]/I^{\sigma})\subseteq \Aa^{ne}$, where $I^{\sigma}:=\{f^{\sigma}\ |\ f\in I\}$.
\end{enumerate}
The geometric version of $(\clubsuit)$ can be expressed as follows.
\begin{itemize}
\vspace{2mm}
\item[$(\clubsuit^{\mathrm{g}})$] Suppose that $V\subseteq \Aa^{ne}$ is a $K$-irreducible $K$-subscheme and $W\varsubsetneq V$ is a proper $K$-subscheme. If for any $\sigma\in G$, we have $\sigma(V)=V^{\sigma}$ (where $\sigma(V)$ is as in (a) above and $V^{\sigma}$ is as in (b) above), then there is $a\in \Aa^n(K)$ such that $(\sigma_1(a),\ldots,\sigma_e(a))\in V(K)\setminus W(K)$.
\vspace{2mm}
\end{itemize}
 
 \item At first, it looks like our axioms differ in their form from the axioms of ACFA in three ways.
\begin{enumerate}
\item We need to consider only the ambient varieties $U$ of the form $\Aa^n$.

\item We do not need the projection maps to be dominant.

\item We have an extra ``$G$-invariance'' assumption.
\end{enumerate}
However, it turns out that the ``$G$-invariance'' assumption implies that our axioms are closer to the axioms of ACFA, than they seem to be (similarly as for derivations of Frobenius \cite{K2} or Hasse-Schmidt derivations \cite{K3}, \cite{HK1}). Assume for simplicity that $|G|=2$, so a $G$-difference field has two automorphisms: $\id,\sigma$. If $V_i$ is the Zariski closure of the projection of $V$ on the $i$-th coordinate ($i=1,2$), then $\sigma(V_1)=V_2$ (and $\sigma(V_2)=V_1$). Hence $V\subseteq V_1\times \sigma(V_1)$ and the projections $V\to V_1,V\to \sigma(V_1)$ \emph{are} dominant as in the case of axioms of ACFA. Similarly for an arbitrary finite group $G$.

  \item There are some similarities between the geometric version of the axioms of $G$-TCF (the condition $(\clubsuit^{\mathrm{g}})$ in item $(2)$ above) and the geometric axiomatizations for the theories of fields with Hasse-Schmidt derivations (see \cite{K3} and \cite{HK}). For any $K$-scheme $T$, the scheme $T^{\sigma_1}\times \ldots \times T^{\sigma_e}$ corresponds to $\nabla T$ which is the \emph{prolongation} of $T$ (see Section 5.1 in \cite{HK}) and the geometric axioms of $G$-TCF can be put into an equivalent form, which would resemble the geometric axioms from \cite{HK}. 
 \end{enumerate}
\end{remark}
We proceed now to show that the theory $G-\tcf$ axiomatises the class of existentially closed $G$-transformal fields.

\begin{lemma}\label{embeds}
 Every $G$-transformal field embeds in a model of $G-\tcf$.
\end{lemma}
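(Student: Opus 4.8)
The plan is to start with an arbitrary $G$-transformal field $(K,\overline{\sigma})$ and build, by transfinite iteration, a $G$-transformal extension in which every instance of the axiom scheme $(\clubsuit)$ is satisfied; then take a union along a chain, and finally iterate the whole construction $\omega$ times so that witnesses required by instances appearing at one stage are actually found at a later stage. The heart of the matter is a single ``one-step'' lemma: given $(K,\overline{\sigma})$ and a specific pair $I \subsetneq J$ of ideals in $K[\pmb{X}_1,\ldots,\pmb{X}_e]$ with $I$ a $G$-invariant prime ideal, find a $G$-transformal extension $(K',\overline{\sigma}')$ containing a point $a \in (K')^n$ with $\overline{\sigma}'(a) \in V_{K'}(I)\setminus V_{K'}(J)$.

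For this one-step construction, the key observation is that $G$-invariantness of the prime ideal $I$ lets us manufacture the needed automorphisms on a domain. Let $L_0 := \operatorname{Frac}\big(K[\pmb{X}_1,\ldots,\pmb{X}_e]/I\big)$, and let $a = (a_1,\ldots,a_e)$ be the image of the tuple of generic coordinates, so $a_i \in L_0^n$. Because $I$ is prime, $L_0$ is a field extension of $K$; because $I \subsetneq J$, the point $a$ avoids $V(J)$ (here one uses that $J \not\subseteq I$, so some generator of $J$ is nonzero at the generic point). It remains to equip $L_0$ (or a suitable extension) with an action of $G$ extending $\overline{\sigma}$ on $K$ and sending the first block $a_1 = (X_{1,1},\ldots,X_{1,n}) \bmod I$ in such a way that $\sigma_{g_k}(a_1) = a_{g_k \ast 1}$ for all $k$. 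The $G$-invariance hypothesis $\sigma_{g_k}(I) = I$ (Remark \ref{linijka1}), together with the defining formula $\sigma_{g_k}(f(\pmb{X}_1,\ldots,\pmb{X}_e)) = f^{\sigma_{g_k}}(\pmb{X}_{g_k\ast 1},\ldots,\pmb{X}_{g_k\ast e})$, says precisely that the assignment $\pmb{X}_l \mapsto \pmb{X}_{g_k \ast l}$ combined with $\sigma_{g_k}$ on coefficients induces a well-defined ring automorphism of $K[\pmb{X}_1,\ldots,\pmb{X}_e]/I$, hence of its fraction field $L_0$; and $k \mapsto g_k \ast (-)$ being a group action on $\{1,\ldots,e\}$ makes $k \mapsto (\text{this automorphism})$ a group homomorphism $G \to \aut(L_0)$. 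One checks directly that these automorphisms restrict to $\overline{\sigma}$ on $K$, that $(L_0,\overline{\sigma}')$ is a $G$-transformal extension of $(K,\overline{\sigma})$, and that $\overline{\sigma}'(a_1) = (a_1,\ldots,a_e) = a \in V_{L_0}(I) \setminus V_{L_0}(J)$ by construction. So $a_1$ is the desired witness in $(L_0, \overline{\sigma}')$.

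With the one-step lemma in hand, the rest is bookkeeping. At a given stage we have a $G$-transformal field and a set of ``requirements'': for each $n > 0$ and each pair $(I,J)$ of ideals in the polynomial ring over the current field with $I$ a $G$-invariant prime and $I \subsetneq J$, we want a witness. Enumerate all such requirements over the current field; handle them one at a time by the one-step lemma, taking unions of $G$-transformal extensions along the resulting chain (a directed union of $G$-transformal fields is again one, since each operator is defined coordinatewise and the field axioms and homomorphism property are preserved under unions). This produces $(K,\overline{\sigma}) = (K_0,\overline{\sigma}_0) \subseteq (K_1,\overline{\sigma}_1) \subseteq \cdots$ where every requirement posed over $K_m$ is satisfied in $K_{m+1}$. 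Let $(K_\omega,\overline{\sigma}_\omega) := \bigcup_m (K_m,\overline{\sigma}_m)$. Then $(K_\omega, \overline{\sigma}_\omega)$ is a $G$-transformal field, and given any instance of $(\clubsuit)$ over $K_\omega$ — finitely many polynomials generating $I$ and $J$ — those polynomials already lie in $K_m[\pmb{X}_1,\ldots,\pmb{X}_e]$ for some $m$; the properties ``$I$ prime'', ``$I$ $G$-invariant'', ``$I \subsetneq J$'' persist, so the witness found in $K_{m+1} \subseteq K_\omega$ works. Hence $(K_\omega,\overline{\sigma}_\omega) \models G\text{-}\tcf$.

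The main obstacle is the well-definedness and compatibility in the one-step lemma: verifying that the twisted substitution really descends to an automorphism of the fraction field of $K[\pmb{X}_1,\ldots,\pmb{X}_e]/I$ (this is where $G$-invariantness of $I$ is indispensable), that the resulting family is a genuine group action, and that it restricts correctly to the original $\overline{\sigma}$ on $K$. One subtlety worth flagging: a priori one might worry whether a requirement over $K_\omega$ could fail to be ``seen'' at any finite stage, but since ideals in $K_\omega[\pmb{X}_1,\ldots,\pmb{X}_e]$ are finitely generated (Hilbert basis) and each generator has finitely many coefficients, every requirement is indeed inherited from some $K_m$; no further care beyond the standard chain argument is needed. (Alternatively, and perhaps more cleanly, one can phrase the whole thing model-theoretically: it suffices to show every existentially closed $G$-transformal field is a model of $G\text{-}\tcf$ and conversely, but the direct construction above is self-contained.)
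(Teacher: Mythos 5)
Your overall strategy is the same as the paper's: a one-step construction via the fraction field of $K[\pmb{X}_1,\ldots,\pmb{X}_e]/I$ equipped with the $G$-action induced by the $G$-invariance of $I$, followed by a transfinite enumeration of requirements at each stage and an $\omega$-iteration with a finite-generation argument at the end. The one-step lemma itself and the final ``every instance lives at some finite stage'' verification are correct and match the paper.

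There is, however, a gap in the bookkeeping, at exactly the point the paper takes care to address. When the $\alpha$-th requirement $(I_\alpha,J_\alpha)$ --- a pair of ideals of $K_m[\pmb{X}_1,\ldots,\pmb{X}_e]$ --- comes up, the ambient field has already grown to some $K_{m,\alpha}\supseteq K_m$, so to invoke your one-step lemma you must pass to the extended ideals $I',J'$ in $K_{m,\alpha}[\pmb{X}_1,\ldots,\pmb{X}_e]$. Two things then need checking: that $I'\subsetneq J'$ (true, by linear disjointness of $K_m[\pmb{X}_1,\ldots,\pmb{X}_e]$ and $K_{m,\alpha}$ over $K_m$ --- worth a line), and, more seriously, that $I'$ is still prime. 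The latter can genuinely fail (already $(X^2+1)\subset \mathbb{R}[X]$ stops being prime after extending scalars to $\mathbb{C}$), so your claim that ``every requirement posed over $K_m$ is satisfied in $K_{m+1}$'' is not justified and is false in general. The paper's remedy is to simply \emph{skip} any requirement whose extension fails to be prime, and to observe in the final verification that this is harmless: if $I\subsetneq J$ is an instance of $(\clubsuit)$ over the limit field and $I^0=I\cap K_m[\pmb{X}_1,\ldots,\pmb{X}_e]$ contains generators of $I$, then for every intermediate field $K_{m,\alpha}$ the extension of $I^0$ equals the contraction $I\cap K_{m,\alpha}[\pmb{X}_1,\ldots,\pmb{X}_e]$ (again by freeness/linear disjointness), hence is prime, so the requirements that actually matter at the limit are never skipped. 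With this amendment your argument coincides with the paper's proof.
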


\begin{proof}
We take a $G$-transformal field $(K,\overline{\sigma})$ and recursively construct a tower of $G$-transformal fields
$$(K,\overline{\sigma})=:(L^0,\overline{\sigma}^0)\subseteq (L^1,\overline{\sigma}^1)\subseteq\ldots$$
in such a way that $L^{i+1}$ 
satisfies ($\clubsuit$) for all $I,J$ \emph{defined} over $L^i$ (i.e. $I,J$ have generators with coefficients from $L^i$).
Assume that we already have $(L^0,\overline{\sigma}^0),\ldots, (L^i,\overline{\sigma}^i)$ and let $(n_{\alpha},I_{\alpha},J_{\alpha})_{0<\alpha<\kappa}$ be an enumeration of all
$$n_{\alpha}\in\mathbb{N}_{>0},\;\;I_{\alpha},J_{\alpha}\trianglelefteqslant L^i[X_1,\ldots,X_e],\;\;|X_1|=\ldots=|X_e|=n_{\alpha}$$
as in ($\clubsuit$) from the axioms of $G-\tcf$.

We recursively define an auxiliary tower of $G$-transformal fields $(L^{i,\alpha},\overline{\sigma}^{i,\alpha})$ for $0\leqslant \alpha<\kappa$. We define
$(L^{i,0},\overline{\sigma}^{i,0}):=(L^i,\overline{\sigma}^i)$, and
the limit ordinal step is standard (we take the increasing union of $G$-transformal fields). For the step with a succesor ordinal $\alpha+1$ we proceed as follows.
Assume that $(L^{i,\alpha},\overline{\sigma}^{i,\alpha})$ is given and consider
$$I':=I_{\alpha}L^{i,\alpha}[X_1,\ldots,X_e],\quad J':=J_{\alpha}L^{i,\alpha}[X_1,\ldots,X_e].$$
Since $L^i[X_1,\ldots,X_e]$ and $L^{i,\alpha}$ are linearly disjoint over $L^i$, we have $I'\subsetneq J'$. However, the ideal $I'$ need not be prime, so we consider two cases.
\\
\\
{\bf Case 1} The ideal $I'$ is not prime.
\\
We just take $(L^{i,\alpha+1},\overline{\sigma}^{i,\alpha+1}):=(L^{i,\alpha},\overline{\sigma}^{i,\alpha})$ (so we ``ignore'' this case).
\\
\\
{\bf Case 2} The ideal $I'$ is prime.
\\
We see that $I'$ is $G$-invariant in $L^{i,\alpha}[X_1,\ldots,X_e]$ with the $G$-transformal ring structure defined under Remark \ref{linijka1}. By Remark \ref{linijka1}, there is a $G$-transformal ring structure on $L^{i,\alpha}[X_1,\ldots,X_e]/I'$, which extends $(L^{i,\alpha},\overline{\sigma}^{i,\alpha})$ and which can be extended to the fraction field
of $L^{i,\alpha}[X_1,\ldots,X_e]/I'$. We define $(L^{i,\alpha+1},\overline{\sigma}^{i,\alpha+1})$ as this fraction field with the described $G$-transformal field structure.

For $a:=X_1+I'$ , we have $\sigma_j(a)=X_{j}+I'$,
hence
$f(\sigma_1(a),\ldots,\sigma_e(a))=0$ in $L^{i,\alpha+1}$ for every $f\in I'$,
so we have the following
$$\big(\sigma_1(a),\ldots,\sigma_e(a)\big)\in V_{L^{i,\alpha+1}}(I')=V_{L^{i,\alpha+1}}(I_{\alpha}).$$
On the other hand, we clearly have
$$\big(\sigma_1(a),\ldots,\sigma_e(a)\big)\not\in V_{L^{i,\alpha+1}}(J')=V_{L^{i,\alpha+1}}(J_{\alpha}),$$
since otherwise it will lead to a contradiction with the condition $I'\subsetneq J'$.
\\

We set $(L^{i+1} ,\overline{\sigma}^{i+1}):=\bigcup\limits_{\alpha<\kappa}(L^{i,\alpha},\overline{\sigma}^{i,\alpha})$. The $G$-transformal field we are looking for is defined as
$$(L,\overline{\sigma}^L):=\bigcup\limits_{i<\omega}(L^i,\overline{\sigma}^i).$$

To see that $(L,\overline{\sigma}^L)$ satisfies ($\clubsuit$), take $n\in\mathbb{N}_{>0}$ and $I,J\trianglelefteqslant L[X_1,\ldots,X_e]$ as in the assumptions of ($\clubsuit$). Because $I$ and $J$ are finitely generated, there is $i\in\mathbb{N}$ such that all generators of $I$ and $J$, all polynomials asserting that generators of $I$ are some combinations of generators of $J$, and polynomial asserting that $I\neq J$ lie in $L^i[X_1,\ldots,X_e]$.
For 
$$I^0:=I\cap L^i[X_1,\ldots,X_e],\;\;J^0:=J\cap L^i[X_1,\ldots,X_e],$$
we have
$I^0\subsetneq J^0$ and the ideal $I^0$ is prime and $G$-invariant. Therefore $I^0=I_{\alpha}$, $J^0=J_{\alpha}$ for some ideals from the above procedure (Case 2). So, there is an element $a\in (L^{i,\alpha})^n\subseteq L^n$ such that $\big(\sigma_1(a),\ldots, \sigma_e(a)\big)\in V_{L^{i,\alpha}}(I_{\alpha})\setminus V_{L^{i,\alpha}}(J_{\alpha})$. Obviously
$\big(\sigma_1(a),\ldots, \sigma_e(a)\big)\in V_L(I_{\alpha})=V_L(I)$
and $\big(\sigma_1(a),\ldots, \sigma_e(a)\big)\not\in V_L(J_{\alpha})=V_L(J)$.
\end{proof}

\begin{lemma}\label{mod_complete}
 The theory $G-\tcf$ is model complete.
\end{lemma}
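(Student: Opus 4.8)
The plan is to use Robinson's test: it suffices to check that whenever $(K,\overline{\sigma})\subseteq(L,\overline{\sigma})$ is an extension of models of $G-\tcf$, then $(K,\overline{\sigma})$ is existentially closed in $(L,\overline{\sigma})$ (in fact the argument below will use only that $(L,\overline{\sigma})$ is a $G$-transformal extension of $(K,\overline{\sigma})$, so it shows the slightly stronger statement that every model of $G-\tcf$ is existentially closed in each of its $G$-transformal extensions). So I would fix such an extension, a finite tuple $\bar a\subseteq K$, a quantifier-free $\mathcal{L}_G$-formula $\phi(\bar x)$ over $\bar a$ with $|\bar x|=n$, and a realization $\bar b\in L^n$ of $\phi$, and then produce a realization of $\phi$ inside $K$. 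First I would put $\phi$ into geometric form: passing to disjunctive normal form and keeping the disjunct realized by $\bar b$, I may assume $\phi$ is a conjunction of literals; and since each $\sigma_i$ is a ring endomorphism and $G$ is a group (so every composite $\sigma_{g_k}\circ\sigma_{g_l}$ is again one of the operators, namely $\sigma_{g_kg_l}$), every $\mathcal{L}_G$-term over $\bar a$ in $\bar x$ equals a polynomial over $K$ in the $ne$ variables $\overline{\sigma}(\bar x)=(\sigma_1(\bar x),\dots,\sigma_e(\bar x))$. Gathering the equalities and multiplying the inequalities together (we are in a field), I may thus assume $\phi(\bar x)$ is of the form $\bigwedge_i p_i(\overline{\sigma}(\bar x))=0\ \wedge\ q(\overline{\sigma}(\bar x))\neq 0$ for suitable $p_i,q\in K[\pmb{X}_1,\dots,\pmb{X}_e]$.

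Next I would feed the appropriate data into the axiom scheme $(\clubsuit)$. Let $I:=\{f\in K[\pmb{X}_1,\dots,\pmb{X}_e]\mid f(\overline{\sigma}(\bar b))=0\}$ be the kernel of the evaluation homomorphism $K[\pmb{X}_1,\dots,\pmb{X}_e]\to L$, $\pmb{X}_i\mapsto\sigma_i(\bar b)$; it is prime since $L$ is a domain. The crucial observation is that $I$ is $G$-invariant for the $G$-transformal ring structure on $K[\pmb{X}_1,\dots,\pmb{X}_e]$ defined before Definition \ref{def_axioms}: using $g_kg_l=g_{g_k\ast l}$ and that the $\sigma$'s are endomorphisms, one checks $\sigma_{g_k}(f)(\overline{\sigma}(\bar b))=\sigma_{g_k}\!\big(f(\overline{\sigma}(\bar b))\big)$ for every $f$, so $f\in I$ implies $\sigma_{g_k}(f)\in I$. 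Let $J$ be the ideal of $K[\pmb{X}_1,\dots,\pmb{X}_e]$ generated by $I$ together with $q$; since $q(\overline{\sigma}(\bar b))\neq 0$ we have $q\notin I$, hence $I\subsetneq J$, while each $p_i$ lies in $I$. Applying $(\clubsuit)$ inside the model $K$ to the pair $I\subsetneq J$ yields $a\in K^n$ with $\overline{\sigma}(a)\in V_K(I)\setminus V_K(J)$. Then $p_i(\overline{\sigma}(a))=0$ for all $i$, and writing an element $h\in J$ with $h(\overline{\sigma}(a))\neq 0$ as $h=f+qr$ with $f\in I$ forces $q(\overline{\sigma}(a))\neq 0$. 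Thus $a$ realizes $\phi$ in $K$, which is exactly what existential closedness requires, and model completeness follows by Robinson's test.

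I expect the argument to be essentially routine once $(\clubsuit)$ is available — the geometric axioms were tailored so that Robinson's test applies directly — and the only step calling for genuine care will be the verification that the vanishing ideal $I$ of $\overline{\sigma}(\bar b)$ is $G$-invariant in the sense of the definitions preceding Definition \ref{def_axioms}. This hinges on correctly matching the permutation action $g_k\ast(-)$ on the coordinate blocks $\pmb{X}_1,\dots,\pmb{X}_e$ with the group law of $G$, i.e. on the identity $\sigma_{g_k}(f)(\overline{\sigma}(\bar b))=\sigma_{g_k}(f(\overline{\sigma}(\bar b)))$. The remaining ingredients — the disjunctive-normal-form reduction, collapsing finitely many inequations into a single one, and reading off from $h=f+qr$ that $q$ does not vanish at the produced point — are standard bookkeeping.
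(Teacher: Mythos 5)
Your proposal is correct and follows essentially the same route as the paper: Robinson's test, reduction of a quantifier-free formula to the form $\bigwedge_i p_i(\overline{\sigma}(\bar x))=0\wedge q(\overline{\sigma}(\bar x))\neq 0$, taking $I$ to be the vanishing ideal of $\overline{\sigma}(\bar b)$ (prime and $G$-invariant by the same computation with $g_kg_l=g_{g_k\ast l}$), letting $J=(I\cup\{q\})$, and applying $(\clubsuit)$. Your extra bookkeeping (the DNF reduction and the $h=f+qr$ check that $q$ does not vanish at the produced point) is just a more explicit spelling-out of steps the paper leaves implicit.
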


\begin{proof}
We will check that each model of $G-\tcf$ is existentially closed (which amounts to Robinson's Test, see e.g. \cite[Lemma 3.2.7]{tezi}).
Let $(K,\overline{\sigma})\models G-\tcf$ and let $(L,\overline{\sigma})$ be a $G$-transformal field extending $K$.
Assume that for a quantifier free $\mathcal{L}_G$-formula $\varphi(x)$ with parameters from $K$, where $x=(x_1,\ldots,x_n)$,
there is a solution $a$ in $L$. Formula $\varphi(x)$ can be written as
\begin{IEEEeqnarray*}{rCl}
h_1(\sigma_{1}(x),\ldots,\sigma_{e}(x)) 
= \ldots &=& h_m(\sigma_{1}(x),\ldots,\sigma_{e}(x))=0 \\
&\wedge& g(\sigma_{1}(x),\ldots,\sigma_{e}(x))\neq 0,
\end{IEEEeqnarray*}
where $h_1,\ldots,h_m,g\in K[X_1,\ldots,X_{e}]$.

Let $I:=\lbrace f\in K[X_1,\ldots,X_e]\;|\; f(\sigma_1(a),\ldots,\sigma_e(a))=0\rbrace$. If $f\in I$, then for any $j\leqslant e$ we have
\begin{IEEEeqnarray*}{rCl}
\big(\sigma_j(f)\big)\big(\sigma_1(a),\ldots,\sigma_e(a)\big) &=&
f^{\sigma_j}(X_{j\ast 1},\ldots,X_{j\ast e})\big(\sigma_1(a),\ldots,\sigma_e(a)\big)\\
&=& f^{\sigma_j}\big(\sigma_j\sigma_1(a),\ldots,\sigma_j\sigma_e(a)\big) \\
&=& \sigma_j\Big(f\big(\sigma_1(a),\ldots,\sigma_e(a)\big)\Big)=0.
\end{IEEEeqnarray*}
Therefore $\sigma_j(f)\in I$ and $I$ is a $G$-invariant prime ideal.
Now let $J$ be generated as an ideal of $K[X_1,\ldots,X_{e}]$ by $I\cup\lbrace g\rbrace$. 
Because $a\in V_L(I)\setminus V_L(J)$, we have $I\subsetneq J$.
Hence all the assumptions of ($\clubsuit$) from the definition of $G-\tcf$ are satisfied, and therefore $\varphi(x)$ has a solution in $K$.
\end{proof}

\begin{theorem}
The theory $G-\tcf$ is a model companion of the theory of $G$-transformal fields.
\end{theorem}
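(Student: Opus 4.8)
The plan is to assemble the two preceding lemmas into the standard characterization of a model companion. Recall that a first-order theory $T^{*}$ is a \emph{model companion} of a first-order theory $T$ in the same language precisely when: (i) $T$ and $T^{*}$ are mutually model-consistent, i.e. every model of $T$ embeds into a model of $T^{*}$ and every model of $T^{*}$ embeds into a model of $T$; and (ii) $T^{*}$ is model complete. So it suffices to verify these two items with $T$ the theory of $G$-transformal fields and $T^{*}=G-\tcf$.

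For item (i): every model of $G-\tcf$ is, by Definition \ref{def_axioms}, in particular a $G$-transformal field, hence is (trivially, via the identity map) a substructure of a model of $T$; conversely, Lemma \ref{embeds} is exactly the assertion that every $G$-transformal field embeds into a model of $G-\tcf$. For item (ii): this is Lemma \ref{mod_complete}. One should also record, for completeness, that $G-\tcf$ really is a first-order $\mathcal{L}_G$-theory — it is the (finite) theory of $G$-transformal fields together with the scheme $(\clubsuit)$, which is first-order by Remark \ref{rem_axioms}(1) — and that it is consistent, which follows by applying Lemma \ref{embeds} to any $G$-transformal field whatsoever (for instance a field with the trivial $G$-action, or a Galois extension with Galois group $G$). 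Together these give that $G-\tcf$ is a model companion of the theory of $G$-transformal fields.

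I do not anticipate any genuine obstacle in this step: all the substance has already been carried out in Lemmas \ref{embeds} and \ref{mod_complete}, and the present theorem is essentially the bookkeeping that packages them into the definition of a model companion. If one wishes, one can append the standard consequences — that the model companion is unique when it exists, and that its models are exactly the existentially closed models of $T$ — so that $G-\tcf$ indeed axiomatizes the class of existentially closed $G$-transformal fields, as announced in the introduction.
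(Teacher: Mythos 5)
Your proof is correct and follows exactly the paper's route: the theorem is deduced by combining Lemma \ref{embeds} (every $G$-transformal field embeds into a model of $G-\tcf$) with Lemma \ref{mod_complete} (model completeness of $G-\tcf$). The extra remarks on first-orderness and consistency are fine but not needed beyond what is already established in Remark \ref{rem_axioms}.
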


\begin{proof}
By Lemma \ref{embeds} and Lemma \ref{mod_complete}.
\end{proof}

\begin{remark}
\begin{enumerate}
\item The theory $G-\tcf$ has no quantifier elimination (otherwise 
it would lead to stability, which is impossible by Corollary \ref{unstable}).

\item The reader may compare our axioms of the theory $G-\tcf$ with the axioms of the theory introduced in \cite[Definition 10.]{sjogren}, which share the same models (\cite[Theorem 14.]{sjogren}, Theorem \ref{equiv}).
Those axioms are given in terms of the properties of the absolute Galois groups.

\item Sj\"{o}gren proved in \cite[Theorem 13.]{sjogren} that the theory of existentially closed $G$-transformal fields considered in the language of $G$-transformal fields extended by a certain set of predicates has quantifier elimination.
\end{enumerate}
\end{remark}

\section{Algebraic properties of models of $G-\tcf$}\label{sec3}
In this section, we will give a field-theoretic characterization of existentially closed $G$-transformal fields.
\subsection{Basic properties}
We consider first some obvious properties of existentially closed $G$-transformal fields.
\begin{lemma}\label{perfect}
 For every $G$-transformal field $(K,\overline{\sigma})$, there is a unique $G$-transformal structure on the perfect closure of $K$,
 $K^{\perf}=\bigcup\limits_{n>0}\lbrace a\in K^{\alg}\;|\;a^{p^n}\in K\rbrace$, extending the one on $K$.
\end{lemma}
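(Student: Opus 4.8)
The plan is to show that each $\sigma_k$ extends uniquely to $K^{\perp}$, well — to $K^{\perf}$, and that the extended maps still form a $G$-action. The key point is that in characteristic $p$, the Frobenius map is injective, so each $\sigma_k \in \aut(K)$ has a canonical ``$p$-th root'' behaviour on the perfect closure. Concretely, if $a \in K^{\perf}$ with $a^{p^n} \in K$, I would \emph{define} $\tilde{\sigma}_k(a)$ to be the unique element of $K^{\alg}$ whose $p^n$-th power equals $\sigma_k(a^{p^n})$; uniqueness of $p^n$-th roots in a field of characteristic $p$ makes this well-defined, and one checks it is independent of the choice of $n$ (replacing $n$ by $n+1$ raises both sides to the $p$-th power). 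If $\ch K = 0$ there is nothing to do since $K^{\perf} = K$.

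The steps, in order: (1) dispose of the characteristic $0$ case trivially. (2) In characteristic $p$, define $\tilde{\sigma}_k$ as above and verify it is well-defined on $K^{\perf}$ (independence of $n$, and that the value lands in $K^{\perf}$ since $\tilde{\sigma}_k(a)^{p^n} = \sigma_k(a^{p^n}) \in K$). (3) Check $\tilde{\sigma}_k$ is a ring endomorphism: for $a, b \in K^{\perf}$ pick a common $n$ with $a^{p^n}, b^{p^n} \in K$, then $(a+b)^{p^n} = a^{p^n} + b^{p^n}$ by the Frobenius identity and similarly for products, so applying $\sigma_k$ and taking $p^n$-th roots gives additivity and multiplicativity; it fixes $0$ and $1$. (4) Check $\tilde{\sigma}_k$ is an automorphism of $K^{\perf}$: it is injective (a ring hom on a field), and surjective because $\tilde{\sigma}_{k^{-1}}$ — the extension of $\sigma_k^{-1}$, which exists since $G$ is a group — is an inverse; indeed $\widetilde{\sigma_k^{-1}} \circ \tilde{\sigma}_k$ and $\tilde{\sigma}_k \circ \widetilde{\sigma_k^{-1}}$ both restrict to the identity on $K$ and are determined by that restriction via uniqueness of roots, hence equal $\id$. (5) Check $g_k \mapsto \tilde{\sigma}_k$ is a group homomorphism: $\tilde{\sigma}_k \circ \tilde{\sigma}_l$ and $\widetilde{\sigma_k \sigma_l} = \widetilde{\sigma_{g_k g_l}}$ agree on $K$ and hence everywhere, by the same uniqueness argument. (6) Uniqueness of the whole extended structure: any $G$-transformal structure on $K^{\perf}$ extending the one on $K$ must, for $a$ with $a^{p^n} \in K$, satisfy $\sigma_k(a)^{p^n} = \sigma_k(a^{p^n})$, which pins down $\sigma_k(a)$ uniquely.

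The recurring mechanism is always the same: two field endomorphisms of $K^{\perf}$ that agree on $K$ must be equal, because every element of $K^{\perf}$ has a power lying in $K$ and $p^n$-th roots are unique. Once this ``rigidity'' observation is isolated, all of well-definedness, the homomorphism property, and uniqueness follow formally. I do not expect a genuine obstacle here — the only mild care needed is bookkeeping the choice of exponent $n$ when combining two elements, i.e. always passing to a common large enough $n$ so that the relevant $p^n$-th powers land in $K$ simultaneously; this is routine.
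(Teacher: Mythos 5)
Your proposal is correct and follows essentially the same route as the paper: the paper defines the extension on each $K_n=\{a\in K^{\alg} : a^{p^n}\in K\}$ by $\sigma_k^n=\fr^{(-n)}\circ\sigma_k\circ\fr^{(n)}$, which is exactly your ``unique $p^n$-th root of $\sigma_k(a^{p^n})$'' construction, and then checks compatibility as $n$ varies. The remaining verifications (homomorphism, automorphism, group action, uniqueness) that you spell out via the rigidity of $p^n$-th roots are left implicit in the paper but are exactly as you describe.
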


\begin{proof}
We can assume that $\ch(K)=p>0$.
 Let $K_n:=\lbrace a\in K^{\alg}\;\;|\;\;a^{p^n}\in K\rbrace$, we start with defining a $G$-transformal field structure $\overline{\sigma}^n$ on each $K_n$,
 $$\overline{\sigma}^n=(\sigma_k^n:=\fr^{(-n)}\circ\sigma_k\circ\fr^{(n)})_{k\leqslant e}.$$
 It is enough to show that $\sigma_k^{n+1}\upharpoonright_{K_n}=\sigma_k^n$ for each $k\leqslant e$.
 Assume that $a^{p^n}=b\in K$, then
 \begin{equation*} 
 \sigma_k^{n+1}(a)=\fr^{(-n-1)}(\sigma_k(b^p))=\fr^{(-n)}(\sigma_k(b))=\sigma_k^n(a). \qedhere
  \end{equation*}
\end{proof}
By the lemma above, every existentially closed
$G$-transformal field is perfect.

\begin{lemma}
 If a $G$-transformal field $(K,\overline{\sigma})$ is perfect, then $K^G$
 is perfect as well.
\end{lemma}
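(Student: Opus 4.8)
The plan is to show that $K^G$ is closed under taking $p$-th roots, where $p = \ch(K) > 0$ (the case $\ch(K) = 0$ being trivial since then $K^G$ is automatically perfect). So take $a \in K^G$; since $K$ is perfect, there is a unique $b \in K$ with $b^p = a$, and we must check $b \in K^G$, i.e. that $\sigma_k(b) = b$ for every $k \leqslant e$.

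First I would apply $\sigma_k$ to the equation $b^p = a$. Since each $\sigma_k$ is a ring homomorphism, we get $\sigma_k(b)^p = \sigma_k(b^p) = \sigma_k(a) = a$, the last equality because $a \in K^G$. Thus both $\sigma_k(b)$ and $b$ are $p$-th roots of $a$ in $K$. The key point is uniqueness of $p$-th roots in characteristic $p$: if $x^p = y^p$ then $(x-y)^p = x^p - y^p = 0$ (using the Frobenius/Freshman's dream in characteristic $p$), hence $x = y$ since $K$ is a field (and in particular reduced). Applying this with $x = \sigma_k(b)$ and $y = b$ gives $\sigma_k(b) = b$. As $k$ was arbitrary, $b \in K^G$, which shows $K^G$ is perfect.

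I do not expect any real obstacle here; the only thing to be a little careful about is that the argument genuinely uses that $K$ itself is perfect (to produce the $p$-th root $b$ inside $K$ in the first place) and that $K$ is a field (so that $x^p = y^p$ forces $x = y$). Alternatively, one can phrase the same computation via the structure maps from Lemma \ref{perfect}: the perfect closure of $K$ equals $K$ itself, and the uniqueness of the $G$-transformal structure on $K^{\perf}$ from that lemma, combined with the fact that $\fr$ commutes with each $\sigma_k$ on a perfect field, immediately gives that $\fr^{-1}$ preserves $K^G$; but the direct one-line computation above is cleaner and self-contained.
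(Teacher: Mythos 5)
Your argument is correct and is essentially the same as the paper's: both take a $p$-th (or $p^n$-th) root of a constant, apply $\sigma_k$, and use injectivity of the Frobenius on a field to conclude the root is fixed by $G$. No issues.
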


\begin{proof}
 If $a^{p^n}=b\in K^G$ for some $n>0$, then for every $k\leqslant e$, we have
 $$\sigma_k(a)^{p^n}=\sigma_k(b)=b=a^{p^n}.$$
 Since the Frobenius map is injective, we conlude that $\sigma_k(a)=a$, so $a\in K^G$.
\end{proof}
We see that the invariants of
every perfect $G$-transformal field $(K,\overline{\sigma})$ are also perfect and therefore 
the field extension $K^G\subseteq K$
is separable of degree $e$, so (by the primitive element theorem) it is a simple extension.

\begin{cor}\label{cor33}
If $(K,\overline{\sigma})\models G-\tcf$, then $K$ and $K^G$ are perfect.
\end{cor}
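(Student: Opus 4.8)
The statement to prove is Corollary \ref{cor33}: if $(K,\overline{\sigma})\models G-\tcf$, then both $K$ and $K^G$ are perfect. The plan is to simply chain together the three preceding lemmas, since all the real work has already been done.

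First I would argue that $K$ is perfect. By Lemma \ref{embeds}, every $G$-transformal field embeds in a model of $G-\tcf$; more to the point, Lemma \ref{perfect} produces, for any $G$-transformal field, a canonical $G$-transformal structure on its perfect closure $K^{\perf}$ extending the given one. Thus $(K,\overline{\sigma})\subseteq (K^{\perf},\overline{\sigma}^{\perf})$ is a $G$-transformal extension. Since $G-\tcf$ is model complete (Lemma \ref{mod_complete}), $(K,\overline{\sigma})$ is existentially closed among $G$-transformal fields; alternatively, we may re-embed $(K^{\perf},\overline{\sigma}^{\perf})$ into a model of $G-\tcf$ by Lemma \ref{embeds}. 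In characteristic $p>0$, for each $a\in K$ the equation $x^p=a$ has a solution in $K^{\perf}$, hence (being a quantifier-free $\mathcal{L}_G$-condition with a parameter from $K$, solvable in an extension) it has a solution in $K$ by existential closedness. Therefore the Frobenius is surjective on $K$, i.e. $K$ is perfect. (In characteristic $0$ there is nothing to prove.) This is exactly the one-line remark ``every existentially closed $G$-transformal field is perfect'' stated right after Lemma \ref{perfect}.

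Next, $K^G$ is perfect: this is immediate from the lemma stating that if a $G$-transformal field $(K,\overline{\sigma})$ is perfect then $K^G$ is perfect as well. We have just shown $K$ is perfect, so the hypothesis of that lemma is met, and we conclude $K^G$ is perfect.

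I do not anticipate a genuine obstacle here — the corollary is a bookkeeping consequence of the material already developed. The only point requiring a moment's care is the first step: one must observe that ``$K$ is perfect'' can be phrased as satisfaction of the quantifier-free formulas $\exists x\, (x^p = a)$ for $a\in K$, so that existential closedness (equivalently, model completeness of $G-\tcf$ via Lemma \ref{mod_complete}) applies to the extension $(K,\overline{\sigma})\subseteq(K^{\perf},\overline{\sigma}^{\perf})$ furnished by Lemma \ref{perfect}. Once that is noted, both halves follow directly.
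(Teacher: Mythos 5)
Your proposal is correct and follows essentially the same route as the paper: the paper treats the corollary as an immediate consequence of Lemma \ref{perfect} (via the remark that every existentially closed $G$-transformal field is perfect, which is exactly your $x^p=a$ argument combined with Lemmas \ref{embeds} and \ref{mod_complete}) together with the lemma that the constants of a perfect $G$-transformal field are perfect. Nothing further is needed.
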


\begin{lemma}\label{strict}
Any model $(K,\overline{\sigma})$ of $G-\tcf$ is strict.
\end{lemma}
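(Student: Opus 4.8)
The plan is to show that the action $g_k\mapsto\sigma_k$ of $G$ on $K$ is \emph{faithful}; everything else is then classical field theory. Indeed, writing $H\leqslant\aut(K)$ for the image of $G$ in $\aut(K)$, we have $K^G=K^H$, and Artin's theorem says that $K/K^H$ is a finite Galois extension with $[K:K^H]=|H|$; hence $[K:K^G]=|H|\leqslant e$, with equality precisely when $G\to\aut(K)$ is injective. So the whole statement reduces to deriving faithfulness from the axiom scheme $(\clubsuit)$.

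To obtain faithfulness I would apply $(\clubsuit)$ in the one-variable case $n=1$, where $K[\pmb{X}_1,\ldots,\pmb{X}_e]=K[X_1,\ldots,X_e]$, taking $I:=(0)$ and $J:=(f)$ with $f:=\prod_{1\leqslant i<j\leqslant e}(X_i-X_j)$. The ideal $I=(0)$ is prime, since the polynomial ring over a field is a domain, and trivially $G$-invariant; moreover $I\subsetneq J$ because $f\neq 0$ in that domain, so all hypotheses of $(\clubsuit)$ are met (if $e=1$ the claim is trivial). The axiom then produces $a\in K$ with $\overline{\sigma}(a)=(\sigma_1(a),\ldots,\sigma_e(a))\in V_K(I)\setminus V_K(J)=K^e\setminus V_K(J)$, that is $f\big(\sigma_1(a),\ldots,\sigma_e(a)\big)\neq 0$, which means exactly that $\sigma_1(a),\ldots,\sigma_e(a)$ are pairwise distinct elements of $K$. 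Consequently $\sigma_i\neq\sigma_j$ as automorphisms of $K$ whenever $i\neq j$, so $G\to\aut(K)$ is injective and, by the first paragraph, $[K:K^G]=e$, i.e. $(K,\overline{\sigma})$ is strict.

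I do not expect a genuine obstacle here: the only points needing (routine) care are checking that $(0)$ really does satisfy the primality, $G$-invariance and proper-containment hypotheses of $(\clubsuit)$ so that the scheme applies, and invoking the classical fact that a faithful finite group action on a field realises the fixed-field index equal to the order of the group. If one prefers to avoid the discriminant-type polynomial $f$, the same conclusion follows by applying $(\clubsuit)$ once for each $k$ with $g_k\neq 1$, with $I=(0)$ and $J=(X_1-X_k)$, which yields $a_k\in K$ with $\sigma_k(a_k)\neq a_k$, hence $\sigma_k\neq\id$; since for $m\neq l$ one has $\sigma_m\sigma_l^{-1}=\sigma_{m'}$ with $g_{m'}=g_mg_l^{-1}\neq 1$, this again forces all the $\sigma_i$ to be distinct.
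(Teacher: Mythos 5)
Your proof is correct. It reaches the same underlying fact as the paper — that in an existentially closed $G$-transformal field some element is moved differently by all the $\sigma_i$, after which Artin's lemma finishes the job — but by a different route: you apply the axiom scheme $(\clubsuit)$ directly, with $n=1$, $I=(0)$ and $J=(f)$ for the discriminant-type polynomial $f=\prod_{i<j}(X_i-X_j)$, and all the hypotheses ($(0)$ prime, $G$-invariant, properly contained in $(f)$) do check out. The paper instead extends the $G$-action to the rational function field $K(X_1,\ldots,X_e)$ (where the action permutes the variables and is visibly faithful, hence the extension is strict) and then invokes model completeness (Lemma \ref{mod_complete}) to pull strictness back to $K$. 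The two arguments are nearly dual to one another: the ideal $I=(0)$ is the generic point of $\Aa^{e}$, and the corresponding extension built in Case 2 of the proof of Lemma \ref{embeds} is exactly the function field $K(X_1,\ldots,X_e)$ that the paper uses; your version unwinds the appeal to model completeness into a single explicit instance of the axioms. What your approach buys is independence from Lemma \ref{mod_complete} (you use only the definition of $G-\tcf$), at the cost of having to verify the hypotheses of $(\clubsuit)$ by hand; the paper's version is shorter but needs strictness to be recognized as an existential condition (which your first paragraph, via faithfulness and Artin, in effect also supplies). Both are sound.
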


\begin{proof}
We can easily extend the $G$-transformal field structure to $K(X_1,\ldots,X_e)$, which will be strict. Hence, by model completeness, $(K,\overline{\sigma})$ must be strict.
\end{proof}

Let us consider now a property 
which is \emph{not} satisfied by the models of $G-\tcf$. We start with an example.

\begin{example}\label{cpxconj}
We analyze a special $\Zz/2\Zz$-difference field $(\Cc,\sigma)$,
where $\sigma$ is the complex conjugation.
It is not existentially closed since the difference equation
$$x\sigma(x)=-1$$
has no solutions in $(\Cc,\sigma)$, but it has a solution (i.e. $X$) in the following extension
$$(\Cc(X),\sigma),\ \ \ \sigma(X)=-1/X.$$
\end{example}

\begin{theorem}\label{non-sep}
Suppose that $G$ is non-trivial and $(K,\overline{\sigma})\models G-\tcf$. Then $K$ is not separably closed.
\end{theorem}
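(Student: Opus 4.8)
The plan is to derive a contradiction from the assumption that $K$ is separably closed by showing that separably closedness is incompatible with being a strict $G$-transformal field when $G$ is non-trivial. First I would recall that by Corollary \ref{cor33} and Lemma \ref{strict}, a model $(K,\overline{\sigma})$ of $G-\tcf$ is perfect and strict, so $[K:K^G]=e=|G|$ and $K^G\subseteq K$ is a Galois extension with Galois group $G$ (via the given isomorphism $G\to\aut(K)$ landing in $\aut(K/K^G)$, which is onto by the degree count). Now if $K$ were separably closed, then since $K$ is perfect it would be algebraically closed; but an algebraically closed field has no proper finite (Galois) extensions, forcing $e=1$, i.e. $G$ trivial — contradiction. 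So at this level the statement looks almost immediate, and the only subtlety is justifying that the $G$-action does realize $\gal(K/K^G)$, which follows from strictness together with the fact that $G\hookrightarrow\aut(K)$ fixes $K^G$ pointwise, hence the image sits inside $\aut(K/K^G)$, a group of order $\leqslant [K:K^G]=e$, forcing equality.

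However, I suspect the authors intend a sharper statement or a proof that does not secretly invoke ``perfect $+$ separably closed $\Rightarrow$ algebraically closed'' in a way that trivializes everything — more likely the point is to prepare for the later characterization of $K$ as a (bounded) PAC field, and the proof exhibits an explicit separable equation without a solution, generalizing Example \ref{cpxconj}. So the alternative plan I would actually carry out is: pick $1\neq g\in G$ and work inside the subfield fixed by a complement, reducing to the case where $G=\langle g\rangle$ is cyclic of prime order $\ell$ (by passing to $\fix(H)$ for $H$ a maximal subgroup, which is again a strict $\langle g\rangle$-transformal field — one must check this descent is legitimate, i.e. that $(\fix(H),\sigma_g|_{\fix(H)})$ still embeds in / is a model of the relevant theory, or just argue model-theoretically that a witnessing equation over $K$ exists because it exists over some extension). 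Then I would write down the analogue of $x\sigma_g(x)\cdots\sigma_g^{\ell-1}(x)=c$ — a norm equation — for a suitable $c\in K^G$, and show that solvability of such an equation in a $G-\tcf$ model is obstructed: if every such norm equation were solvable for every $c$, the norm map $N\colon K^\times\to (K^G)^\times$ would be surjective and in fact one could derive that $K^G$ is algebraically closed (using that $K^G$ is PAC and Hilbert-90-type constraints), again collapsing $G$.

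The main obstacle, then, is organizing the reduction to the cyclic (prime order) case cleanly, and choosing the right Kummer/Artin–Schreier-type equation: in characteristic $0$ or $p\nmid\ell$ a Kummer equation $x^\ell=$ something coupled with the $\sigma_g$-action, and in characteristic $p=\ell$ an Artin–Schreier equation $\sigma_g(x)-x=c$ or rather the twisted version adapted to the constant field. I would handle this by invoking model completeness (Lemma \ref{mod_complete}): it suffices to produce, over $K$, a $G$-transformal field extension containing a separable element generating a proper extension whose $G$-orbit structure is forced, exactly as in Example \ref{cpxconj} where $(\Cc(X),\sigma)$ with $\sigma(X)=-1/X$ gives the obstruction; the general construction takes $K(X_1,\ldots,X_e)$ with the $G$-action permuting the $X_i$ according to $g_k\ast l$, localizes at an appropriate prime, and reads off an element algebraic and separable over $K$ of degree $>1$ lying in the model $K$ — contradicting that no such element can exist if $K$ is separably closed. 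The genuinely nontrivial input is verifying that the constructed ideal is prime and $G$-invariant so that axiom $(\clubsuit)$ applies, which is the same kind of bookkeeping as in the proof of Lemma \ref{embeds}.
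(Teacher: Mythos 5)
There is a genuine gap, and it occurs already in your ``almost immediate'' first argument. You write that if $K$ were separably closed, hence algebraically closed (being perfect), then ``an algebraically closed field has no proper finite (Galois) extensions, forcing $e=1$.'' This confuses the direction of the extension: the relevant extension is $K^G\subseteq K$ with $K$ on \emph{top}, and algebraic closedness of $K$ says nothing about whether $K$ can sit as a proper finite extension \emph{over} a subfield. The standard counterexample is exactly $\Cc/\Rr$: $\Cc$ is algebraically closed, $[\Cc:\Rr]=2$, and $\gal(\Cc/\Rr)\cong\Zz/2\Zz$. So $e=1$ does not follow, and in fact the entire content of the theorem is to rule out precisely this $\Cc/\Rr$-type configuration, which cannot be done by pure field theory --- one needs a difference-equation obstruction. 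The missing structural input is the Artin--Schreier theorem: a separably closed field that is a proper finite extension of a subfield $F$ forces $\ch(F)=0$, $F$ real closed, $K=F(i)$ and $[K:F]=2$. That is what the paper invokes (via \cite[Theorem 11.14]{njac} adapted to separably closed fields), concluding $G=\Zz/2\Zz$, and then Example \ref{cpxconj} finishes: the norm equation $x\sigma(x)=-1$ has no solution in $K=F(i)$ because $N_{K/F}(a+bi)=a^2+b^2\geqslant 0$ in the real closed field $F$, yet it has a solution in the $G$-transformal extension $(K(X),\sigma)$ with $\sigma(X)=-1/X$, contradicting existential closedness (Lemma \ref{mod_complete}).

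Your alternative plan B gestures toward the right kind of obstruction (a norm equation) but does not close the gap. First, the reduction to a cyclic group of prime order is both unnecessary and problematic: a maximal subgroup $H$ need not be normal, so $\fix(H)$ need not be stable under $\sigma_g$, and in any case the intermediate field is not separably closed, so the hypothesis is not inherited. Second, you never identify which $c$ obstructs the norm equation or why; the assertion that surjectivity of the norm map together with ``Hilbert-90-type constraints'' would force $K^G$ to be algebraically closed is unsubstantiated and is not how the obstruction works --- the point is rather that after Artin--Schreier the norm map visibly \emph{misses} the negative elements of the real closed field $K^G$, so $c=-1$ is the witness. Without the Artin--Schreier input pinning down the situation to characteristic $0$, degree $2$, and a real closed constant field, neither the choice of equation nor the non-solvability claim is justified.
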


\begin{proof}
Suppose that $K$ is separably closed and let $F=K^G$. 
After modifying the proof of \cite[Theorem 11.14]{njac}
to the more general case of a separably closed field
(like in \cite[Theorem 4.1]{conrad}), 
we get that $\ch(F)=0$, $F$ is real closed and $K=F(i)$ 
(see also Corollary 9.3 on page 299 in
\cite{lang2002algebra}). Hence $G=\Zz/2\Zz$ and Example \ref{cpxconj} (working also in this case) gives the contradiction.
\end{proof}

\subsection{Algebraic extensions and $G$-closed fields}\label{subsec_alg_ext}
Given a perfect field $C$, its algebraic closure $C^{\alg}$ and a normal closed subgroup $\mathcal{N}\trianglelefteqslant\mathcal{G}:=
\gal(C^{\alg}/C)$ such that $\mathcal{G}/\mathcal{N}\cong G$, we consider the field $K:=(C^{\alg})^{\mathcal{N}}$.
Since $C\subseteq C^{\alg}$ is a Galois extension and $\mathcal{N}$ is normal, we have the following:
\begin{itemize}
\item $\mathcal{N}=\gal(C^{\alg}/K)$,
\item the extension $C\subseteq K$ is Galois,
\item $\gal(K/C)\cong\faktor{\mathcal{G}}{\mathcal{N}}\cong G$ and
\item $C=K^{\gal(K/C)}=K^G$.
\end{itemize} 

We keep the above set-up in this subsection.

\begin{lemma}\label{alg_ext}
The following conditions are equivalent.
\begin{enumerate}
\item There is a non-trivial algebraic field extension $K\subseteq K'$ such that the action of $G$ on $K$ extends to an action of $G$ on $K'$.

\item There is a proper closed subgroup $\mathcal{G}_0<\mathcal{G}$ such that $\mathcal{G}_0\mathcal{N}=\mathcal{G}$.
\end{enumerate}
\end{lemma}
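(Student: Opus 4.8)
The plan is to translate the field-theoretic statement into Galois-theoretic language using the fixed-field correspondence for the profinite group $\mathcal{G} = \gal(C^{\alg}/C)$, and then to pass between subgroups of $\mathcal{G}$ and algebraic extensions of $K = (C^{\alg})^{\mathcal{N}}$ that carry a compatible $G$-action. Throughout, recall that $G \cong \mathcal{G}/\mathcal{N}$ acts on $K$ via $\gal(K/C)$, and that an algebraic extension $K \subseteq K'$ inside $C^{\alg}$ corresponds to a closed subgroup $\mathcal{N}' = \gal(C^{\alg}/K') \leqslant \mathcal{N}$, with $[K':K]$ finite (or not) matching the index $[\mathcal{N}:\mathcal{N}']$.

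First I would prove (2) $\Rightarrow$ (1). Given a proper closed $\mathcal{G}_0 < \mathcal{G}$ with $\mathcal{G}_0 \mathcal{N} = \mathcal{G}$, set $K' := (C^{\alg})^{\mathcal{G}_0 \cap \mathcal{N}}$. Since $\mathcal{G}_0 \cap \mathcal{N} \leqslant \mathcal{N} = \gal(C^{\alg}/K)$, we have $K \subseteq K'$, and this extension is non-trivial because $\mathcal{G}_0 \cap \mathcal{N} \neq \mathcal{N}$: indeed if $\mathcal{G}_0 \cap \mathcal{N} = \mathcal{N}$ then $\mathcal{N} \leqslant \mathcal{G}_0$, so $\mathcal{G}_0 = \mathcal{G}_0 \mathcal{N} = \mathcal{G}$, contradicting properness. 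To extend the $G$-action to $K'$: by the second isomorphism theorem the natural map $\mathcal{G}_0/(\mathcal{G}_0\cap\mathcal{N}) \to \mathcal{G}/\mathcal{N} \cong G$ is an isomorphism, so $\mathcal{G}_0$ acts on $C^{\alg}$ and hence on the $\mathcal{G}_0$-stable (because $\mathcal{G}_0\cap\mathcal{N}$ is normal in $\mathcal{G}_0$) field $K'$, and this action factors through $G$; one checks it restricts on $K$ to the original $G$-action because $\mathcal{G}_0\to G$ and $\mathcal{G}\to G$ agree on the common quotient. The point to verify carefully is the normality of $\mathcal{G}_0\cap\mathcal{N}$ in $\mathcal{G}_0$, which is immediate since $\mathcal{N}\trianglelefteqslant\mathcal{G}$.

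For (1) $\Rightarrow$ (2), take a non-trivial algebraic $K \subseteq K'$ with a compatible $G$-action. We may assume $K'$ is contained in a suitable copy of $C^{\alg}$, and after possibly enlarging $K'$ to its Galois closure over $C$ (which again carries a $G$-action, as $K\subseteq C$ is normal) assume $C \subseteq K'$ is Galois; let $\mathcal{N}' = \gal(C^{\alg}/K') \trianglelefteqslant \mathcal{G}$, so $\mathcal{N}' \leqslant \mathcal{N}$ properly and $\mathcal{N}'$ is normal in $\mathcal{G}$. The $G$-action on $K'$ means $\gal(K'/C)$ surjects onto $\gal(K/C)\cong G$ — but that is automatic and does not yet give (2). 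The real content is that the given $G$-action on $K'$ must be realized by a section of $\mathcal{G}\twoheadrightarrow\mathcal{G}/\mathcal{N}'$ whose image together with $\mathcal{N}$ generates $\mathcal{G}$. I would argue: an action of $G$ on $K'$ extending the one on $K$ amounts to a group homomorphism $s\colon G \to \gal(K'/C) = \mathcal{G}/\mathcal{N}'$ splitting the surjection $\mathcal{G}/\mathcal{N}' \twoheadrightarrow \mathcal{G}/\mathcal{N} \cong G$; pulling back, let $\mathcal{G}_0$ be the preimage in $\mathcal{G}$ of $s(G) \leqslant \mathcal{G}/\mathcal{N}'$. Then $\mathcal{G}_0 \supseteq \mathcal{N}'$, $\mathcal{G}_0/\mathcal{N}' = s(G)$ maps isomorphically onto $\mathcal{G}/\mathcal{N}$, which forces $\mathcal{G}_0\mathcal{N} = \mathcal{G}$ and $\mathcal{G}_0 \cap \mathcal{N} = \mathcal{N}'$; and $\mathcal{G}_0$ is proper since $\mathcal{N}' \subsetneq \mathcal{N}$ gives $\mathcal{G}_0 \neq \mathcal{G}$ (equivalently $s$ is not the "trivial" section, which would only exist when $K'=K$).

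The main obstacle I anticipate is the bookkeeping in (1) $\Rightarrow$ (2): making precise the equivalence between "the $G$-action on $K$ extends to $K'$" and "the surjection $\gal(K'/C)\twoheadrightarrow\gal(K/C)$ admits a splitting compatible with the identification $\gal(K/C)\cong G$", and then checking that the pulled-back subgroup $\mathcal{G}_0$ is genuinely proper rather than all of $\mathcal{G}$. One must be careful that "extends the action of $G$" is interpreted as: the homomorphism $G\to\aut(K')$ restricts to the given $G\to\aut(K)$ under $\aut(K')\to\aut(K)$ (the latter being restriction, well-defined since $K$ is normal over $C$ hence over the prime field appropriately). Once that dictionary is set up, both implications are essentially the second isomorphism theorem applied to $\mathcal{G}_0$, $\mathcal{N}$ inside $\mathcal{G}$; the continuity/closedness conditions are harmless because all the subgroups in play are closed (fixed fields correspond to closed subgroups, and $s(G)$ is finite hence closed).
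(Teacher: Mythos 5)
Your proof of $(2)\Rightarrow(1)$ is correct and is essentially the paper's argument: the paper takes $C':=(C^{\alg})^{\mathcal{G}_0}$ and $K':=(C^{\alg})^{\mathcal{G}_0\cap\mathcal{N}}$ and observes that the restriction map $\gal(K'/C')\to\gal(K/C)$ is bijective, which is exactly your second-isomorphism-theorem computation $\mathcal{G}_0/(\mathcal{G}_0\cap\mathcal{N})\cong\mathcal{G}/\mathcal{N}$; your check that $K'\neq K$ is a worthwhile addition.

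The direction $(1)\Rightarrow(2)$ has a genuine gap: the reduction to the case where $K'$ is Galois over $C$. You assert that the Galois closure of $K'$ over $C$ ``again carries a $G$-action,'' but this is exactly the delicate extendability phenomenon the lemma is analyzing, and normality of $K$ over $C$ does not yield it (compare Example \ref{ex311}, where a $\Zz/2\Zz$-action on $\Qq(\zeta_8)$ extending the one on $\Qq(\zeta_8+\zeta_8^5)$ fails to extend to the larger Galois extension $\Qq(\zeta_{16})$ of $\Qq$). Concretely, with $C':=(K')^G$ one has $K'=KC'$, the Galois closure of $K'$ over $C$ is $KD$ with $D$ the Galois closure of $C'$ over $C$, and extending the action to $KD$ amounts to splitting $\gal(KD/C)\twoheadrightarrow\gal(K/C)$ --- a new condition not implied by the existence of the action on $KC'$ (note $K\cap C'=C$ does not force $K\cap D=C$). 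Your subsequent argument genuinely uses the reduction, since you need $\mathcal{N}'=\gal(C^{\alg}/K')$ normal in $\mathcal{G}$ to write $\gal(K'/C)=\mathcal{G}/\mathcal{N}'$ and to speak of a splitting $s$ of $\mathcal{G}/\mathcal{N}'\twoheadrightarrow\mathcal{G}/\mathcal{N}$. The repair is to avoid normalizing over $C$ altogether, which is the paper's route: set $C':=(K')^G$ and $\mathcal{G}_0:=\gal(C^{\alg}/C')$. By Artin's theorem $K'/C'$ is automatically Galois with group $G$; since the $G$-action on $K'$ restricts to the given one on $K$, we get $K\cap C'=K^G=C$, hence the image of $\mathcal{G}_0$ under restriction to $K$ is $\gal(K/(K\cap C'))=\gal(K/C)$, i.e.\ $\mathcal{G}_0\mathcal{N}=\mathcal{G}$; and $C'\neq C$ (because $[K':C']=e=[K:C]$ while $K'\supsetneq K$) gives $\mathcal{G}_0\neq\mathcal{G}$. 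Your splitting picture then applies verbatim with $\mathcal{G}/\mathcal{N}'$ replaced by $\gal(K'/C')$. (Alternatively, one can keep your $\mathcal{N}'$ without assuming it normal by working with $\aut(K'/C)\cong N_{\mathcal{G}}(\mathcal{N}')/\mathcal{N}'$, but the $C'$ route is cleaner.)
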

\begin{proof}
$(1)\Rightarrow (2)$ Let $C':=(K')^G$ and $\mathcal{G}_0:=\gal(C^{\alg}/C')$. 
From the assumption, the restriction map $\alpha$
in the commutative diagram below
\begin{equation*}
\xymatrix{  \gal(C^{\alg}/C) \ar[r]^{\pi}  & \gal(K/C)\\
 \gal(C^{\alg}/C') \ar[r]^{\pi'}\ar[u]^{\leqslant} & \gal(K'/C') \ar[u]^{\alpha}
}
\end{equation*}
is an isomorphism. Therefore the composition of the following maps 
$$\mathcal{G}_0\xrightarrow{\subseteq}\mathcal{G}\xrightarrow{\pi}
\gal(K/C)\cong\mathcal{G}/\mathcal{N}$$
is onto. Therefore $\pi(\mathcal{G}_0)=\mathcal{G}_0\mathcal{N}/\mathcal{N}=\mathcal{G}/\mathcal{N}$
and $\mathcal{G}_0\mathcal{N}=\mathcal{G}$. Since the extension $C'\subseteq C^{\alg}$ is Galois, $\mathcal{G}_0$ is a closed subgroup of $\mathcal{G}$.
\\
$(2)\Rightarrow (1)$ Take $C':=(C^{\alg})^{\mathcal{G}_0}$ and $K':=(C^{\alg})^{\mathcal{G}_0\cap \mathcal{N}}$.
Since $K=(C^{\alg})^{\mathcal{N}}$, the fundamental theorem of Galois theory tells us
that $K'=KC'$ (the intersection of closed subgroups corresponds to the compositum of fields) and the restriction map
$$\gal(K'/C')\xrightarrow{\alpha}\gal(K/C)$$
is one-to-one.
From the assumption $\mathcal{G}_0\mathcal{N}=\mathcal{G}$, the map $\alpha$ is also onto, so the result follows.
\end{proof}

\begin{remark}
It is clear that the negation of the condition \ref{alg_ext}(2) means that the natural map $\pi:\mathcal{G}\to G$ (the restriction map $\gal(C^{\alg}/C)\to\gal(K/C)$) is a \emph{Frattini cover}, i.e. if $\mathcal{H}$ is a closed subgroup of $\mathcal{G}$ s.t. $\pi(\mathcal{H})=G$, then $\mathcal{H}=\mathcal{G}$. 
\end{remark}

The following lemma is the field counter part of Lemma 22.5.6 of \cite{FrJa}.
\begin{lemma}\label{acl}
There is a maximal field extension $K\subseteq K'$ (inside $C^{\alg}$) such that the action of $G$ on $K$ extends to $K'$.
\end{lemma}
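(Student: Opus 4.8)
The plan is to produce $K'$ by a Zorn's Lemma argument applied to the poset of intermediate $G$-extensions, the only subtlety being that we must phrase the poset correctly so that the union of a chain again carries a compatible $G$-action. Concretely, I would consider the set $\mathcal{P}$ of all subfields $L$ with $K\subseteq L\subseteq C^{\alg}$ together with an action of $G$ on $L$ (by field automorphisms) that restricts to the given action on $K$; order $\mathcal{P}$ by declaring $(L_1,\text{action})\leqslant (L_2,\text{action})$ iff $L_1\subseteq L_2$ and the $G$-action on $L_2$ restricts to the one on $L_1$. The set $\mathcal{P}$ is nonempty since $(K,\overline{\sigma})\in\mathcal{P}$.

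Next I would verify the chain hypothesis of Zorn's Lemma. Given a chain $(L_\iota)_{\iota\in I}$ in $\mathcal{P}$, put $L:=\bigcup_{\iota}L_\iota$; this is a subfield of $C^{\alg}$ containing $K$, and for each $g\in G$ the union of the automorphisms $\sigma_g^{L_\iota}$ is a well-defined automorphism of $L$ (well-definedness and the fact that it is a field automorphism are immediate from the compatibility built into the ordering), and $g\mapsto \sigma_g^{L}$ is a group homomorphism into $\aut(L)$ because this can be checked on each element, which lies in some $L_\iota$. Hence $(L,\text{action})\in\mathcal{P}$ is an upper bound for the chain. By Zorn's Lemma $\mathcal{P}$ has a maximal element $(K',\text{action})$.

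It remains to see that $K'$ is algebraic over $K$ and that $(K',\text{action})$ is maximal in the sense demanded, i.e. that no proper algebraic extension of $K'$ inside $C^{\alg}$ admits a compatible $G$-action. Algebraicity is automatic: $K'\subseteq C^{\alg}$ and $C^{\alg}$ is algebraic over $C\subseteq K$. For the maximality statement, suppose $K'\subseteq K''\subseteq C^{\alg}$ is a field extension to which the $G$-action on $K'$ extends; then $(K'',\text{action})\in\mathcal{P}$ and $(K',\text{action})\leqslant (K'',\text{action})$, so by maximality $K''=K'$. This is exactly the assertion of the lemma, with the phrase ``maximal field extension $K\subseteq K'$ (inside $C^{\alg}$) such that the action of $G$ on $K$ extends to $K'$'' understood as: $K'$ admits such an extended action and is maximal among subfields of $C^{\alg}$ with this property.

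I do not anticipate a serious obstacle; the only point requiring care is to make the elements of the poset be \emph{pairs} (field together with chosen extended action) rather than bare fields, so that the union over a chain inherits an unambiguous $G$-action and Zorn's Lemma applies cleanly. (One could alternatively argue via Lemma \ref{alg_ext}, translating the statement into the existence of a \emph{minimal} closed subgroup $\mathcal{G}_0\leqslant\mathcal{G}$ with $\mathcal{G}_0\mathcal{N}=\mathcal{G}$ — here minimality would be obtained by a compactness/inverse-limit argument on closed subgroups, since an intersection of a chain of closed subgroups $\mathcal{G}_0$ satisfying $\mathcal{G}_0\mathcal{N}=\mathcal{G}$ still satisfies it by compactness of $\mathcal{G}$ — but the direct Zorn argument on fields above is shorter and more transparent.)
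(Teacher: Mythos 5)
Your proof is correct and is essentially the paper's own argument: the paper's proof is a one-line appeal to Zorn's Lemma, justified by the observation that the union of an increasing chain of fields with compatible $G$-actions again carries a natural $G$-action. You have simply written out the same Zorn argument in full detail (correctly insisting that the poset consist of pairs of a field together with a chosen action).
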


\begin{proof}
By Zorn's lemma; since there is a natural $G$-action on the union of an increasing chain of fields with $G$-actions.
\end{proof}

\begin{remark}\label{largest}
It is natural to ask whether: 
\begin{enumerate}
\item There is a \emph{largest} underlying field (inside $K^{\alg}$) admitting a $G$-action extending the one on $K$ (such a field would play the role of ``$G$-algebraic closure'' of $K$).

\item Any two difference fields from Lemma \ref{acl} are isomorphic as difference fields.

\item Any two difference fields from Lemma \ref{acl} are isomorphic as pure fields.
\end{enumerate}
Clearly $(1)$ implies $(2)$ and $(2)$ implies $(3)$. In the examples below, we will see that in some situations $(1)$ holds, but in general even $(3)$ need not hold.
\end{remark}

\begin{example}\label{ex39}
Let $q$ be a prime power, $G=\Zz/n\Zz$ and $C=\Ff_q$. We will see that in such a case the question in Remark \ref{largest} has an affirmative answer.

Since $\gal(\Ff_{q^n}/\Ff_q)\cong \Zz/n\Zz$, the field $\Ff_{q^n}$ becomes a $\Zz/n\Zz$-difference field. We have

$$\mathcal{G}=\gal(\Ff_q^{\alg}/\Ff_q)\cong \widehat{\Zz}\cong \prod_{p\in \Pp}\Zz_p,$$
where $\Zz_p$ denotes the additive group of the ring of $p$-adic integers. Let us take the decomposition of $n$ into prime powers

$$n=p_1^{\alpha_1}\ldots p_k^{\alpha_k}.$$
Then we have
$$\mathcal{N}:= p_1^{\alpha_1}\Zz_{p_1}\times\ldots \times p_k^{\alpha_k}\Zz_{p_k} \times \prod_{p\in \Pp\setminus\{p_1,\ldots,p_k\}}\Zz_p.$$
Therefore, there is a \emph{smallest} $\mathcal{G}_0$ ($= \Zz_{p_1}\times\ldots \times\Zz_{p_k}$) such that $\mathcal{G}_0\mathcal{N}=\mathcal{G}$.
Hence the ``$\Zz/n\Zz$-closure'' of $\Ff_{q^n}$ coincides with
$$(\Ff_q^{\alg})^{\mathcal{G}_0\cap \mathcal{N}}=(\Ff_q^{\alg})^{p_1^{\alpha_1}\Zz_{p_1}\times\ldots \times p_k^{\alpha_k}\Zz_{p_k}}.$$
We will see (Corollary \ref{ex39bis}) that the above field with the induced $\Zz/n\Zz$-action is a model of $\Zz/n\Zz-\tcf$.
\end{example}

\begin{remark}
If $(K,\overline{\sigma})\models G-\tcf$ has characteristic $p>0$, then the $G$-difference 
structure on $K\cap \Ff_p^{\alg}$ factors through a $\Zz/n\Zz$-difference structure, where $\Zz/n\Zz$
is a cyclic quotient of $G$, since a finite subgroup of
$\gal(F/\Ff_p)$ is necessarily cyclic for any subfield $F\subseteq \Ff_p^{\alg}$.
\end{remark}

\begin{example}\label{ex311}
Let $\zeta_n\in \Cc$ be the $n$-th primitive root of unity. Then we have
$$\gal(\mathbb{Q}(\zeta_n)/\mathbb{Q})\cong (\Zz/n\Zz)^*.$$
In particular, we have
$$\gal(\mathbb{Q}(\zeta_8)/\mathbb{Q})\cong \Zz/2\Zz\times \Zz/2\Zz,\ \ \ \gal(\mathbb{Q}(\zeta_{16})/\mathbb{Q})\cong \Zz/4\Zz\times \Zz/2\Zz.$$
The tower of fields $\mathbb{Q}\subset \mathbb{Q}(\zeta_8)\subset \mathbb{Q}(\zeta_{16})$ corresponds to the following epimorphism of Galois groups
$$\Zz/4\Zz\times \Zz/2\Zz\to \Zz/2\Zz\times \Zz/2\Zz,\ \ \ (x,y)\mapsto (r_2(x),y),$$
where $r_2$ is the natural epimorphism $\Zz/4\Zz\to\Zz/2\Zz$.
Let $\phi_1\in \gal(\mathbb{Q}(\zeta_8)/\mathbb{Q})$ correspond to $(1,0)$ (so e.g. $\phi_1(\zeta_8)=\zeta_8^3$) and $\phi_2\in \gal(\mathbb{Q}(\zeta_8)/\mathbb{Q})$ correspond to $(0,1)$ (so $\phi_2(\zeta_8)=\zeta_8^{-1}$). Then $(\mathbb{Q}(\zeta_8),\phi_1),(\mathbb{Q}(\zeta_8),\phi_2)$ are $\Zz/2\Zz$-difference fields (extending the non-trivial $\Zz/2\Zz$-difference field structure on $\mathbb{Q}(\zeta_8+\zeta_8^5)$). However, looking at the above epimorphism of Galois groups, we see that $(\mathbb{Q}(\zeta_8),\phi_1)$ does \emph{not} extend to a $\Zz/2\Zz$-difference field structure on $\mathbb{Q}(\zeta_{16})$, but $(\mathbb{Q}(\zeta_8),\phi_2)$ \emph{does} extend using
$$\phi:\mathbb{Q}(\zeta_{16})\to \mathbb{Q}(\zeta_{16}),\ \ \ \phi(\zeta_{16})=\zeta_{16}^{-1}.$$
Hence, in this case, Question $(1)$ in Remark \ref{largest} has a negative answer. It is also easy to see that this example even gives a negative answer to Question $(3)$ in Remark \ref{largest}. If $K=\mathbb{Q}(\zeta_8+\zeta_8^5)$ with the non-trivial $\Zz/2\Zz$-difference structure, then take $K'$ extending $(\mathbb{Q}(\zeta_8),\phi_1)$ and $K''$ extending $(\mathbb{Q}(\zeta_8),\phi_2)$. Then $K'$ can not be isomorphic to $K''$ over $K$, since $K''$ contains $\mathbb{Q}(\zeta_{16})$ being the (unique in $\mathbb{Q}^{\alg}$) splitting field of the polynomial $X^{16}-1$ over $\mathbb{Q}(\zeta_8+\zeta_8^5)$ and $K'$ does not contain $\mathbb{Q}(\zeta_{16})$.
\end{example}

\subsection{Fields of constants and $K$-strongly PAC fields}\label{PACfields}
In this part, we investigate the field of constants
of an existentially closed $G$-transformal field. We show that this field enjoys a \emph{$K$-strongly PAC} property, which we introduce first.
Finally, we obtain Theorem \ref{equiv}, which gives us a nice description of the models of $G-\tcf$ in a pure algebraic terminology.
\begin{definition}
We say that a field $F$ is \emph{$K$-strongly PAC} ($K$-strongly pseudo-algebraically closed), if
$F\subseteq K$ and every $K$-irreducible $F$-variety has an $F$-rational point.
\end{definition}

\begin{remark}
Naturally, being a $K$-strongly PAC field implies being a PAC field for algebraic
subfields of $K$.
\end{remark}

\begin{theorem}\label{thm_kpac1}
If $(K,\overline{\sigma})\models G-\tcf$, then $K^G$ is a $K$-strongly PAC field.
\end{theorem}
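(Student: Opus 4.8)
The plan is to deduce this from the axiom scheme $(\clubsuit)$ by translating a $K$-irreducible $C$-variety (where $C := K^G$) into a $G$-invariant prime ideal over $K$, in the spirit of Remark \ref{rem_Cspace}. So let $V$ be a $K$-irreducible affine $C$-variety; I may assume $V \subseteq \Aa^n$ and let $\mathfrak{p} \trianglelefteqslant C[X]$ be its (prime) ideal, with $K[X] \cdot \mathfrak{p}$ still prime since $V$ is $K$-irreducible. The first move is to produce, inside a $G$-transformal extension of $K$, a generic point of $V$ which is \emph{fixed by $G$}: take the fraction field $C(V) = \mathrm{Frac}(C[X]/\mathfrak{p})$, and extend the $G$-action from $C$ to $K \otimes_C C(V)$ by letting $G$ act trivially on the $C(V)$-factor. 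Because $K$ and $C(V)$ are linearly disjoint over $C$ (as $K/C$ is finite Galois and $C(V)/C$ is a regular extension — $K[X]\mathfrak{p}$ being prime is exactly $C(V)$ being linearly disjoint from $K$ over $C$, equivalently $C$ relatively algebraically closed in $C(V)$ up to the regularity subtlety, which I'll have to handle), the tensor product $K \otimes_C C(V)$ is a domain; let $L'$ be its fraction field with the induced $G$-transformal structure. Then $C' := (L')^G \supseteq C(V)$, and the tuple $\bar a := (X_1 + \mathfrak{p}, \ldots, X_n + \mathfrak{p}) \in (C')^n$ is $G$-fixed, so $\bar\sigma(\bar a) = (\bar a, \ldots, \bar a)$ lies on the ``diagonal copy'' of $V$ inside $\Aa^{ne}$.

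Now set $I$ to be the prime $G$-invariant ideal of $K[\pmb X_1, \ldots, \pmb X_e]$ which is the vanishing ideal of this diagonal generic point — concretely $I = \{ f \mid f(\bar a, \ldots, \bar a) = 0 \text{ in } L'\}$, which is prime (it is the kernel of a map into a domain) and $G$-invariant by the computation in the proof of Lemma \ref{mod_complete}. To fire $(\clubsuit)$ I need a strictly larger ideal $J \supsetneq I$: here I take $J = I + (h)$ for any single polynomial $h$ with $h(\bar a, \ldots, \bar a) \neq 0$, e.g. any polynomial not in $I$; such $h$ exists since $I$ is prime hence proper. Applying $(\clubsuit)$ with this $I \subsetneq J$ yields $b \in K^n$ with $\bar\sigma(b) \in V_K(I) \setminus V_K(J)$. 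The point is that $\bar\sigma(b) \in V_K(I)$ forces $\sigma_1(b) = \cdots = \sigma_e(b)$ (I should arrange that the ``diagonal'' equations $X_{1,l} - X_{i,l}$ lie in $I$ — they do, since they vanish at $(\bar a, \ldots, \bar a)$), so $b \in K^G = C$, and moreover $b \in V(C)$ because the defining equations of $V$ (pulled back to the first block of coordinates) lie in $I$. Thus $V$ has a $C$-rational point.

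The step I expect to be the main obstacle is the linear-disjointness/regularity bookkeeping needed to form the $G$-transformal extension $L'$: ``$K[X]\mathfrak{p}$ is prime'' gives that $K \otimes_C (C[X]/\mathfrak{p})$ is a domain, but to extend the $G$-action cleanly and to make $C'$ genuinely contain a generic point of $V$, I want $C(V)$ to be linearly disjoint from $K$ over $C$ at the level of fraction fields, which requires knowing $C$ is relatively algebraically closed in $C(V)$ — this can fail even when $K[X]\mathfrak p$ is prime if $V$ is not geometrically irreducible, so I will have to argue more carefully. The clean fix is to observe that $K$-irreducibility of $V$ means precisely that $K$ and $C(V)$ are linearly disjoint over $C$ as fields (this is the standard characterization: the scheme $V \times_{\spec C} \spec K$ is irreducible iff $K \otimes_C C(V)$ is a domain, and since $K/C$ is finite normal, being a domain upgrades to linear disjointness of the fraction fields), so the tensor product of fields $K \otimes_C C(V)$ embeds in a field and we take $L'$ to be (the fraction field of) that; the $G$-action on $K$ extends with $G$ acting trivially on $C(V)$, giving a $G$-transformal field $(L', \bar\sigma)$ into which $K$ embeds $G$-transformally, which is all that is needed to run the argument.

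\textbf{Remark on an alternative.} One could instead invoke model completeness (Lemma \ref{mod_complete}): exhibit the $G$-fixed generic point $\bar a$ of $V$ inside \emph{some} $G$-transformal extension $(L', \bar\sigma)$ of $K$, note that ``there exists a $G$-fixed tuple lying on $V$ but off a given proper subvariety'' is an existential $\mathcal L_G$-sentence with parameters in $K$ (the $G$-fixedness being expressible by $\sigma_k(x) = x$), and conclude by existential closedness of $K$ that such a tuple already exists in $K$, hence in $C = K^G$. This is arguably cleaner and avoids re-deriving the ideal-theoretic translation, but it still needs the construction of $L'$, so the same linear-disjointness point is the crux either way.
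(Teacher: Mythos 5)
Your proposal is correct and is essentially the paper's argument: the paper also applies the axiom scheme to the (multi-)diagonal of $V^e$ inside $\Aa^{ne}$ (using the geometric form $(\clubsuit^{\mathrm{g}})$ with $W=\emptyset$), so that the witness $a$ is forced to satisfy $\sigma_1(a)=\cdots=\sigma_e(a)$ and hence lands in $V(K^G)$. Your extra work — realizing the diagonal's vanishing ideal as the kernel of evaluation at a $G$-fixed generic point in the $G$-transformal field $\operatorname{Frac}(K\otimes_{K^G} K^G(V))$, with the linear-disjointness point settled by $K/K^G$ being finite Galois — just carries out the translation between $(\clubsuit)$ and $(\clubsuit^{\mathrm{g}})$ explicitly, and is sound.
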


\begin{proof}
Let $C:=K^G$ and $V_0\subseteq \Aa^{n}$ be a non-empty $C$-subscheme such that the scheme 
$$V_1:=V_0\times_{\spec(C)}\spec(K)$$
 is $K$-irreducible. Let $V$ be the (multi-)diagonal of $V_1^e$. Then for each $\sigma\in G$, we have $\sigma(V)=V$. Since $V$ is defined over $C$, for each $\sigma\in G$, we have $V^{\sigma}=V$, where $V^{\sigma}$ is as in Remark \ref{rem_axioms}(2). Hence $V$ satisfies the assumptions of $(\clubsuit^{\mathrm{g}})$ in Remark \ref{rem_axioms}(2). Therefore (taking $W=\emptyset$), there is $a\in V_0(K)$ such that $(\sigma_1(a),\ldots,\sigma_e(a))\in V(K)$. Since $V$ is the diagonal of $V_1^e$, we get that $\sigma_1(a)=\ldots=\sigma_e(a)=a$, so $a\in V_0(C)$.

\end{proof}

\begin{cor}\label{K_is_PAC}
If $(K,\overline{\sigma})\models G-\tcf$, then $K$ is a PAC field.
\end{cor}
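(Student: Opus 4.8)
The plan is to deduce this directly from Theorem \ref{thm_kpac1} together with the fact that $K$ is a finite separable extension of $C := K^G$ (Corollary \ref{cor33}: both $K$ and $C$ are perfect, and $[K:C] = e$ by strictness, Lemma \ref{strict}). Recall that a field $F$ is PAC precisely when every absolutely irreducible $F$-variety has an $F$-rational point. So let $V$ be a non-empty absolutely irreducible $K$-variety; I want to produce a $K$-rational point on $V$.

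The key idea is restriction of scalars (Weil descent) from $K$ down to $C$. Since $K/C$ is a finite separable extension of degree $e$, for the affine $K$-variety $V$ the Weil restriction $W := \mathrm{Res}_{K/C}(V)$ exists as an affine $C$-variety, and it has the defining property that for every field extension $C \subseteq L$ one has $W(L) \cong V(L \otimes_C K)$; in particular $W(C) \cong V(K)$. Now I would argue that $W$, viewed as a $C$-variety, becomes $K$-irreducible after base change to $K$: indeed $W \times_{\spec C} \spec K$ decomposes, via the separability of $K/C$, as a product indexed by the $e$ embeddings of $K$ into $K^{\alg}$ over $C$ of the conjugate varieties $V^{\tau}$, and since $V$ is absolutely irreducible each factor is irreducible, so $W_K$ is a product of $e$ irreducible varieties — in particular every irreducible component of $W_K$ is geometrically irreducible, which is exactly the condition that $W$ is a $K$-irreducible $C$-variety (a $C$-variety all of whose $K$-components are absolutely irreducible). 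Then Theorem \ref{thm_kpac1} applies: $C = K^G$ is $K$-strongly PAC, so $W$ has a $C$-rational point, i.e. $W(C) \neq \emptyset$, hence $V(K) \cong W(C) \neq \emptyset$, as desired.

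The main obstacle I anticipate is making the Weil-restriction bookkeeping precise enough to feed into the definition of $K$-strongly PAC — specifically, checking that ``$K$-irreducible $C$-variety'' in the sense of the paper (which I read as: irreducible over $C$ and remaining without superfluous absolutely-reducible components after base change to $K$, equivalently $C$-irreducible with $K^{\alg}$-components conjugate over $K$) is genuinely what the decomposition of $W_K$ gives. One has to be a little careful: $W$ itself need not be $C$-irreducible, but the relevant hypothesis in the definition of $K$-strongly PAC is only that the base change to $K$ is $K$-irreducible in the appropriate scheme-theoretic sense, and the product decomposition of $W_K$ into absolutely irreducible pieces is precisely that. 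An alternative, perhaps cleaner route avoiding Weil restriction: take $V$ absolutely irreducible over $K$, let $V_0 := V$ regarded as a $C$-variety (``forgetting'' that the base is $K$, i.e. pushing forward along $\spec K \to \spec C$ is not what I want) — this does not quite work, so I expect the Weil-restriction argument is the right one, and the descent-of-irreducibility step is the point requiring care. Everything else is immediate from the results already established.
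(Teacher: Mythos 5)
Your argument is correct, but it is not the paper's route. The paper's proof is a one-liner: by Theorem \ref{thm_kpac1} the field $C=K^G$ is $K$-strongly PAC, hence PAC, and then the Ax--Roquette theorem (\cite[Cor.\ 11.2.5]{FrJa}: any algebraic extension of a PAC field is PAC) gives that $K$ is PAC. What you do instead is essentially reprove the finite-separable case of Ax--Roquette by hand via Weil restriction: for $V$ absolutely irreducible over $K$, the restriction $W=\mathrm{Res}_{K/C}(V)$ satisfies $W\times_{\spec C}\spec K\cong\prod_{g\in G}V^{g}$, a product of geometrically irreducible varieties, which is itself geometrically irreducible; so $W$ is in fact \emph{absolutely} irreducible over $C$, and plain PAC-ness of $C$ already yields $W(C)=V(K)\neq\emptyset$. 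In particular the full strength of ``$K$-strongly PAC'' is not needed at this step (it is overkill, though harmless), and your worry that $W$ might fail to be $C$-irreducible evaporates: $W_{C^{\alg}}$ is irreducible. One small correction: your parenthetical gloss of ``$K$-irreducible $C$-variety'' as ``all $K$-components absolutely irreducible'' is not the paper's notion --- as the proof of Theorem \ref{thm_kpac1} makes clear, it means that $V_0\times_{\spec C}\spec K$ is irreducible as a $K$-scheme; your $W$ satisfies this stronger condition anyway, so the argument goes through. The trade-off: the paper's proof is shorter but imports a nontrivial external theorem, while yours is self-contained modulo standard facts about Weil restriction and products of geometrically irreducible varieties.
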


\begin{proof}
By a result of Ax-Roquette (Corollary 11.2.5 in \cite{FrJa}), any algebraic extension of a PAC field is again PAC. Hence the result follows from Theorem \ref{thm_kpac1}.
\end{proof}

\begin{definition}\label{def318}
 We call a pair $(C,K)$, where $C\subseteq K$ is a field extension,  a \emph{$G$-closed field} if
\begin{itemize}
\item[i)] the field $C$ is perfect,
\item[ii)] the extension $C\subseteq K$ is Galois with the Galois group $G$,
\item[iii)] the extension $C\subseteq K$ satisfies the negations of the equivalent conditions in Lemma \ref{alg_ext}, that is, the restriction map $\gal(C^{\alg}/C)\to \gal(K/C)$ is a Frattini cover (see Definition \ref{frattini.def}).
\end{itemize}
\end{definition}

\begin{remark}
The first two items of the above definition lead to the set-up of
Subsection \ref{subsec_alg_ext}, so to the assumptions of Lemma \ref{alg_ext}, needed
in the last item.
\end{remark}

\begin{remark}
If $(K,\overline{\sigma})\models G-\tcf$, then $(K^G,K)$ is a $G$-closed field.
\end{remark}

\begin{proof}
Proof. By Corollary \ref{cor33}, the field $K^G$ is perfect. Clearly, the extension $K^G\subseteq K$ is Galois with the Galois group $G$. Since the existence of an algebraic extension as in Lemma \ref{alg_ext}.(1) is an elementary property (for a fixed degree of the extension) and $(K,\bar{\sigma})$ is existentially closed, the last item from Definition \ref{def318} holds as well.
\end{proof}

We recall the definition of a bounded field.

\begin{definition}
A field $F$ is \emph{bounded} if for every natural number $n>0$, there are only finitely many extensions of $F$ of degree $n$ in $F^{\alg}$.
\end{definition}

\begin{theorem}\label{C_is_bounded}
 If $(C,K)$ is a $G$-closed field, then $C$ is a bounded field.
\end{theorem}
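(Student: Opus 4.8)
The plan is to bound, for each $n>0$, the number of degree-$n$ extensions of $C$ inside $C^{\alg}$ by controlling the absolute Galois group $\mathcal{G} := \gal(C^{\alg}/C)$ via the Frattini cover condition. Recall from Definition \ref{def318}(iii) and the remark following Lemma \ref{alg_ext} that the restriction map $\pi : \mathcal{G} \to G = \gal(K/C)$ is a Frattini cover, with kernel $\mathcal{N} = \gal(C^{\alg}/K)$. The key structural fact about Frattini covers (this is essentially \cite[Lemma 22.5.x]{FrJa}, and is the content I would cite) is that $\mathcal{N}$ is contained in the Frattini subgroup $\Phi(\mathcal{G})$ of $\mathcal{G}$, i.e. $\mathcal{N}$ consists of ``non-generators'': if a closed subgroup $\mathcal{H} \leqslant \mathcal{G}$ satisfies $\mathcal{H}\mathcal{N} = \mathcal{G}$ then $\mathcal{H} = \mathcal{G}$.

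First I would reduce boundedness of $C$ to a finiteness statement about open subgroups of $\mathcal{G}$: $C$ is bounded if and only if for every $n$ there are only finitely many open subgroups of $\mathcal{G}$ of index $n$, equivalently (by a standard profinite group argument) if and only if $\mathcal{G}$ has only finitely many open \emph{normal} subgroups of each index, equivalently $\mathcal{G}$ is a \emph{small} profinite group. Next I would exploit the Frattini cover: since $K$ is a finite Galois extension of $C$, the field $K$ is itself bounded if and only if $\mathcal{N}$ is a small profinite group; but $K$ is a PAC field which is an algebraic extension of the PAC field $C$ — wait, this runs the wrong way. Instead, the right route is: every open subgroup $\mathcal{U} \leqslant \mathcal{G}$ of index $n$ has $\mathcal{U}\mathcal{N}$ of index dividing both $n$ and $|G| = e$, hence index dividing $\gcd(n,e)$; since $[\mathcal{G}:\mathcal{N}] = e$ is finite, there are only finitely many subgroups between $\mathcal{N}$ and $\mathcal{G}$. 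So it suffices to bound, for each $m \mid e$, the number of open subgroups $\mathcal{U}$ with $\mathcal{U}\mathcal{N}$ equal to a fixed subgroup $\mathcal{G}_0 \supseteq \mathcal{N}$ of index $m$ and $[\mathcal{G}:\mathcal{U}] = n$. Such a $\mathcal{U}$ satisfies $\mathcal{U} \leqslant \mathcal{G}_0$ and $\mathcal{U}\mathcal{N} = \mathcal{G}_0$, with $[\mathcal{G}_0 : \mathcal{U}] = n/m$, and $\mathcal{N} \cap \mathcal{G}_0 = \mathcal{N}$ maps onto $\mathcal{G}_0/\mathcal{U}$... this is where the Frattini property of $\mathcal{N}$ inside $\mathcal{G}_0$ must be invoked.

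The cleanest formulation, which I would carry out in detail, is the following. Apply the classical theorem (see \cite[Ch. 22]{FrJa}) that a Frattini cover $\pi : \mathcal{G} \to G$ with $G$ finite has the property that $\mathcal{G}$ is small (equivalently, $C$ is bounded) if and only if the kernel $\mathcal{N}$ is small as a profinite group \emph{and} $G$ is finite — more precisely, one shows directly that the rank function is controlled: every finite quotient of $\mathcal{G}$ is an extension of a quotient of $G$ by a quotient of $\mathcal{N}$, so it is enough that $\mathcal{N}$ has finitely many open subgroups of each index that are normalized by the relevant data. But here is the actual punchline: the \emph{universal Frattini cover} argument (Sj\"{o}gren's Theorem \ref{sjor}, or the general theory in \cite[\S22.6]{FrJa}) shows that a Frattini cover of a finite group is a \emph{projective} (hence, being a Frattini cover, also ``small'') profinite group — projective profinite groups of finite... no. The honest main step, and the one I expect to be the real obstacle, is precisely to show $\mathcal{N} \leqslant \Phi(\mathcal{G})$ forces $\mathcal{G}$ to be small given that $\mathcal{G}/\mathcal{N} = G$ is finite; this is \emph{not} automatic (a free profinite group of infinite rank is not small), so the Frattini cover hypothesis must be used in an essential, non-formal way. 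I would therefore organize the proof as: (1) translate boundedness of $C$ into smallness of $\mathcal{G}$; (2) quote from \cite{FrJa} that $\mathcal{N} \leqslant \Phi(\mathcal{G})$; (3) show that any open subgroup $\mathcal{U}$ of $\mathcal{G}$ of index $n$ satisfies $\mathcal{U}\mathcal{N} = \mathcal{G}$ is \emph{false} in general, so instead list the finitely many groups $\mathcal{G}_0$ with $\mathcal{N} \leqslant \mathcal{G}_0 \leqslant \mathcal{G}$, and for each, use that $\pi|_{\mathcal{G}_0}$ realizes $\mathcal{G}_0$ as a Frattini cover of the finite group $\mathcal{G}_0/\mathcal{N}$; (4) induct on $e$ — but the base case is $\mathcal{N} = \mathcal{G}$, i.e. $G$ trivial, which is the degenerate case — so instead I would cite directly the statement in \cite{FrJa} that a Frattini cover of a finite group, when the cover itself is taken minimal (the universal Frattini cover is the obstruction), is small, and note that $\mathcal{G}$ being \emph{any} Frattini cover of $G$ is a quotient of the universal Frattini cover $\widetilde{G}$, which is small by \cite[Prop. 22.6.x]{FrJa}; quotients of small groups are small, giving the result.

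In short, the skeleton is: boundedness of $C$ $\iff$ $\mathcal{G}$ small; $\mathcal{G}$ is a Frattini cover of the finite group $G$, hence a quotient of the universal Frattini cover $\widetilde{G}$ of $G$; $\widetilde{G}$ is a small profinite group by the theory in \cite[\S22.6]{FrJa}; and smallness passes to quotients. The delicate point — the step I would flag as the crux — is the input from \cite{FrJa} that $\widetilde{G}$ is small (equivalently, finitely generated with only finitely many open subgroups of each index), which rests on the fact that the universal Frattini cover of a finite group is a finitely generated projective profinite group; everything else is routine Galois-correspondence bookkeeping.
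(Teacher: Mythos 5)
Your final skeleton is correct and rests on the same two facts as the paper's proof: the Frattini cover condition forces $\mathcal{G}=\gal(C^{\alg}/C)$ to be topologically finitely generated (the paper gets this directly, by noting that lifts of coset representatives of $\mathcal{N}$ generate a closed subgroup $\mathcal{G}_0$ with $\mathcal{G}_0\mathcal{N}=\mathcal{G}$, hence $\mathcal{G}_0=\mathcal{G}$, or equivalently via $\rk(\mathcal{G})=\rk(G)<\infty$; you get it by realizing $\mathcal{G}$ as a quotient of the finitely generated universal Frattini cover $\widetilde{G}$), and a finitely generated profinite group has only finitely many open subgroups of each index (Ribes--Zalesskii, Prop.\ 2.5.1(a)). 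The detour through $\widetilde{G}$ and its projectivity is harmless but unnecessary, and the exploratory false starts in the middle of your write-up should be excised; the argument as finally stated is sound and essentially the paper's.
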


\begin{proof}
 After a standard Galois theory argument, we conclude that
 $\mathcal{N}:=\gal(C^{\alg}/K)$ and $\mathcal{G}:=\gal(C^{\alg}/C)$ fit into the assumptions of Lemma \ref{alg_ext}.
 Let $g_1,\ldots,g_{e}$ denote the representatives of different cosets of $\mathcal{G}/\mathcal{N}$, and let $\mathcal{G}_0$
 be a topological subgroup of $\mathcal{G}$ generated by $g_1,\ldots,g_{e}$. We have $\mathcal{G}_0\mathcal{N}=\mathcal{G}$,
 so $\mathcal{G}$ is finitely generated as a profinite group.
 Alternatively, since $\mathcal{G}\to\gal(K/C)$ is a Frattini cover,
$$\rk(\mathcal{G})=\rk(\gal(K/C))<\infty.$$
 By \cite[Proposition 2.5.1.a)]{ribzal}, for each $n>1$, the number
 of closed subgroups of $\mathcal{G}$ of index $n$ is finite.
\end{proof}

\begin{cor}\label{4_to_2}
If $(K,\overline{\sigma})\models G-\tcf$, then $K^G$ is a bounded $K$-strongly PAC field.
\end{cor}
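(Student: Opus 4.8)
The plan is to combine the two preceding results directly. Corollary \ref{4_to_2} is the statement that if $(K,\overline{\sigma})\models G-\tcf$, then $K^G$ is a bounded $K$-strongly PAC field, and this is essentially an immediate consequence of three facts already established in the excerpt: Theorem \ref{thm_kpac1} (which gives that $K^G$ is $K$-strongly PAC), Theorem \ref{C_is_bounded} (which gives that the field of constants of a $G$-closed field is bounded), and the observation (the unnamed Remark just before Definition \ref{def318}'s bounded-field material) that $(K^G,K)$ is a $G$-closed field whenever $(K,\overline{\sigma})\models G-\tcf$.

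So concretely, first I would invoke Theorem \ref{thm_kpac1} to conclude that $K^G$ is $K$-strongly PAC. Then I would recall that $(K^G,K)$ is a $G$-closed field: this was verified in the Remark following Definition \ref{def318}, using Corollary \ref{cor33} for perfectness of $K^G$, the fact that $K^G\subseteq K$ is visibly Galois with group $G$, and the fact that being existentially closed rules out the nontrivial algebraic extensions described in Lemma \ref{alg_ext}(1), so that the restriction map is a Frattini cover. Finally, applying Theorem \ref{C_is_bounded} to the $G$-closed field $(K^G,K)$ yields that $K^G$ is bounded. Putting these together gives that $K^G$ is a bounded $K$-strongly PAC field, which is exactly the assertion.

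There is essentially no obstacle here — the corollary is purely a matter of bookkeeping, assembling Theorem \ref{thm_kpac1}, the $G$-closedness remark, and Theorem \ref{C_is_bounded}. If anything needs a word of care, it is simply making explicit that the hypothesis $(K,\overline{\sigma})\models G-\tcf$ feeds correctly into all three inputs (in particular that $K^G$ is perfect so that the $G$-closed-field setup of Subsection \ref{subsec_alg_ext} applies), but each of these points has already been handled above. Thus the proof is just: by Theorem \ref{thm_kpac1}, $K^G$ is $K$-strongly PAC; since $(K^G,K)$ is a $G$-closed field, Theorem \ref{C_is_bounded} shows $K^G$ is bounded.
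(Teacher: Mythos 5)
Your proof is correct and follows exactly the route the paper intends: Theorem \ref{thm_kpac1} gives the $K$-strongly PAC part, the remark after Definition \ref{def318} gives that $(K^G,K)$ is $G$-closed, and Theorem \ref{C_is_bounded} then gives boundedness. Nothing is missing.
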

Before stating the next theorem, we recall one more definition which will be needed in its proof and also in the sequel. 

\begin{definition}
 A profinite group $G$ is \emph{projective} if for every epimorphism of profinite groups $\pi: B\to A$ and homomorphism of profinite groups $\phi_A:G\to A$, there is a homomorphism of profinite groups $\phi_B:G\to B$ such that
 $$\pi\circ\phi_B=\phi_A.$$
 \end{definition}

\begin{theorem}\label{elem}
Suppose that $(C,K)\subseteq (C',K')$ is an extension of $G$-closed
 fields such that $C$ and $C'$ are PAC fields. Then the field extension $C\subseteq C'$ is elementary.
\end{theorem}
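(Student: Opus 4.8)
The plan is to use the standard model-theoretic criterion for PAC fields: two PAC fields $C\subseteq C'$ form an elementary extension provided (a) $C$ is existentially closed in $C'$, (b) $C$ is algebraically closed in $C'$ (i.e. $C$ is relatively algebraically closed in $C'$), and (c) the restriction map $\gal(C')\to\gal(C)$ is an isomorphism; in fact by the Elementary Equivalence Theorem for PAC fields (see \cite[Chapter 20]{FrJa}, cf. the Embedding Lemma and Proposition 20.3.3 there) elementarity of $C\subseteq C'$ reduces, once both are PAC, to checking that $C$ is algebraically closed in $C'$ together with an isomorphism of absolute Galois groups compatible with the inclusion. So the first step is to record that $C$ and $C'$ are perfect PAC (given), and that by Theorem \ref{C_is_bounded} both are bounded, hence their absolute Galois groups are small profinite groups; this makes the Galois-theoretic bookkeeping finitary.

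Next I would exploit the $G$-closed hypothesis. Write $\mathcal{G}=\gal(C^{\alg}/C)$, $\mathcal{N}=\gal(C^{\alg}/K)$, and similarly $\mathcal{G}'=\gal((C')^{\alg}/C')$, $\mathcal{N}'=\gal((C')^{\alg}/K')$. The inclusion $(C,K)\subseteq(C',K')$ of $G$-closed fields gives a restriction map $r:\mathcal{G}'\to\mathcal{G}$ carrying $\mathcal{N}'$ into $\mathcal{N}$ and inducing the identity on the common quotient $G$. The key structural input is Definition \ref{def318}(iii): both $\mathcal{G}\to G$ and $\mathcal{G}'\to G$ are Frattini covers. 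Now a Frattini cover of a finite group which is also projective is precisely the universal Frattini cover, and — this is the crucial point — since $C$ is PAC, $\mathcal{G}=\gal(C)$ is projective (Ax's theorem, \cite[Theorem 11.6.2]{FrJa}); likewise $\mathcal{G}'$ is projective. A Frattini cover of $G$ which is projective is unique up to isomorphism over $G$ (it is the universal Frattini cover $\widetilde{G}$, \cite[Section 22.6]{FrJa}). Hence $r:\mathcal{G}'\to\mathcal{G}$ is a morphism of Frattini covers of $G$ between two copies of $\widetilde{G}$; because a Frattini cover admits no proper closed subgroup surjecting onto the base, $r$ is surjective, and since both groups are isomorphic profinite groups that are (topologically) finitely generated — here boundedness enters — a surjection $\widetilde{G}\to\widetilde{G}$ is an isomorphism. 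Thus $r$ is an isomorphism of profinite groups compatible with the projections to $G$.

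Having identified $\gal(C')\cong\gal(C)$ via restriction, I would then check that $C$ is algebraically closed in $C'$: any finite extension $C\subseteq C_1$ inside $C^{\alg}$ that embeds into $C'$ would, via the Galois correspondence and the isomorphism $r$, produce a proper open subgroup of $\mathcal{G}'$ surjecting onto $\mathcal{G}$, contradicting that $r$ is an isomorphism (equivalently, $C'\cap C^{\alg}=C$ inside a common algebraic closure — one sets up the ambient $(C')^{\alg}$ and uses that $C^{\alg}$ sits inside it). With $C$ relatively algebraically closed in $C'$, both fields PAC, and the restriction of absolute Galois groups an isomorphism, the Elementary Equivalence Theorem for PAC fields (\cite[Proposition 20.3.3, Theorem 20.3.4]{FrJa}, using that PAC fields are $e$-free / have the embedding property here vacuously since the Galois groups agree) yields that $C\preceq C'$ is an elementary extension.

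The main obstacle I anticipate is the precise citation-level argument that a restriction map between two universal Frattini covers of $G$ that is compatible with the projections must be an isomorphism rather than merely a surjection; this needs the finite generation (boundedness, Theorem \ref{C_is_bounded}) so that Hopficity of finitely generated profinite groups applies, plus the rigidity of universal Frattini covers under $G$-morphisms. The model-theoretic last step is then essentially a black-box application of the classical theory of PAC fields, so the real content is the Galois-cohomological identification $\gal(C)\cong\gal(C')$ forced by $G$-closedness together with projectivity coming from PAC.
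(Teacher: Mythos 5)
Your proposal is correct and follows the same overall strategy as the paper: reduce elementarity of $C\subseteq C'$ to the restriction map $r:\gal(C')\to\gal(C)$ being an isomorphism via the Elementary Equivalence Theorem for PAC fields, get surjectivity of $r$ from the Frattini condition on $(C,K)$ (equivalently, regularity of $C\subseteq C'$), and get injectivity from projectivity of the absolute Galois groups (Ax) combined with the Frattini condition. The one place where you genuinely diverge is the injectivity step. The paper uses projectivity of $\gal(C')$ to split the epimorphism $r$, obtaining a closed subgroup $\mathcal{G}_0\leqslant\gal(C')$ with $r|_{\mathcal{G}_0}$ an isomorphism onto $\gal(C)$; since $\mathcal{G}_0$ then surjects onto $G$, the Frattini condition on $(C',K')$ forces $\mathcal{G}_0=\gal(C')$, and $r$ is injective. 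You instead first identify both $\gal(C)$ and $\gal(C')$ with the universal Frattini cover $\widetilde{G}$ (a projective Frattini cover of $G$ is the universal one, unique up to isomorphism) and then invoke Hopficity of finitely generated profinite groups to conclude that the surjection $r$ between two copies of $\widetilde{G}$ is an isomorphism. Both routes are valid; yours needs two extra standard inputs (uniqueness of the universal Frattini cover and Hopficity, the latter justified by finite generation as in Theorem \ref{C_is_bounded}), while the paper's splitting argument is more self-contained and uses projectivity of $\gal(C')$ only. Your surjectivity argument (image of $r$ is a closed subgroup of $\gal(C)$ surjecting onto $G$, hence everything by the Frattini property) is essentially the parenthetical remark in the paper's proof made explicit.
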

\begin{proof}
By Corollary 20.3.4 in \cite{FrJa}, it is enough to show that the restriction map
 $r:\mathcal{G}(C')\to \mathcal{G}(C)$ 
is an isomorphism, where we use the notation $\mathcal{G}(F):=\gal(F^{\alg}/F)$ for a field $F$. Since the extension $C\subseteq C'$ is regular
regular (e.g. by using the fact that the restriction map 
$\mathcal{G}\to\gal(K/C)$
is a Frattini cover), this map is an epimorphism. We have the following commutative diagram:
\begin{equation*}
 \xymatrix{  \mathcal{G}(C') \ar[d]^{}  \ar[rr]^{r} &  & \mathcal{G}(C) \ar[d]^{}   \\
  G \ar[rr]^{\cong} &  &  G.}
\end{equation*}
By \cite[Theorem 11.6.2]{FrJa} (originally proved by Ax), the profinite group $\mathcal{G}(C')$ is projective, 
so (see the beginning of the page 88 in \cite{ChaPil}) there is a closed subgroup 
$\mathcal{G}_0<\mathcal{G}(C')$ 
such that $r|_{\mathcal{G}_0}:\mathcal{G}_0\to\mathcal{G}(C)$ is an isomorphism. From the diagram above, we get that $\mathcal{G}_0 \mathcal{N}=\mathcal{G}(C')$, where
$\mathcal{N}=\ker(\mathcal{G}(C')\to G)$, so $\mathcal{G}_0=\mathcal{G}(C')$ (since $(C',K')$ is $G$-closed field).
\end{proof}

\begin{cor}\label{gcloimpgtcf}
Suppose $(C,K)$ is a $G$-closed field such that $C$ is a PAC field. Then $(K,\gal(K/C))$ is a model of $G-\tcf$.
\end{cor}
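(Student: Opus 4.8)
The plan is to show that an arbitrary $G$-closed field $(C,K)$ with $C$ PAC is \emph{elementarily equivalent} (in fact, via an embedding) to the field of constants of some genuine model of $G-\tcf$, and then pull back the property $(\clubsuit^{\mathrm g})$ using model completeness. Concretely, first I would invoke Lemma \ref{embeds} to embed the $G$-transformal field $(K,\gal(K/C))$ into some $(K^*,\overline{\sigma}^*)\models G-\tcf$; set $C^*:=(K^*)^G$. By Theorem \ref{thm_kpac1} and the remarks preceding Corollary \ref{4_to_2}, $(C^*,K^*)$ is a $G$-closed field with $C^*$ PAC (indeed $K$-strongly PAC). The key observation is that the inclusion $(C,K)\subseteq(C^*,K^*)$ is an extension of $G$-closed fields in the sense of Theorem \ref{elem}: $C\subseteq C^*$ is a field extension, both have absolute Galois group mapping onto $G$ via a Frattini cover, and $K=CK^*\cap\text{(something)}$ — one must check $K\subseteq K^*$ is exactly the $G$-subextension, i.e. $K = (C^*)^{\text{(subgroup)}}$, but since $K$ is Galois over $C$ with group $G$ and $K\subseteq K^*$ with $K^*$ Galois over $C^*$ with group $G$, the compatibility of the $G$-actions (guaranteed by the embedding being $G$-transformal) gives $K^* = KC^*$ and the restriction $\gal(K^*/C^*)\to\gal(K/C)$ is an isomorphism. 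This is precisely the hypothesis of Theorem \ref{elem}.

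Applying Theorem \ref{elem} to the extension $(C,K)\subseteq(C^*,K^*)$ of $G$-closed fields with PAC constant fields, we conclude that $C\subseteq C^*$ is an elementary extension of pure fields. Now I want to transfer this back to the $G$-transformal setting. By Remark \ref{rem_Cspace}(4), the $G$-transformal field $(K,\gal(K/C))$ is $G$-transformally isomorphic to $(C^e,\odot,\oplus,(\tilde\sigma_k))$ where all the structure constants $\bar c,\bar d$ lie in $C$; likewise $(K^*,\overline{\sigma}^*)\cong((C^*)^e,\ldots)$ with the \emph{same} structure constants $\bar c,\bar d$ (since $\bar c,\bar d$ can be chosen inside $C\subseteq C^*$, by Remark \ref{rem_Cspace}(3)), and the commuting square of Remark \ref{rem_Cspace} shows the isomorphism $C^e\hookrightarrow(C^*)^e$ is the coordinatewise inclusion. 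Since each operation of the $\mathcal{L}_G$-structure on $C^e$ is quantifier-free $\mathcal{L}_{\mathrm{ring}}(\bar c,\bar d)$-definable in $C$, every $\mathcal{L}_G$-sentence about $(C^e,\ldots)$ translates into an $\mathcal{L}_{\mathrm{ring}}(\bar c,\bar d)$-sentence about $C$, and the elementarity of $C\subseteq C^*$ then yields that $(K,\gal(K/C))\preceq(K^*,\overline{\sigma}^*)$ as $\mathcal{L}_G$-structures. As $(K^*,\overline{\sigma}^*)\models G-\tcf$ and $G-\tcf$ is a (model complete, hence in particular $\forall\exists$-axiomatizable) theory, being an elementary substructure gives $(K,\gal(K/C))\models G-\tcf$.

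The step I expect to be the main obstacle is verifying carefully that the embedding provided by Lemma \ref{embeds} really makes $(C,K)\subseteq(C^*,K^*)$ an \emph{extension of $G$-closed fields} as required by Theorem \ref{elem} — in particular that $C\subseteq C^*$ (not merely $K\subseteq K^*$) and that the two $G$-actions are compatible so the restriction map on Galois groups is the identity on $G$. The first point is immediate since an $\mathcal{L}_G$-embedding sends constants to constants; the compatibility follows because the embedding is $G$-transformal by construction, so $\sigma_k^*$ restricts to $\sigma_k$ on $K$, and hence the restriction $\gal(K^*/C^*)\to\gal(K/C)$ commutes with the identifications with $G$. A secondary subtlety is that Theorem \ref{elem} requires $C^*$ to be PAC, which is exactly Corollary \ref{K_is_PAC} applied to constants (or rather Theorem \ref{thm_kpac1} plus Ax--Roquette), so there is nothing new to prove. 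Once these compatibilities are in place, the rest is the routine translation argument via Remark \ref{rem_Cspace}, and the conclusion follows.

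\begin{proof}
By Lemma \ref{embeds}, embed $(K,\gal(K/C))$ into a model $(K^*,\overline{\sigma}^*)$ of $G-\tcf$; put $C^*:=(K^*)^G$. Since an $\mathcal{L}_G$-embedding maps constants to constants and restricts each $\sigma_k^*$ to $\sigma_k$ on $K$, we have $C\subseteq C^*$ and the restriction map $\gal(K^*/C^*)\to\gal(K/C)$ is compatible with the identifications of both Galois groups with $G$; as $[K^*:C^*]=e=[K:C]$ by Lemma \ref{strict}, it follows that $K^*=KC^*$ and this restriction is an isomorphism. By Corollary \ref{cor33} and the remark after Definition \ref{def318}, $(C^*,K^*)$ is a $G$-closed field, and by Theorem \ref{thm_kpac1} the field $C^*$ is ($K^*$-strongly, hence) PAC. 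Thus $(C,K)\subseteq(C^*,K^*)$ is an extension of $G$-closed fields with PAC constant fields, and Theorem \ref{elem} gives that $C\subseteq C^*$ is an elementary extension of pure fields.

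By Remark \ref{rem_Cspace}(3),(4) we may choose structure constants $\bar c,\bar d\subseteq C\subseteq C^*$ so that $f\colon K^*\to(C^*)^e$ is an isomorphism of $G$-transformal fields restricting to an isomorphism $K\to C^e$, and the inclusion $C^e\hookrightarrow(C^*)^e$ corresponds to $K\hookrightarrow K^*$. Every $\mathcal{L}_G$-operation on the $G$-transformal field $C^e$ is quantifier-free definable in the ring $C$ with parameters $\bar c,\bar d$ (and likewise on $(C^*)^e$ over $C^*$ with the same parameters). Hence each $\mathcal{L}_G$-sentence in the structure $(C^e,\oplus,\odot,(\tilde\sigma_k))$ translates into a ring sentence over $\bar c,\bar d$, and the elementarity of $C\subseteq C^*$ yields $(K,\gal(K/C))\preceq(K^*,\overline{\sigma}^*)$ as $\mathcal{L}_G$-structures. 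Since $(K^*,\overline{\sigma}^*)\models G-\tcf$ and $G-\tcf$ is model complete (Lemma \ref{mod_complete}), an elementary substructure of a model is again a model, so $(K,\gal(K/C))\models G-\tcf$.
\end{proof}
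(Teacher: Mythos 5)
Your proposal is correct and follows essentially the same route as the paper's own proof: embed into a model of $G-\tcf$ via Lemma \ref{embeds}, apply Theorem \ref{elem} to conclude that the extension of constant fields is elementary, and transfer back to the $G$-transformal setting through the coordinatization of Remark \ref{rem_Cspace}. Your verification of the hypotheses of Theorem \ref{elem} is more detailed than the paper's (which is a virtue), and the only superfluous step is invoking model completeness at the end — an elementary substructure of a model of any theory is automatically a model of that theory.
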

\begin{proof}
Consider $(C,K)$ as a $G$-transformal field and let $(C',K')$ be a model of $G-\tcf$ extending $(C,K)$, where $C'=(K')^G$. 
In particular, $(C',K')$ is a $G$-closed field. By Theorem \ref{elem}, the extension $C\subseteq C'$ is elementary. 

By Remark \ref{rem_Cspace}(3), the $G$-transformal field extension $K\subseteq K'$ is elementary, in particular $(K,\gal(K/C))$ is a model of $G-\tcf$.
\end{proof}

\begin{remark}
One could wonder whether for a $G$-closed field $(C,K)$, we get that $C$ is automatically PAC. Example \ref{cpxconj} shows that this is not the case for $G=\Zz/2\Zz$. The referee provided an argument showing that this is not the case for an arbitrary non-trivial (finite) group $G$.
\end{remark}

\begin{theorem}\label{kpacimpgcl}
Assume that $C$ is perfect and $C\subseteq K$ is a Galois extension with 
$\gal(K/C)\cong G$.
If $C$ is $K$-strongly PAC, then $(C,K)$ is a $G$-closed field.
\end{theorem}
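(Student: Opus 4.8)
### Proof Proposal

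The plan is to verify the three conditions in Definition~\ref{def318} for the pair $(C,K)$. Conditions (i) and (ii) are hypotheses, so the entire task reduces to showing condition (iii): that the restriction map $\pi:\mathcal{G}:=\gal(C^{\alg}/C)\to \gal(K/C)\cong G$ is a Frattini cover. Equivalently, by Lemma~\ref{alg_ext} and the Remark following it, I must show that there is \emph{no} proper closed subgroup $\mathcal{G}_0<\mathcal{G}$ with $\mathcal{G}_0\mathcal{N}=\mathcal{G}$, where $\mathcal{N}:=\gal(C^{\alg}/K)$. So suppose toward a contradiction that such a $\mathcal{G}_0$ exists. By the $(2)\Rightarrow(1)$ direction of Lemma~\ref{alg_ext}, this yields a \emph{non-trivial} algebraic field extension $K\subseteq K'$ (namely $K'=(C^{\alg})^{\mathcal{G}_0\cap\mathcal{N}}$, with $C'=(C^{\alg})^{\mathcal{G}_0}$) to which the $G$-action on $K$ extends, and moreover $K'=KC'$ with the restriction map $\gal(K'/C')\to\gal(K/C)$ an isomorphism — so $C\subsetneq C'$ as well, since $K\subsetneq K'=KC'$.

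The idea now is to use the $K$-strong PAC property of $C$ to produce a $C$-rational point where there should be none, contradicting the non-triviality of $K\subseteq K'$. First I would reduce to a primitive element: since $C$ is perfect and $C\subseteq K$ is Galois of degree $e$, write $K=C(\gamma)$ with $\gamma$ having minimal polynomial $f\in C[X]$ of degree $e$ which splits in $K$. Likewise, since $C'$ is an algebraic (indeed finite) extension of $C$ lying in $C^{\alg}$, pick a primitive element $\theta$ for $C'/C$ with minimal polynomial $h\in C[X]$, and note $C\subsetneq C'$ means $\deg h=:m>1$. The variety $V:=\spec\big(C[X]/(h)\big)\subseteq\Aa^1$ is a $C$-variety with no $C$-rational point (as $h$ has no root in $C$). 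By $K$-strong PAC, it suffices to check that $V$ is $K$-irreducible, i.e. that $h$ remains irreducible over $K$. Here is where the structure must be exploited: $C'$ and $K$ are linearly disjoint over $C$. Indeed $C'=(C^{\alg})^{\mathcal{G}_0}$ and $K=(C^{\alg})^{\mathcal{N}}$, and $\mathcal{G}_0\mathcal{N}=\mathcal{G}$ forces $C'\cap K=(C^{\alg})^{\mathcal{G}_0\mathcal{N}}=(C^{\alg})^{\mathcal{G}}=C$; since $C\subseteq K$ is Galois (hence $C'\cap K=C$ with one of them Galois over $C$ gives linear disjointness), $C'$ and $K$ are linearly disjoint over $C$. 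Therefore $[KC':K]=[C':C]=m$, so $h$ is irreducible over $K$ and $V$ is $K$-irreducible. By $K$-strong PAC, $V(C)\neq\emptyset$, contradicting $m>1$.

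The main obstacle I anticipate is making the linear-disjointness step airtight and, relatedly, pinning down exactly \emph{which} variety to feed into the $K$-strong PAC hypothesis — one wants the variety to be defined over $C$, to be $K$-irreducible, and to have no $C$-point, and the natural candidate is precisely the one coming from a primitive element of the putative proper extension $C'/C$ detected by Lemma~\ref{alg_ext}. The argument I sketched in fact shows more cleanly: $K$-strong PAC implies $C$ has \emph{no proper finite extension $C'$ that is linearly disjoint from $K$ over $C$} (feed in the zero-dimensional variety attached to $C'$); and the failure of the Frattini condition produces exactly such a $C'$. One technical point to double-check is that Lemma~\ref{alg_ext}$(2)\Rightarrow(1)$ genuinely delivers $C'\supsetneq C$ and not merely $K'\supsetneq K$ — but $K'=KC'$ together with $K'\neq K$ forces $C'\not\subseteq K$, hence $C'\neq C$, so this is fine. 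With these observations in place, the proof is short; the content is entirely in the translation between the Frattini/Galois condition and the existence of a $C$-variety violating $K$-strong PAC.
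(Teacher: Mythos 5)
Your proof is correct in substance and lands on the same punchline as the paper's: arguing the contrapositive, a failure of $G$-closedness produces a polynomial in $C[X]$ of degree $>1$ that is irreducible over $K$, and the attached zero-dimensional $C$-variety has no $C$-point, contradicting $K$-strong PAC-ness. The difference lies in how irreducibility over $K$ is obtained. The paper works directly with the $G$-transformal extension $(C,K)\subset(C',K')$ from Lemma \ref{alg_ext}(1): it takes $a\in C'\setminus C$, lets $f$ be its minimal polynomial over $K$, and observes that $\sigma_g(f)$ is again a minimal polynomial of $a$ (because $\sigma_g(a)=a$), so $f$ is $G$-invariant and hence already lies in $C[X]$; irreducibility over $K$ is then automatic and no Galois-group bookkeeping is needed. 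You instead pass to the group-theoretic condition of Lemma \ref{alg_ext}(2), derive $C'\cap K=C$ from $\mathcal{G}_0\mathcal{N}=\mathcal{G}$, and invoke linear disjointness (valid since $K/C$ is Galois) to lift irreducibility of a minimal polynomial over $C$ up to $K$. Both routes are short and valid; the paper's has the small advantages of avoiding the linear-disjointness verification and of directly refuting the formally weaker condition (3) of Theorem \ref{equiv}, which is what the proof of that theorem actually uses.

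One small inaccuracy to repair: $\mathcal{G}_0$ is only assumed closed, not open, so $C'=(C^{\alg})^{\mathcal{G}_0}$ need not be finite over $C$ and need not admit a primitive element. This does not damage the argument: any $\theta\in C'\setminus C$ generates a finite subextension $C(\theta)$, which is still linearly disjoint from $K$ over $C$, and its minimal polynomial over $C$ plays exactly the role you intended.
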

\begin{proof}
Suppose $(C,K)$ is not a $G$-closed field. Then there is a non-trivial algebraic $G$-transformal field extension $(C,K)\subset (C',K')$. Take $a\in C'\setminus C$ and let $f\in K[X]$ be the minimal polynomial of $a$ over $K$. Then, for any $g \in G$ we have
$$0=\sigma_g(f(a))=\sigma_g(f)(\sigma_g(a))=\sigma_g(f)(a).$$
Hence $\sigma_g(f)$ is a minimal polynomial of $a$ over $K$ as well, so $\sigma_g(f)=f$. Therefore $f\in C[X]$, $f$ is irreducible over $K$ and $\deg(f)>1$. Hence $C$ is not $K$-strongly PAC (and even does not satisfy $(3)$ in the theorem below).
\end{proof}

\begin{theorem}\label{equiv}
Suppose that $C\subseteq K$ is a finite Galois extension of perfect fields such that $\gal(K/C)\cong G$. The following are equivalent.
\begin{enumerate}
\item The $G$-tranformal field $(K,\gal(K/C))$ is a model of $G-\tcf$.
\item The field $C$ is $K$-strongly PAC.
\item The field $C$ is PAC and if $f\in C[X]$ is irreducible over $K$, then $\deg(f)=1$.
\item The field $C$ is PAC and $(C,K)$ is a $G$-closed field.
\end{enumerate}
\end{theorem}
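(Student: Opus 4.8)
The plan is to prove Theorem \ref{equiv} by establishing a cycle of implications among the four conditions, exploiting the results already assembled in this subsection. I would organize it as $(1)\Rightarrow(2)\Rightarrow(4)\Rightarrow(1)$ together with the side equivalence $(2)\Leftrightarrow(3)$, which reduces the whole theorem to the few building blocks we already have in hand.

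For $(1)\Rightarrow(2)$ I would simply invoke Theorem \ref{thm_kpac1}: if $(K,\gal(K/C))\models G-\tcf$, then its field of constants $K^G$, which by hypothesis equals $C$, is $K$-strongly PAC. For $(2)\Rightarrow(4)$ the content is twofold: that $C$ is PAC, which follows from the Remark after the definition of $K$-strongly PAC (or from Corollary \ref{K_is_PAC} applied to $C$ itself, as an algebraic — indeed trivial — subextension of $K$); and that $(C,K)$ is a $G$-closed field, which is precisely Theorem \ref{kpacimpgcl} under the standing hypothesis that $C$ is perfect and $C\subseteq K$ is Galois with group $G$. For $(4)\Rightarrow(1)$ I would quote Corollary \ref{gcloimpgtcf}: a $G$-closed field $(C,K)$ with $C$ PAC has $(K,\gal(K/C))\models G-\tcf$. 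This closes the main cycle.

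It remains to handle $(3)$. The implication $(2)\Rightarrow(3)$ is essentially immediate: $K$-strongly PAC implies PAC, and if $f\in C[X]$ were irreducible over $K$ of degree $>1$, then $\spec(K[X]/(f))$ would be a $K$-irreducible (indeed $K$-integral) $C$-variety with no $C$-point, contradicting $K$-strong PAC-ness — this is exactly the observation already made at the end of the proof of Theorem \ref{kpacimpgcl}. For $(3)\Rightarrow(4)$ I would argue contrapositively: if $(C,K)$ is not $G$-closed, then by Lemma \ref{alg_ext} there is a non-trivial algebraic $G$-transformal extension $(C,K)\subset(C',K')$, and the argument in the proof of Theorem \ref{kpacimpgcl} produces $a\in C'\setminus C$ whose minimal polynomial over $K$ lies in $C[X]$, is irreducible over $K$, and has degree $>1$, contradicting $(3)$. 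Together with $C$ being PAC (part of the hypothesis of $(3)$), this gives $(4)$, and hence we have $(2)\Leftrightarrow(3)$ and the full equivalence.

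The only mild subtlety — the "main obstacle," such as it is — is bookkeeping: one must make sure the standing hypotheses of Theorem \ref{equiv} (that $C$ and $K$ are perfect, that $C\subseteq K$ is finite Galois with group $G$) match exactly the hypotheses under which Theorems \ref{thm_kpac1}, \ref{kpacimpgcl}, \ref{gcloimpgtcf} and Corollary \ref{K_is_PAC} were proved, so that each citation is legitimate; in particular one should note that perfectness of $C$ plus finiteness and Galois-ness of $C\subseteq K$ place us in the set-up of Subsection \ref{subsec_alg_ext}, which is what Lemma \ref{alg_ext} and the notion of $G$-closed field require. Beyond this, everything is an assembly of already-proved statements, so the proof itself is short.

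\begin{proof}
We prove $(1)\Rightarrow(2)\Rightarrow(4)\Rightarrow(1)$ and $(2)\Leftrightarrow(3)$.

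$(1)\Rightarrow(2)$: This is Theorem \ref{thm_kpac1}, since $K^G=C$.

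$(2)\Rightarrow(4)$: A $K$-strongly PAC field is PAC, so $C$ is PAC. Since $C$ is perfect and $C\subseteq K$ is Galois with group $G$, Theorem \ref{kpacimpgcl} gives that $(C,K)$ is a $G$-closed field.

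$(4)\Rightarrow(1)$: This is Corollary \ref{gcloimpgtcf}.

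$(2)\Rightarrow(3)$: Again $C$ is PAC. If $f\in C[X]$ were irreducible over $K$ with $\deg(f)>1$, then $\spec\big(K[X]/(f)\big)$ would be a $K$-irreducible $C$-variety with no $C$-rational point, contradicting that $C$ is $K$-strongly PAC.

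$(3)\Rightarrow(4)$: Suppose $(C,K)$ is not a $G$-closed field. Since $C$ is perfect and $C\subseteq K$ is Galois with group $G$, by Lemma \ref{alg_ext} there is a non-trivial algebraic $G$-transformal field extension $(C,K)\subset (C',K')$. Pick $a\in C'\setminus C$ and let $f\in K[X]$ be its minimal polynomial over $K$. For each $g\in G$,
$$0=\sigma_g(f(a))=\sigma_g(f)(\sigma_g(a))=\sigma_g(f)(a),$$
so $\sigma_g(f)$ is also a minimal polynomial of $a$ over $K$ and hence $\sigma_g(f)=f$. Thus $f\in C[X]$, $f$ is irreducible over $K$, and $\deg(f)=[K(a):K]>1$, contradicting $(3)$. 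Therefore $(C,K)$ is a $G$-closed field, and since $C$ is PAC by hypothesis, $(4)$ holds.
\end{proof}
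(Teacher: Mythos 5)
Your proposal is correct and uses exactly the same ingredients as the paper's proof (Theorem \ref{thm_kpac1}/Corollary \ref{4_to_2} for $(1)\Rightarrow(2)$, the argument from the proof of Theorem \ref{kpacimpgcl} for getting $G$-closedness from $(3)$, and Corollary \ref{gcloimpgtcf} for $(4)\Rightarrow(1)$); the only difference is the trivial reorganization of the cycle as $(1)\Rightarrow(2)\Rightarrow(4)\Rightarrow(1)$ plus $(2)\Leftrightarrow(3)$ instead of the paper's straight chain $(1)\Rightarrow(2)\Rightarrow(3)\Rightarrow(4)\Rightarrow(1)$. Your citation bookkeeping is accurate, so nothing further is needed.
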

\begin{proof}
By Corollary \ref{4_to_2}, $(1)$ implies $(2)$. Clearly $(2)$ implies $(3)$. By the proof of Theorem \ref{kpacimpgcl}, $(3)$ implies $(4)$. By Corollary \ref{gcloimpgtcf}, $(4)$ implies $(1)$.
\end{proof}

\begin{remark}
The referee has provided a proof of the following result: if $(C,K)$ is a $G$-closed field and $V$ is a $C$-irreducible variety, then $V\times_{\spec(C)}\spec(K)$ is a $K$-irreducible variety. In particular, if in addition $C$ is PAC, then any $V$ as above is absolutely irreducible. This gives a purely algebraic proof of the implication (4)$\Rightarrow$(2) in Theorem \ref{equiv} above.
\end{remark}

\begin{cor}\label{ex39bis}
The $\mathbb{Z}/n\mathbb{Z}$-transformal field mentioned at the end of Example \ref{ex39} is a model of $\mathbb{Z}/n\mathbb{Z}-\tcf$.
\end{cor}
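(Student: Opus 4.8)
\emph{Proof sketch.} The plan is to recognise the field at the end of Example \ref{ex39} as the top of the pair $(C',K')$ coming from the proof of $(2)\Rightarrow(1)$ of Lemma \ref{alg_ext}, namely $C':=(\Ff_q^{\alg})^{\mathcal{G}_0}$ (its field of constants) and $K':=(\Ff_q^{\alg})^{\mathcal{G}_0\cap\mathcal{N}}$ with its induced $\gal(K'/C')$-action, and then to invoke Corollary \ref{gcloimpgtcf}. So it is enough to establish two things: that $(C',K')$ is a $G$-closed field, and that $C'$ is PAC.

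For the first, I would first do the Galois bookkeeping. Since $\widehat{\Zz}$ is abelian, every closed subgroup is normal, so $C'\subseteq K'\subseteq\Ff_q^{\alg}$ are Galois with $\gal(\Ff_q^{\alg}/C')=\mathcal{G}_0=\Zz_{p_1}\times\cdots\times\Zz_{p_k}$, $\gal(\Ff_q^{\alg}/K')=\mathcal{G}_0\cap\mathcal{N}=p_1^{\alpha_1}\Zz_{p_1}\times\cdots\times p_k^{\alpha_k}\Zz_{p_k}$, hence $\gal(K'/C')\cong\mathcal{G}_0/(\mathcal{G}_0\cap\mathcal{N})\cong\prod_{i=1}^{k}\Zz/p_i^{\alpha_i}\Zz\cong\Zz/n\Zz$ by the Chinese Remainder Theorem; moreover $C'$ is perfect, being algebraic over the finite field $\Ff_q$. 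Thus items (i), (ii) of Definition \ref{def318} hold, and the content is item (iii): the restriction $\mathcal{G}_0\to\gal(K'/C')$ is a Frattini cover. To see this, let $\mathcal{H}$ be a closed subgroup of $\mathcal{G}_0$ with $\mathcal{H}(\mathcal{G}_0\cap\mathcal{N})=\mathcal{G}_0$; then $\mathcal{H}\mathcal{N}$ is a closed subgroup of $\mathcal{G}$ containing $\mathcal{G}_0$, so $\mathcal{H}\mathcal{N}=\mathcal{G}_0\mathcal{N}=\mathcal{G}$, and the minimality of $\mathcal{G}_0$ already observed in Example \ref{ex39} (it is the smallest closed subgroup of $\mathcal{G}$ whose product with $\mathcal{N}$ is $\mathcal{G}$) forces $\mathcal{G}_0\subseteq\mathcal{H}$, i.e. $\mathcal{H}=\mathcal{G}_0$. (Equivalently: $\mathcal{G}_0\cap\mathcal{N}=\prod_i p_i^{\alpha_i}\Zz_{p_i}$ lies in the Frattini subgroup $\prod_i p_i\Zz_{p_i}$ of $\mathcal{G}_0$, so the conclusion is immediate from the defining property of the Frattini subgroup.) Hence $(C',K')$ is a $G$-closed field.

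For the second point: by the computation above $\gal(\Ff_q^{\alg}/C')=\mathcal{G}_0$ has infinite index in $\widehat{\Zz}$ (there are infinitely many primes outside $\{p_1,\dots,p_k\}$), so $C'$ is an infinite algebraic extension of $\Ff_q$, and every infinite algebraic extension of a finite field is PAC — this is classical and follows from the Lang--Weil estimates (see also \cite{FrJa}). Applying Corollary \ref{gcloimpgtcf} to the $G$-closed field $(C',K')$ with $C'$ PAC then yields $(K',\gal(K'/C'))\models G-\tcf$, which is the assertion (the case $n=1$ being trivial, as then $G$ is trivial and $K'=C'=\Ff_q^{\alg}$). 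I expect the only delicate step to be the verification of the Frattini cover condition, and as indicated it reduces at once to the minimality property of $\mathcal{G}_0$ isolated in Example \ref{ex39}.
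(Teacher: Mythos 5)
Your proposal is correct and follows essentially the same route as the paper: $C'$ is PAC as an infinite algebraic extension of a finite field, $(C',K')$ is $G$-closed via the minimality of $\mathcal{G}_0$ observed in Example \ref{ex39}, and then one applies Corollary \ref{gcloimpgtcf} (equivalently, implication (4)$\Rightarrow$(1) of Theorem \ref{equiv}). You merely spell out in more detail the Galois bookkeeping and the reduction of the Frattini-cover condition to that minimality (or to $\mathcal{G}_0\cap\mathcal{N}$ lying in the Frattini subgroup of $\mathcal{G}_0$), which the paper leaves implicit.
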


\begin{proof}
By \cite[Corollary 11.2.4]{FrJa} any infinite algebraic extension of a finite field is a PAC field. Obviously, the field extension $\mathbb{F}_q\subseteq (\mathbb{F}_q^{\alg})^{\mathcal{G}_0\cap\mathcal{N}}$ is infinite. Because $\mathcal{G}_0$ is the smallest subgroup satisfying $\mathcal{G}_0\mathcal{N}=\mathcal{G}$, the considered field is $G$-closed, hence it satisfies item (4) of Theorem \ref{equiv}.
\end{proof}

\begin{remark}
The following seems to be an interesting question: which $G$-fields $(K,\bar{\sigma})$ have an \emph{algebraic} (in the field sense) extension to a model of $G-\tcf$?
\begin{enumerate}
\item Corollary \ref{ex39bis} above shows that the $\Zz/n\Zz$-field $(\Ff_{p^n},\Fr)$ has an algebraic (in the field sense) extension to a model of $\Zz/n\Zz-\tcf$.

\item The referee has pointed out to us an argument showing that if $K$ is countable Hilbertian (see Chapter 12 in \cite{FrJa}), then any $G$-field $(K,\bar{\sigma})$ has an algebraic extension to a model of $G-\tcf$. This case is in a way orthogonal to the one considered in the previous item, since finite fields are ``very'' non-Hilbertian.

\item Example \ref{cpxconj} shows that the $\Zz/2\Zz$-field $(\Cc,\sigma)$, where $\sigma$ is the complex conjugation, does not have an algebraic extension to a model of $G-\tcf$.
\end{enumerate}
\end{remark}

\begin{remark}
Using our results, we can quite easily create ``geometric representability'' situations as in \cite[Def. 1.2(1)]{bech}. If $(K,\overline{\sigma})$ is a model of $G-\tcf$, then we put a natural $G$-structure (as in the paragraph after Remark \ref{linijka1}) on $K(\bar{X})$ (where $|\bar{X}|=e=|G|$), and we obtain a $G$-transformal field extension. By Lemma \ref{embeds}, the $G$-difference field $K(\bar{X})$ has a $G$-difference extension to a model $(M,\overline{\sigma})$ of $G-\tcf$. By Lemma \ref{mod_complete}, the $G$-transformal extension $(K,\overline{\sigma})\subset (M,\overline{\sigma})$ is elementary. Hence the extension of bounded PAC fields (by Corollary \ref{4_to_2}) $K\subset M$ is elementary as well. Using Galois theory, we can find an intermediate $G$-transformal field $K\subseteq L\subseteq K(\bar{X})$ such that $\gal(K(\bar{X})/L)\cong G$. By \cite[Theorem 1.8]{bech}, the group of \emph{$\theo(M)$-elementary} (i.e. preserving the set of all $K(\bar{X})$-sentences which are true in $M$)  automorphisms of $K(\bar{X})$ over $L$ is abelian. Therefore, we can easily construct examples when the usual Galois group is very different from the group of elementary automorphisms.
\end{remark}

\subsection{Absolute Galois groups and Frattini covers}\label{secufc}
In this subsection, we will give a description of the absolute Galois group of a model of $G-\tcf$ using Sj\"{o}gren's results from \cite{sjogren}.

The following is a special case of \cite[Proposition 22.10.5]{FrJa}, but we provide a quick proof.

\begin{lemma}\label{nowy1}
Assume that $C\subseteq K$ is a finite Galois extension of degree $e$ and that $C$ is $K$-strongly PAC. Then $C$  has no finite extensions of degree relatively prime to $e$.
\end{lemma}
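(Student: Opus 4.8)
The plan is to use the $K$-strongly PAC property to control finite extensions of $C$. Suppose $C\subseteq D$ is a finite extension of degree $d$ with $\gcd(d,e)=1$; since $C$ is perfect we may take $D\subseteq C^{\mathrm{alg}}$, and we want to show $d=1$. First I would replace $D$ by its Galois closure over $C$ if convenient, or better, work directly: consider the compositum $KD$ inside $C^{\mathrm{alg}}$. Since $[K:C]=e$ and $[D:C]=d$ are coprime, the extensions $K/C$ and $D/C$ are linearly disjoint over $C$, so $[KD:K]=d$ and $[KD:D]=e$, and $KD/D$ is Galois with $\gal(KD/D)\cong\gal(K/C)\cong G$.

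The key step is to produce from $D$ a $K$-irreducible $C$-variety with no $C$-point, contradicting $K$-strong PAC unless $d=1$. Take a primitive element $a$ for $D/C$, so $D=C(a)$, and let $f\in C[X]$ be its minimal polynomial over $C$, with $\deg f=d$. Because $K/C$ and $D/C$ are linearly disjoint, $f$ remains irreducible over $K$, i.e.\ $f$ is $K$-irreducible. Thus the affine variety $V=\spec(C[X]/(f))\subseteq\Aa^1$ is a $C$-variety which is $K$-irreducible. If $C$ is $K$-strongly PAC, $V$ has a $C$-rational point, i.e.\ $f$ has a root in $C$; since $f$ is irreducible of degree $d$ over $C$, this forces $d=1$, hence $D=C$. (This is really the same move as in the proof of Theorem~\ref{kpacimpgcl} and the equivalence (2)$\Leftrightarrow$(3) in Theorem~\ref{equiv}, applied to an arbitrary finite extension $D$ rather than one coming from a $G$-action.)

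I expect the only real point requiring care is the linear disjointness claim and the resulting $K$-irreducibility of $f$: one needs $[K:C]$ and $[D:C]$ coprime to conclude that $K\cap D=C$ inside $C^{\mathrm{alg}}$ and that $[KD:C]=ed$, from which irreducibility of $f$ over $K$ follows (if $f$ factored over $K$, some proper factor would have degree $<d$ but would generate a subextension of $KD/K$ of degree dividing both $d$ and... more precisely $[K(a):K]$ divides $[D:C]=d$ and equals $[KD:K]/[\text{something}]$; the clean way is: $[K(a):K]=[KD:K]=d$ because $KD=K(a)$ and $[KD:C]=\mathrm{lcm}$ considerations give $ed$). Everything else — existence of a primitive element (both fields perfect, hence separable extensions), reducing to $D$ inside $C^{\mathrm{alg}}$, invoking the definition of $K$-strongly PAC with the variety $V_0=V$ and $V_1=V\times_{\spec C}\spec K$ — is routine. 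So the structure is: (i) set up coprime-degree linear disjointness, (ii) build the one-variable $K$-irreducible $C$-variety from a primitive element, (iii) apply $K$-strong PAC to get a $C$-root, (iv) conclude $d=1$.
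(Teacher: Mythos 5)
Your proposal is correct and is essentially the paper's own argument, which is a one-line observation: an irreducible $f\in C[X]$ of degree coprime to $e$ remains irreducible over $K$ (by the coprime-degree counting you spell out), so $K$-strong PACness forces $\deg f=1$. Your write-up merely makes explicit the linear-disjointness bookkeeping that the paper leaves implicit.
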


\begin{proof}
It is enough to notice (using the $K$-strongly PAC assumption) that if $f\in C[X]$ is irreducible of degree relatively prime to $e$, then it remains irreducible over $K$.
\end{proof}

We recall one more definition.

\begin{definition}
A field $F$ is \emph{quasi-finite}, if it is a perfect field such that
for each natural number $n>0$ there exists
a unique extension $F\subseteq F_n$ (in a fixed field $F^{\alg}$) of degree $n$ and
$$F^{\alg}=\bigcup\limits_{n>0}F_n.$$
\end{definition}

By a theorem of Ax (see \cite{ax1}), pseudo-finite fields can be characterized as PAC quasi-finite fields.

\begin{cor}
Let $(K,\overline{\sigma})$ be a model of $G-\tcf$ and $C=K^G$. Then neither $K$ nor $C$ is quasi-finite (hence they are also not pseudo-finite).
\end{cor}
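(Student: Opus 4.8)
The plan is to derive this from Lemma \ref{nowy1} together with the facts already established about models of $G-\tcf$. First I would recall that by Theorem \ref{thm_kpac1} the field $C=K^G$ is $K$-strongly PAC, and the extension $C\subseteq K$ is finite Galois of degree $e=|G|$ (it is strict by Lemma \ref{strict}, and separable by Corollary \ref{cor33}). If $e=1$ then $G$ is trivial and $K=C$ is algebraically closed, which is not quasi-finite (the algebraic closure has no proper finite extensions, violating the existence-of-degree-$n$-extension clause for $n>1$); so I may assume $e>1$.

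Next I would handle $C$ directly: apply Lemma \ref{nowy1} to conclude that $C$ has no finite extension of degree relatively prime to $e$. In particular, picking any prime $\ell$ not dividing $e$, there is \emph{no} extension of $C$ of degree $\ell$ inside $C^{\alg}$, whereas a quasi-finite field by definition has (a unique, hence at least one) extension of every degree $n>0$. This contradicts quasi-finiteness of $C$. Equivalently, one can phrase it via absolute Galois groups: $\mathcal{G}(C)$ surjects onto $G$, a nontrivial finite group, while $\widehat{\Zz}$ has a unique quotient of each order and its finite quotients are cyclic; the real point is just that $\mathcal{G}(C)$ cannot be $\widehat{\Zz}$ because $\widehat{\Zz}$ has closed subgroups of every finite index, including index $\ell$.

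Then for $K$ itself: $K$ is a finite (degree $e$) separable extension of $C$, and an extension of degree $\ell$ of $C$ with $\gcd(\ell,e)=1$ would, when composed with $K$, give an extension of $K$ — but more to the point, if $K$ were quasi-finite then $\mathcal{G}(K)\cong\widehat{\Zz}$, and since $[K:C]=e<\infty$ the group $\mathcal{G}(K)$ is an open (finite-index) subgroup of $\mathcal{G}(C)$; but every open subgroup of $\widehat{\Zz}$ is again isomorphic to $\widehat{\Zz}$, forcing $\mathcal{G}(C)$ to be an extension of $\widehat{\Zz}$ by a finite group acting on it, and in fact a cleaner route is: $\mathcal{G}(C)$ contains $\mathcal{G}(K)\cong\widehat{\Zz}$ as an open normal subgroup with cyclic quotient $G$ — but then $\mathcal{G}(C)$ is procyclic-by-cyclic, still finitely generated, and one checks it still has subgroups of index $\ell$ for $\ell\nmid e$, again contradicting Lemma \ref{nowy1}. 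Alternatively and most simply: $K$ being a model of $G-\tcf$ is not separably closed by Theorem \ref{non-sep}, is PAC by Corollary \ref{K_is_PAC}, and if it were quasi-finite it would be pseudo-finite, hence (by Ax) would have absolute Galois group $\widehat{\Zz}$; but $\mathcal{G}(K)=\mathcal{N}=\gal(C^{\alg}/K)$ has the finitely-generated-non-procyclic Frattini-cover structure from Theorem \ref{C_is_bounded}, and $\widehat{\Zz}$ is not a Frattini cover situation of this shape — concretely, $\mathcal{G}(K)$ being open of finite index $e>1$ in $\mathcal{G}(C)$ while $\mathcal{G}(C)\twoheadrightarrow G$, together with Lemma \ref{nowy1} applied to $C$, already rules out $\mathcal{G}(C)\cong\widehat{\Zz}$, and then a finite-index subgroup argument rules out $\mathcal{G}(K)\cong\widehat{\Zz}$ as well. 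The parenthetical "hence also not pseudo-finite" is then immediate from the Ax characterization quoted just before the statement: pseudo-finite $=$ PAC $+$ quasi-finite.

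The main obstacle is purely bookkeeping: making sure the degree-relatively-prime-to-$e$ obstruction is correctly transported between $C$ and $K$ via the finite extension $C\subseteq K$, and being careful that "no extension of degree $\ell$" (coprime to $e$) genuinely contradicts the \emph{existence} half of quasi-finiteness rather than only the uniqueness half. I expect the cleanest write-up to treat $C$ first (direct contradiction with Lemma \ref{nowy1}) and then note that $K$ has the same defect because any prime $\ell$ coprime to $e$ is also coprime to $[K:C]\cdot[\text{anything}]$ in the relevant sense, or simply because $C\subseteq K$ finite forces $\mathcal{G}(K)$ open in $\mathcal{G}(C)$ and an open subgroup of $\widehat\Zz$ is $\widehat\Zz$, which would force $C$ quasi-finite too — a contradiction already obtained.
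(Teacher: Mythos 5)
Your treatment of $C$ is exactly the paper's argument (the paper's proof is the one-liner ``it follows from Theorem \ref{equiv} and Lemma \ref{nowy1}''): $C$ is $K$-strongly PAC, so by Lemma \ref{nowy1} it has no extension of degree a prime $\ell\nmid e$, while quasi-finiteness demands an extension of every degree; the trivial-$G$ case gives an algebraically closed field, which is likewise not quasi-finite. The parenthetical about pseudo-finiteness is correctly reduced to Ax's characterization.

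The only place where genuine work is needed is $K$, and that is where your write-up has a gap. Two of the routes you offer do not work as stated: transporting a degree-$\ell$ extension of $C$ ``up'' to $K$ is vacuous because $C$ has no such extension, and your closing inference ``an open subgroup of $\widehat{\Zz}$ is $\widehat{\Zz}$, which would force $C$ quasi-finite too'' runs in the wrong direction --- knowing that the open subgroup $\mathcal{G}(K)$ of $\mathcal{G}(C)$ is isomorphic to $\widehat{\Zz}$ says nothing about $\mathcal{G}(C)$ itself being $\widehat{\Zz}$. The route that does work is precisely the one you leave at ``one checks'', and it needs an argument: if $\mathcal{G}(K)\cong\widehat{\Zz}$, take a prime $\ell\nmid e$ and let $H$ be the unique open subgroup of $\mathcal{G}(K)$ of index $\ell$; it is characteristic in $\mathcal{G}(K)$, hence normal in $\mathcal{G}(C)$, and $\mathcal{G}(C)/H$ is a finite group of order $e\ell$ containing a normal subgroup of order $\ell$ coprime to its index, so by Schur--Zassenhaus that subgroup has a complement of order $e$, i.e.\ $\mathcal{G}(C)$ has an open subgroup of index $\ell$. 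This yields an extension of $C$ of degree $\ell$ coprime to $e$, contradicting Lemma \ref{nowy1}. (Alternatively, one may invoke Theorem \ref{sjor}: $\mathcal{G}(K)$ is the kernel of the universal Frattini cover $\widetilde{G}\to G$, which is a pro-$\pi$ group for $\pi$ the set of primes dividing $e$, so it cannot be $\widehat{\Zz}$.) Note that Cauchy's theorem alone is not enough here: the obvious subgroup of order $\ell$ has index $e$, which is not coprime to $e$, so you really do need the complement. With that step supplied, your proof is complete and is the argument the paper's terse proof presupposes.
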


\begin{proof}
It follows from Theorem \ref{equiv} and Lemma \ref{nowy1}.
\end{proof}

We recall below the definition of a Frattini cover.

\begin{definition}\label{frattini.def}
Let $H,N$ be profinite groups and $\pi: H\to N$ be a continuous homomorphism.
The mapping $\pi$ is called \emph{Frattini cover} if for each closed subgroup $H_0$ of $H$, the condition $\pi(H_0)=N$ implies that $H_0=H$.
\end{definition}

The notions of a projective profinite group and a Frattini cover are coming together (similarly as in Theorem \ref{elem}) in the next classical result.

\begin{theorem}\label{thmuft}\cite[Prop. 22.6.1]{FrJa}
Each profinite group $H$ has a Frattini cover $\widetilde{H}\to H$ which is unique up to isomorphism (and called the \emph{universal Frattini cover}) and satisfying
the following equivalent conditions.
\begin{enumerate}
\item The map $\widetilde{H}\to H$ is a projective Frattini cover of $H$.
\item The map $\widetilde{H}\to H$ is the largest Frattini cover of $H$.
\item The map $\widetilde{H}\to H$ is the smallest projective cover of $H$.
\end{enumerate}
\end{theorem}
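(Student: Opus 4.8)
The statement is the Frattini-cover theorem \cite[Prop. 22.6.1]{FrJa}, which is a standard result, so I would reproduce the classical proof. The plan is to first construct \emph{some} projective Frattini cover, then prove it satisfies the three equivalences, and finally deduce uniqueness. For the construction, I would take a projective profinite group $P$ together with an epimorphism $P \to H$ (such a pair exists: realize $H$ as a quotient of a free profinite group, and free profinite groups are projective); among all closed subgroups $H_0 \leqslant P$ with $H_0 \to H$ still surjective, pick a minimal one by Zorn's lemma (the intersection of a chain of such subgroups still surjects onto $H$ by a compactness argument). Call it $\widetilde H$. Minimality gives exactly the Frattini property: if $H_1 \leqslant \widetilde H$ is closed with $H_1 \to H$ onto, then $H_1 = \widetilde H$ by minimality. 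And $\widetilde H$ is projective because a closed subgroup of a projective profinite group is projective (this is \cite[Prop. 22.4.7]{FrJa} or similar). So $\widetilde H \to H$ is a projective Frattini cover, giving existence and condition (1).

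\textbf{Equivalence of the three conditions.} I would argue (1) $\Rightarrow$ (2): if $\varphi: H' \to H$ is any Frattini cover, lift the projection $\widetilde H \to H$ through $\varphi$ using projectivity of $\widetilde H$ to get $\psi: \widetilde H \to H'$ with $\varphi \psi$ equal to the projection; then $\psi(\widetilde H)$ surjects onto $H$, so by the Frattini property of $\varphi$ we get $\psi(\widetilde H) = H'$, i.e. $\psi$ is onto, exhibiting $\widetilde H \to H$ as ``larger'' than $H' \to H$. For (2) $\Rightarrow$ (3): any projective cover $P \to H$ factors (restrict as in the construction) through a Frattini subcover, which by (2) is dominated by $\widetilde H \to H$; conversely $\widetilde H \to H$ itself is projective, so it is the smallest projective cover. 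For (3) $\Rightarrow$ (1) one checks directly that the smallest projective cover must be Frattini (if it had a proper closed subgroup surjecting onto $H$, that subgroup is again projective — contradicting minimality) and is projective by hypothesis. The implications can be arranged in whatever cycle is cleanest; the content is just chasing lifting diagrams and using that closed subgroups of projective groups are projective.

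\textbf{Uniqueness.} Given two projective Frattini covers $\varphi_i: H_i \to H$, use projectivity of $H_1$ to lift $\varphi_1$ through $\varphi_2$, obtaining $\psi: H_1 \to H_2$ over $H$; the Frattini property of $\varphi_2$ forces $\psi$ onto. Symmetrically get $\theta: H_2 \to H_1$ onto. Then $\theta\psi: H_1 \to H_1$ is a surjective endomorphism of a profinite group over $H$; a surjective endomorphism of a profinite group is an automorphism (a standard compactness fact, e.g. it is injective because the images of the iterates stabilize), hence $\psi$ is injective, so $\psi$ is an isomorphism.

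\textbf{Main obstacle.} Since this is a cited classical theorem, there is no genuine obstacle — the only delicate points are the ``soft'' profinite-group facts that are used as black boxes: that free profinite groups (hence some cover of $H$) are projective, that closed subgroups of projective profinite groups are projective, that a descending chain of closed subgroups each surjecting onto a fixed quotient has intersection still surjecting (inverse-limit/compactness argument), and that a surjective continuous endomorphism of a profinite group is an automorphism. If one wanted a self-contained account rather than a quote, verifying these lemmas — particularly ``closed subgroups of projective groups are projective'' — would be the part requiring the most care; but as all of this is available in \cite{FrJa}, I would simply cite it and keep the proof to the diagram-chasing sketch above.
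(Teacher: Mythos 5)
The paper does not prove this statement at all: it is quoted directly from Fried--Jarden (\cite[Prop.\ 22.6.1]{FrJa}), so there is no in-paper argument to compare against. Your reconstruction is the standard proof from that source --- realize $H$ as a quotient of a projective (e.g.\ free) profinite group, pass to a Zorn-minimal closed subgroup still surjecting onto $H$, use that closed subgroups of projective profinite groups are projective, and then chase lifting diagrams together with the Frattini property for the equivalences and for uniqueness --- and it is essentially correct. The one step worth tightening is your $(2)\Rightarrow(3)$: rather than routing a projective cover $P\to H$ through a Frattini subcover of $P$ (which risks getting the direction of domination backwards), simply lift $P\to H$ through $\widetilde H\to H$ using projectivity of $P$ and note that the image of the lift surjects onto $H$, so the Frattini property of $\widetilde H\to H$ forces the lift to be onto.
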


For any profinite group $H$, we denote by $\tilde{H}\to H$ its universal Frattini cover.

\begin{example}
It is easy to see that the projection map $\Zz_p\to \Zz/p\Zz$ is the universal Frattini cover. From Example \ref{ex39}, we know that $\Zz_p$ is also isomorphic to the absolute Galois group of the constant field of a model of $\Zz/p\Zz-\tcf$. We will see below a natural generalization of this observation.
\end{example}

We quote now the results of Sj\"{o}gren regarding the absolute Galois groups of models of $G-\tcf$. We provide a quick proof which uses our previous results (however, we point out to Remark \ref{remsjo}).

\begin{theorem}\label{sjor}\cite[Theorems 5. and 6.]{sjogren}
Suppose $(K,\bar{\sigma})\models G-\tcf$ and $C=K^{\bar{\sigma}}$. Then we have the following.
\begin{enumerate}
\item The natural map $\gal(C)\to G$ is the universal Frattini cover of $G$.
\item There is an isomorphism
$$\gal(K)\cong \ker(\widetilde{G}\to G).$$
\end{enumerate}
\end{theorem}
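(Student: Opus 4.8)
The plan is to reduce both statements to the characterizations already available: that $C$ is a bounded PAC field with $\gal(C)$ projective, and that $(C,K)$ is a $G$-closed field, i.e. the restriction map $\pi:\gal(C)\to \gal(K/C)\cong G$ is a Frattini cover. For part (1), by Theorem \ref{equiv} we know $(C,K)$ is $G$-closed, so $\pi:\gal(C)\to G$ is already a Frattini cover. By Corollary \ref{K_is_PAC} together with the Ax--Roquette result, $K$ is PAC, hence $C$ is PAC, and by \cite[Theorem 11.6.2]{FrJa} (as used in the proof of Theorem \ref{elem}) $\gal(C)$ is projective. A projective profinite group which is a Frattini cover of $G$ must be \emph{the} universal Frattini cover of $G$: indeed, by Theorem \ref{thmuft}, $\widetilde{G}\to G$ is characterized as the projective Frattini cover of $G$, and it is unique up to isomorphism. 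So I would simply invoke: $\pi:\gal(C)\to G$ is a Frattini cover and $\gal(C)$ is projective, therefore $\pi$ is isomorphic (over $G$) to the universal Frattini cover $\widetilde{G}\to G$. This gives (1).

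For part (2), the key point is that $\gal(K)$ is the kernel of $\pi$. This is pure Galois theory once the set-up of Subsection \ref{subsec_alg_ext} is in place: $K=(C^{\alg})^{\mathcal{N}}$ with $\mathcal{N}=\gal(C^{\alg}/K)=\ker(\pi)$, because $\gal(K/C)\cong \gal(C)/\mathcal{N}$ and $\pi$ is exactly this quotient map. Hence $\gal(K)=\mathcal{N}=\ker(\pi)$. Combining with the isomorphism from part (1), which identifies $\gal(C)\to G$ with $\widetilde{G}\to G$ and therefore identifies the kernels, we get
$$\gal(K)=\ker(\pi)\cong \ker(\widetilde{G}\to G),$$
which is (2).

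I would organize the write-up as: first recall (from Corollary \ref{cor33}, Theorem \ref{equiv}, Corollary \ref{K_is_PAC}) that $C$ is a perfect PAC field and $(C,K)$ is $G$-closed; then note projectivity of $\gal(C)$ via Ax's theorem; then cite Definition \ref{frattini.def} and Theorem \ref{thmuft} to conclude $\gal(C)\to G$ is the universal Frattini cover; finally read off $\gal(K)=\ker(\gal(C)\to G)$ from the Galois correspondence in Subsection \ref{subsec_alg_ext}. I do not anticipate a serious obstacle here — the theorem is genuinely a corollary of the machinery already developed — the only point requiring a little care is making sure the uniqueness clause in Theorem \ref{thmuft} is being applied correctly, namely that a projective Frattini cover of a profinite group is \emph{automatically} the universal one (this is condition (1) of Theorem \ref{thmuft}), so no extra maximality argument is needed. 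The remark \ref{remsjo} presumably flags that Sj\"{o}gren's original proof is different (going through his axioms phrased via absolute Galois groups), and I would point the reader there for that alternative route.
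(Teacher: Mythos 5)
Your proof follows essentially the same route as the paper's: $G$-closedness gives that $\gal(C)\to G$ is a Frattini cover, PAC-ness of $C$ gives projectivity of $\gal(C)$ by Ax's theorem, Theorem \ref{thmuft} then identifies the map as the universal Frattini cover, and part (2) is read off from the Galois correspondence (the paper compresses this to ``it is enough to show (1)''). The one slip is your derivation of ``$C$ is PAC'': you infer it from $K$ being PAC, but PAC does not descend to subfields in general (Ax--Roquette goes in the opposite direction, from $C$ up to $K$); instead cite Theorem \ref{thm_kpac1} or Theorem \ref{equiv} directly, as the paper does.
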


\begin{proof}
Clearly, it is enough to show $(1)$. Since $(K,\bar{\sigma})$ is $G$-closed, by Lemma \ref{alg_ext} the map $\gal(C)\to G$ is a Frattini cover. By Theorem \ref{equiv}, $C$ is PAC, so $\gal(C)$ is a projective profinite group. By Theorem \ref{thmuft}, the map $\gal(C)\to G$ is the universal Frattini cover of $G$.
\end{proof}

\begin{remark}\label{remsjo}
Sj\"{o}gren's results \cite[Theorems 5. and 6.]{sjogren} actually apply to a much more general 
case of existentially closed $G$-transformal fields
for an arbitrary group $G$. We will comment more on this case in Section 5.
\end{remark}

\section{Model-theoretic properties of $G-\tcf$}\label{logic}
In this section, we use the results of Section \ref{sec3} to determine the model-theoretic properties of existentially closed $G$-transformal fields. 
\subsection{Model complete difference fields}
In this part, we prove several general results about the theories $G-\tcf$,
in particular we describe:  the types, the model-theoretic algebraic closure and the completions of $G-\tcf$.
These results and their proofs are basically the same as in \cite{acfa1}. Therefore, instead of copying them, we decided to isolate a general set-up for both $G-\tcf$ and ACFA, in which the arguments remain the same.

Assume that $\mathcal{L}$ is the language of rings extended by the set of unary function symbols $\overline{\sigma}=(\sigma_i)_{i\in I}$. Let $T$ denote an $\mathcal{L}$-theory, which contains axioms asserting that models of $T$ are fields equipped with a set of automorhisms corresponding to $(\sigma_i)_{i\in I}$.

Assume that 
if
\begin{itemize}
\item the $\mathcal{L}$-structures $(K_1,\bar{\sigma}_1)$, $(K_2,\bar{\sigma}_2)$ are models of the theory $T$,
\item the field extensions $E\subseteq K_1$, $E\subseteq K_2$ are regular,
\item we have $\sigma_{1,i}\upharpoonright_E=\sigma_{2,i}\upharpoonright_E$ for each $i\in I$,
\end{itemize}
then the unique difference structure $\bar{\tau}$ on $(K_1\otimes_EK_2)_0$
(by Corollary at \cite[\S17, A.V.140]{baki2} $K_1\otimes_EK_2$ is a domain),
which extends $\overline{\sigma}_1$ and $\overline{\sigma}_2$,
satisfies:
\begin{itemize}
\item $\big((K_1\otimes_EK_2)_0,\bar{\tau}\big)\models T$.
\end{itemize}
Assume moreover that $T$ has a model companion $T'$, whose models are perfect fields.

\begin{remark}\label{tensor1}
We easily see that $G-\tcf$ and ACFA (taken for $T'$) fit to our set-up.
\begin{enumerate}
\item Let $(K_1,\overline{\sigma}_1),(K_2,\overline{\sigma}_2)$ be $G$-transformal fields and suppose that there exists $E\subseteq K_1\cap K_2$ 
such that $K_1$ and $K_2$ are regular over $E$, and $\sigma_{1,i}\upharpoonright_E=\sigma_{2,i}\upharpoonright_E$ for each $i\leqslant e$.
Then the unique difference structure $\bar{\tau}$ on $(K_1\otimes_EK_2)_0$,
which extends $\overline{\sigma}_1$ and $\overline{\sigma}_2$,
is a $G$-transformal field.

\item Similarly for the standard theory of difference fields, i.e. with $G=\mathbb{Z}$.
\end{enumerate}
\end{remark}

\begin{prop}\label{acl}
 Suppose that $(K,\overline{\sigma})\models T'$ and $A\subseteq K$.
 The model-theoretic algebraic closure of $A$, $\acl_{\overline{\sigma}}(A)$,
 coincides with $(\langle A\rangle_{\overline{\sigma}})^{\acl}(K)$, 
 where $\langle A\rangle_{\overline{\sigma}}$ is the smallest difference field containing $A$ 
 and $L^{\acl}(K)$ denotes the relative algebraic closure of $L$ (a subfield of $K$) in $K$.
\end{prop}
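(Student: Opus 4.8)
The plan is to prove both inclusions of the equality $\acl_{\overline{\sigma}}(A)=(\langle A\rangle_{\overline{\sigma}})^{\acl}(K)$, following the classical ACFA argument (as in \cite{acfa1}) but using only the abstract set-up isolated above. First note that $\langle A\rangle_{\overline{\sigma}}$, the smallest difference subfield of $K$ containing $A$, is obtained by closing $A$ under the field operations and under all $\sigma_i$ and, when available, their inverses; this is contained in $\dcl_{\overline{\sigma}}(A)$, hence in $\acl_{\overline{\sigma}}(A)$. Since $\acl_{\overline{\sigma}}(A)$ is relatively algebraically closed in $K$ (model-theoretic algebraic closure always is, as field-algebraicity implies model-theoretic algebraicity in this language), we get $(\langle A\rangle_{\overline{\sigma}})^{\acl}(K)\subseteq\acl_{\overline{\sigma}}(A)$. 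This is the easy inclusion.

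For the reverse inclusion, write $E:=(\langle A\rangle_{\overline{\sigma}})^{\acl}(K)$; note $E$ is a difference subfield of $K$ (it is $\overline{\sigma}$-invariant because $\sigma_i$ permutes the relative algebraic closure of a $\overline{\sigma}$-invariant subfield) and the extension $E\subseteq K$ is regular, since $E$ is relatively algebraically closed in $K$ and $E$ is perfect (being relatively algebraically closed in the perfect field $K$, as $T'$ has perfect models). The strategy is then: given $b\in K\setminus E$, produce an automorphism of (an elementary extension of) $K$ fixing $A$ pointwise and moving $b$ to infinitely many conjugates, witnessing $b\notin\acl_{\overline{\sigma}}(A)$. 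Concretely, take two copies $(K_1,\bar{\sigma}_1),(K_2,\bar{\sigma}_2)$ of $K$ (or of an $|A|^+$-saturated elementary extension) amalgamated over $E$: since $E\subseteq K_1$ and $E\subseteq K_2$ are regular and the difference structures agree on $E$, the hypothesis of the set-up gives that $(K_1\otimes_E K_2)_0$ carries a difference field structure $\bar\tau$ lying in the class axiomatized by $T$; embed it into a model $(M,\bar\tau')$ of $T'$. Inside $M$ we have two copies of $b$, namely $b_1:=b\otimes 1$ and $b_2:=1\otimes b$, and since $b\notin E$ these are distinct (because $E\subseteq K_2$ is regular, $b_1$ and the copy of $K_2$ are ``as independent as possible'' over $E$, so $b_1\neq b_2$; more carefully one checks $b_1$ is not even algebraic over the relevant copy). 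By model completeness of $T'$ the two embeddings $K_i\hookrightarrow M$ are elementary, so $b_1$ and $b_2$ realize the same type over $A$ in $M$. Iterating the amalgamation (amalgamating $n$ copies of $K$ over $E$, which the set-up again permits inductively since each intermediate field stays perfect and the successive extensions stay regular) produces arbitrarily many distinct realizations of $\tp(b/A)$, so $b\notin\acl_{\overline{\sigma}}(A)$.

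The main obstacle I expect is the bookkeeping around regularity and perfectness in the iterated amalgamation: one must verify that $(K_1\otimes_E K_2)_0$ — or rather the image of $K_i$ inside a model $M\models T'$ — is again regular over $E$ (or that $b_1,b_2$ are genuinely independent over $E$ so that they are distinct and conjugate), in order to feed the construction back into the hypotheses of the set-up for the next copy. This is exactly where one uses that $E$ was taken to be the relative algebraic closure of $\langle A\rangle_{\overline{\sigma}}$ rather than $\langle A\rangle_{\overline{\sigma}}$ itself: over the smaller field the tensor product need not be a domain and the extension need not be regular, whereas over $E$ regularity is automatic and is preserved under the relevant compositum operations. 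A secondary point requiring care is confirming that $E$ is genuinely a difference field, i.e. that each $\sigma_i$ maps the relative algebraic closure of $\langle A\rangle_{\overline{\sigma}}$ into itself; this follows since $\sigma_i$ is a field automorphism of $K$ fixing $\langle A\rangle_{\overline{\sigma}}$ setwise and automorphisms preserve relative algebraic closure. Once these regularity bookkeeping points are settled, the conclusion is immediate from model completeness of $T'$.
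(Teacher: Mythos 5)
Your proposal is correct and follows essentially the same route as the paper: set $E=(\langle A\rangle_{\overline{\sigma}})^{\acl}(K)$, observe that $E$ is perfect so $E\subseteq K$ is regular, amalgamate copies of $K$ over $E$ via $(K\otimes_E K)_0$ using the set-up's hypothesis, embed into a model of $T'$, and use model completeness to get the distinct conjugates $a\otimes 1$ and $1\otimes a$ of any $a\notin E$, iterating to get infinitely many. The extra bookkeeping you supply (the easy inclusion, $\overline{\sigma}$-invariance of $E$, and why $a\otimes 1\neq 1\otimes a$) is exactly the detail the paper leaves implicit.
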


\begin{proof}
 Similarly to the proof of \cite[Proposition 1.7]{acfa1},
 there are infinitely many realisations of $\tp(a/A)$ in a sufficiently saturated model for any $a\not\in E:=(\langle A\rangle_{\overline{\sigma}})^{\acl}(K)$.
 To see this, we note that $E$ is perfect, 
 so the extension $E\subseteq K$ is regular. Then in $(K\otimes_E K)_0$ we have two distinct realisations of $p$: $a\otimes 1$, $1\otimes a$, and we can continue this process.
\end{proof}

\begin{prop}\label{equivalent}
 Let $(K_1,\overline{\sigma}_1),(K_2,\overline{\sigma}_2)\models T'$ and suppose that there exists $E\subseteq K_1\cap K_2$ algebraically closed
 (in the sense of model theory) in both $K_1$ and $K_2$, such that $\sigma_{1,k}\upharpoonright_E=\sigma_{2,k}\upharpoonright_E$ for each $k<e$.
 Then
 $$(K_1,\overline{\sigma}_1)\equiv_E(K_2,\overline{\sigma}_2).$$
\end{prop}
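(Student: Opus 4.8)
\textbf{Proof proposal for Proposition \ref{equivalent}.} The plan is to run the standard back-and-forth (Fraïssé-style) argument used for ACFA in \cite{acfa1}, but phrased inside the abstract set-up just introduced, so that it covers $G-\tcf$ simultaneously. We work in a monster model and build an elementary map witnessing $(K_1,\overline{\sigma}_1)\equiv_E(K_2,\overline{\sigma}_2)$ by amalgamating over $E$. The key point that makes this work is the assumption on $E$: it is algebraically closed in the model-theoretic sense inside each $K_i$, and by Proposition \ref{acl} this means $E=(\langle E\rangle_{\overline{\sigma}})^{\acl}(K_i)$, so in particular $E$ is a difference subfield of each $K_i$, $E$ is perfect, and the field extensions $E\subseteq K_1$, $E\subseteq K_2$ are regular. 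This is exactly the input needed to invoke the tensor-product hypothesis of the set-up.

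First I would form the domain $(K_1\otimes_E K_2)_0$; by the cited Corollary in \cite{baki2} it is an integral domain because $E$ is algebraically closed in $K_1$ (regularity of $E\subseteq K_1$ suffices), and by hypothesis it carries a unique difference structure $\overline{\tau}$ extending both $\overline{\sigma}_1$ and $\overline{\sigma}_2$, with $\big((K_1\otimes_E K_2)_0,\overline{\tau}\big)\models T$. Since $T$ has the model companion $T'$ (and every model of $T$ embeds in a model of $T'$), we may embed this difference field into some $(M,\overline{\tau})\models T'$. Now inside $M$ we have two copies of our data: the embedding $a\mapsto a\otimes 1$ of $(K_1,\overline{\sigma}_1)$ and the embedding $b\mapsto 1\otimes b$ of $(K_2,\overline{\sigma}_2)$, and both restrict to the identity on $E$ (via $a\mapsto a\otimes 1$ and $b\mapsto 1\otimes b$ agreeing on $E$). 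Because $T'$ is model complete (it is a model companion), both embeddings $K_1\hookrightarrow M$ and $K_2\hookrightarrow M$ are elementary. Hence for any $\mathcal{L}$-sentence $\varphi$ with parameters from $E$:
\[
(K_1,\overline{\sigma}_1)\models\varphi \iff (M,\overline{\tau})\models\varphi \iff (K_2,\overline{\sigma}_2)\models\varphi,
\]
which is precisely $(K_1,\overline{\sigma}_1)\equiv_E(K_2,\overline{\sigma}_2)$.

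The one place that needs a small amount of care — and what I expect to be the main obstacle — is checking that the two embeddings into $M$ genuinely restrict to the \emph{same} map on $E$, i.e. that $a\otimes 1 = 1\otimes a$ for $a\in E$ inside $(K_1\otimes_E K_2)_0$; this is immediate from the defining relations of the tensor product over $E$, so it is really just bookkeeping. A secondary point is to make sure that $E$ being model-theoretically algebraically closed in each $K_i$ does deliver regularity of $E\subseteq K_i$: this follows because model-theoretic algebraic closure in a perfect difference field contains the relative field-theoretic algebraic closure (Proposition \ref{acl}), so $E$ is relatively algebraically closed in $K_i$ as a field and also perfect, hence $E\subseteq K_i$ is separable and relatively algebraically closed, i.e. regular. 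With these observations the argument is a direct transcription of \cite[proof of the relevant amalgamation/completeness statement]{acfa1} into the present abstract framework, and requires no new ideas beyond the set-up hypotheses.
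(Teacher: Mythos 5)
Your proposal is correct and follows essentially the same route as the paper: embed $\big((K_1\otimes_E K_2)_0,\overline{\tau}\big)$ into a model $M\models T'$ and use model completeness to conclude $K_1, K_2\prec M$, hence $K_1\equiv_E K_2$. The extra checks you supply (that $E$ is perfect and relatively algebraically closed, so the extensions are regular and the tensor product is a domain) are exactly the hypotheses the paper's set-up presupposes, so nothing is missing.
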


\begin{proof}
 We embed $\big((K_1\otimes_EK_2)_0,\bar{\tau}\big)$ into a model of $T'$, say $M$.
 The model completeness of $T'$ implies that $K_1,K_2\prec M$.
\end{proof}

\begin{cor}
 Let $(K_1,\overline{\sigma}_1),(K_2,\overline{\sigma}_2)\models T'$ have a common prime field $P$.
 For $E_i:=\acl_{\overline{\sigma}}(P)$ considered in $K_i$, $i=1,2$, we have
 $$(E_1,\overline{\sigma}_1\upharpoonright_{E_1})\cong(E_2,\overline{\sigma}_2\upharpoonright_{E_2})
 \;\;\iff\;\; (K_1,\overline{\sigma}_1)\equiv(K_2,\overline{\sigma}_2).$$
\end{cor}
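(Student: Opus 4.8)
The plan is to prove the biconditional by treating the two directions separately, using the machinery set up in Proposition \ref{equivalent} and Proposition \ref{acl}. For the implication from left to right, suppose we have an isomorphism $(E_1,\overline{\sigma}_1\upharpoonright_{E_1})\cong(E_2,\overline{\sigma}_2\upharpoonright_{E_2})$. The idea is to use this isomorphism to identify $E_1$ and $E_2$ with a single difference field $E$, sitting inside both $K_1$ and $K_2$ and agreeing with the prime field $P$ on the bottom. By Proposition \ref{acl}, $E_i=\acl_{\overline{\sigma}}(P)$ is the relative algebraic closure (in the field sense) of $\langle P\rangle_{\overline{\sigma}}$ in $K_i$, hence $E_i$ is relatively algebraically closed in $K_i$; moreover, being the $\overline{\sigma}$-algebraic closure, it is model-theoretically algebraically closed in $K_i$. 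Thus $E$ is algebraically closed (in the model-theoretic sense) in both $K_1$ and $K_2$, and the restrictions of the automorphisms agree on $E$. Proposition \ref{equivalent} then yields $(K_1,\overline{\sigma}_1)\equiv_E(K_2,\overline{\sigma}_2)$, and in particular $(K_1,\overline{\sigma}_1)\equiv(K_2,\overline{\sigma}_2)$ (forgetting the parameters from $E$).

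For the converse, suppose $(K_1,\overline{\sigma}_1)\equiv(K_2,\overline{\sigma}_2)$. We want to conclude that $\acl_{\overline{\sigma}}(P)$ computed in $K_1$ is isomorphic, as a difference field, to $\acl_{\overline{\sigma}}(P)$ computed in $K_2$. The point is that both $E_1$ and $E_2$ are built from the prime field $P$ in a purely ``internal'' way that is controlled by the first-order theory: $\langle P\rangle_{\overline{\sigma}}$ is the difference subfield generated by $P$ (which is the same abstract difference ring in any model of $T'$ of the given characteristic, since $P$ and the action of $\overline{\sigma}$ on it are rigid), and $E_i$ is its relative algebraic closure inside $K_i$. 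Since the two models are elementarily equivalent they have the same characteristic, hence the same prime field $P$ with the same (trivial) $\overline{\sigma}$-action, so $\langle P\rangle_{\overline{\sigma}}$ is literally the same difference field $F_0$ in both. Now $E_i$ is the algebraic closure of $F_0$ in $K_i$ together with the induced difference structure; I would argue that the isomorphism type of $(E_i,\overline{\sigma}_i\upharpoonright_{E_i})$ as a difference field is determined by $\tp(\emptyset)=\theo(K_i)$, because the finite difference subextensions of $F_0$ inside $E_i$ of each fixed degree, and their difference structure, are described by first-order sentences (counting roots, specifying which Galois-type configuration occurs, etc.). Since $\theo(K_1)=\theo(K_2)$, these data coincide, and taking the union over all finite subextensions gives a difference-field isomorphism $E_1\cong E_2$.

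The main obstacle I expect is making the converse direction fully rigorous: namely, verifying that the isomorphism type of the difference field $\acl_{\overline{\sigma}}(P)$ really is encoded in the first-order theory $\theo(K_i)$. One clean way to handle this is to avoid an explicit coding argument and instead argue by elimination: embed $F_0=\langle P\rangle_{\overline{\sigma}}$ into a large saturated model $K^*$ of $T'$; then $\acl_{\overline{\sigma}}(P)$ inside $K^*$ is a canonical object, and using model completeness of $T'$ one shows that $K_i$ embeds elementarily into $K^*$ (or into a common elementary extension), so that $E_i$ is identified with $\acl_{\overline{\sigma}}(P)^{K^*}$ up to difference-field isomorphism. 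Concretely: by $\omega$-saturation and the fact that $K_1\equiv K_2$, one can realize $E_2$ inside an elementary extension of $K_1$ and check it is algebraic over $F_0$, hence lands inside $E_1$; symmetry then gives the isomorphism. This is essentially the same style of argument as in \cite[Proposition 1.7]{acfa1} and uses Proposition \ref{equivalent} as the key transfer step, so once the left-to-right direction is in place the converse follows by a symmetric saturation argument. I would write the converse carefully since the naive ``it's first-order'' claim, while true, deserves the explicit reduction through saturation and model completeness.
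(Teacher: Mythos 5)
Your proposal is correct and follows essentially the same route as the paper: the left-to-right direction identifies $E_1$ with $E_2$ and applies Proposition \ref{equivalent}, and the converse is handled exactly as in the paper by embedding both models into a common monster model of $T'$ and observing that $\acl_{\overline{\sigma}}(P)$ computed there coincides with each $E_i$. The intermediate ``first-order coding'' discussion is unnecessary, but you correctly discard it in favour of the saturation argument, which is the paper's proof.
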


\begin{proof}
 To prove the implication from left to right we use Proposition \ref{equivalent} after identifying $E_1$ with $E_2$.
 To prove the opposite implication, we embed $K_1$ and $K_2$ in some monster model of $T'$, say $M$, and observe that $\acl_{\overline{\sigma}}(P)$
 considered in $M$ is equal to both $E_1$ and $E_2$.
\end{proof}

\begin{cor}\label{typ_iso}
 Let $(K,\overline{\sigma})\models T'$, $E$ be a substructure of $K$ and let $a$, $b$ be finite tuples from $K$.
 Then
 $$\tp(a/E)=\tp(b/E)$$
 if and only if there exists an $E$-isomorphism betwen
 $\acl_{\overline{\sigma}}(Ea)$ and $\acl_{\overline{\sigma}}(Eb)$, sending $a$ to $b$.
\end{cor}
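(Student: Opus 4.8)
The plan is to prove both implications directly, mirroring the standard back-and-forth arguments available in the $T'$ set-up, and relying heavily on Proposition \ref{equivalent} together with the description of $\acl_{\overline{\sigma}}$ from Proposition \ref{acl}.

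For the implication from right to left, suppose $\Phi: \acl_{\overline{\sigma}}(Ea) \to \acl_{\overline{\sigma}}(Eb)$ is an $E$-isomorphism of $\mathcal{L}$-structures sending $a$ to $b$. Write $A := \acl_{\overline{\sigma}}(Ea)$ and $B := \acl_{\overline{\sigma}}(Eb)$; both are substructures of $K$, and by Proposition \ref{acl} both are relatively algebraically closed (as fields) in $K$, hence perfect, hence the extensions $A \subseteq K$ and $B \subseteq K$ are regular. The isomorphism $\Phi$ lets us identify $A$ with $B$ over $E$, so we may form the difference structure $\bar{\tau}$ on $(K \otimes_A K)_0$ as in the general set-up, which is again a model of $T'$ after embedding it into $M \models T'$; by model completeness of $T'$ both copies of $K$ are elementary substructures of $M$. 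Under this identification, $a$ in the first copy and $b$ in the second copy have the same type over the identified field $A = B$ inside $M$, because an $\mathcal{L}$-automorphism of $M$ extending $\Phi$-via-conjugation swaps them — more precisely, $a \otimes 1$ and $1 \otimes b$ are interchanged by the canonical flip on $(K\otimes_A K)_0$ composed with $\Phi$, and this flip is an $E$-automorphism fixing $A$ pointwise after the identification. Hence $\tp(a/E) = \tp(b/E)$ computed in $M$, and since $K \prec M$ this equals the type computed in $K$.

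For the implication from left to right, assume $\tp(a/E) = \tp(b/E)$. The key point is that an equality of types determines an $E$-isomorphism on the model-theoretic algebraic closures: by Proposition \ref{acl}, $\acl_{\overline{\sigma}}(Ea) = (\langle Ea \rangle_{\overline{\sigma}})^{\acl}(K)$, and every element of this set is a solution of a polynomial over $\langle Ea\rangle_{\overline{\sigma}}$ isolated among its conjugates (up to finite multiplicity) by a formula over $Ea$; since $\tp(a/E) = \tp(b/E)$, these same formulas with $b$ in place of $a$ are consistent and realized in $K$, and picking the realizations coherently — using that the formulas pin down each algebraic element up to a finite Galois-type choice — produces a well-defined bijection $\acl_{\overline{\sigma}}(Ea) \to \acl_{\overline{\sigma}}(Eb)$. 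One checks it respects $+$, $\cdot$, and each $\sigma_i$ because all of these relations are first-order over $Ea$ and hence transported by the type equality. This is exactly the argument of the corresponding statement in \cite{acfa1}.

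The main obstacle I expect is the left-to-right direction: one must argue carefully that equality of types yields not merely a partial elementary map on the tuples but an honest $\mathcal{L}$-structure isomorphism of the full algebraic closures, which requires organizing the choices of images of algebraic elements compatibly (so that field operations and the $\sigma_i$'s are preserved simultaneously). The cleanest route is probably to pass to a monster model $M \models T'$, realize $\tp(a/E) = \tp(b/E)$ via an automorphism $\theta$ of $M$ fixing $E$ with $\theta(a) = b$, and then observe that $\theta$ restricts to an isomorphism $\acl_{\overline{\sigma}}(Ea) \to \acl_{\overline{\sigma}}(Eb)$ because $\acl_{\overline{\sigma}}$ (being model-theoretic algebraic closure, which is invariant under automorphisms) commutes with $\theta$; this sidesteps the coherence bookkeeping entirely. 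With that observation the whole corollary follows quickly from Proposition \ref{equivalent} and Proposition \ref{acl}.
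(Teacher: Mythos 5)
Your proposal is correct and follows essentially the same route as the paper: the left-to-right direction via an automorphism of a monster model over $E$ (which automatically restricts to an isomorphism of the $\acl_{\overline{\sigma}}$'s), and the right-to-left direction by forming $(K\otimes_{\acl_{\overline{\sigma}}(Ea)}K)_0$ with the right-hand algebra structure twisted by the given isomorphism, embedding it into a model of $T'$, and using model completeness to see both copies of $K$ as elementary substructures. The only cosmetic difference is that the identity $a\otimes 1=1\otimes b$ already holds in the tensor product, so the ``flip'' bookkeeping you describe can be dispensed with.
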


\begin{proof}
 If $\tp(a/E)=\tp(b/E)$, then we embed $K$ in a monster model $M\models T'$, so there is $f\in\aut(M/E)$ sending $a$ to $b$.
 Now, suppose that $f:\acl_{\overline{\sigma}}(Ea)\to\acl_{\overline{\sigma}}(Eb)$ is an isomorphism over $E$ such that $f(a)=b$.
 We consider $K\otimes_{\acl_{\overline{\sigma}}(Ea)} K$, where the $\acl_{\overline{\sigma}}(Ea)$-algebra structure 
 on the right-hand side of the tensor product
 is given by $f$. 
 To finish the proof, observe that $x\otimes 1\mapsto 1\otimes x$ is an elementary isomorphism (over $E$)
 between two copies of $K$ sending $b$ to $a$, hence it extends to an automorphism of a monster model.
\end{proof}

\subsection{Supersimplicity of $G-\tcf$}
In this part, we show some finer results about model theory of existentially closed $G$-transformal fields and the corresponding $K$-strongly PAC fields of constants. The results about $G-\tcf$ correspond to the known results about ACFA and the results about $K$-strongly PAC fields correspond to the known results about PAC fields with one important difference: we obtain model completeness and elimination of imagineries for $K$-strongly PAC fields only after adding \emph{finitely} many constants.

We recall a well-known theorem about the model-theoretic properties of PAC fields.
 
\begin{theorem}\label{pac_simple}
 \cite[Fact 2.6.7]{kim1} Let $F$ be a PAC field. We have the following:
 \begin{itemize}
  \item[i)] The theory $\theo(F)$ is simple if and only if the field $F$ is bounded;
  \item[ii)] The theory $\theo(F)$ is supersimple if and only if the field $F$ is bounded and perfect;
  \item[iii)] The theory $\theo(F)$ is stable if and only if the field $F$ is separably closed.
 \end{itemize}
\end{theorem}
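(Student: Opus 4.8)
This is a classical fact, quoted in the paper from \cite{kim1}; we sketch the argument behind it. The plan is to reduce all three items to the structure theory of PAC fields (see \cite[Chapters 11, 20]{FrJa} and \cite{ChaPil}), according to which the complete theory of a PAC field $F$ is governed by the elementary theory of $F$ as a field together with the isomorphism type of its algebraic part and, crucially, by the absolute Galois group $\gal(F)$ viewed as a profinite group. Concretely, over an algebraically closed (in the model-theoretic sense) base $E$, two tuples realise the same type iff there is an $E$-isomorphism of their model-theoretic algebraic closures respecting the field structure and the induced data on Galois groups, and forking independence of $A$ and $B$ over $E$ amounts to algebraic independence of $\acl(EA)$ and $\acl(EB)$ over $E$ together with a freeness condition on the corresponding absolute Galois groups. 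All three equivalences are then read off this picture.

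For item (i): when $\gal(F)$ is small --- equivalently, for a field, when $F$ is bounded --- the forking description above satisfies the independence theorem over algebraically closed sets (Chatzidakis--Pillay, \cite{ChaPil}), so by the Kim--Pillay theorem $\theo(F)$ is simple. Conversely, if $F$ is not bounded there is some $n$ with infinitely many subextensions of degree $n$, and using the PAC property to realise all the relevant Galois patterns one produces an inconsistent tree of formulas, so $\theo(F)$ is not simple.

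For item (ii), assume $F$ is bounded PAC, so $\theo(F)$ is simple, and compute $\su$-ranks from the forking description: for a finite tuple $a$ over an algebraically closed base $E$ the transcendental part contributes $\td(\langle E,a\rangle/E)$ and the now bounded Galois part contributes only a finite amount, so if $F$ is moreover perfect every finite tuple has finite $\su$-rank and $\theo(F)$ is supersimple; if $F$ is imperfect, the Frobenius produces a chain $a, a^{1/p}, a^{1/p^2},\dots$ whose types form an infinite sequence of proper forking extensions (exactly as for separably closed fields of positive imperfection degree), so the $\su$-rank is infinite and $\theo(F)$ is not supersimple. For item (iii): if $F$ is separably closed then $\gal(F)$ is trivial and $\theo(F)$ is the theory of separably closed fields of the given characteristic and imperfection degree, which is stable; conversely, if $F$ is PAC and not separably closed then $\gal(F)\neq 1$, and fixing a non-trivial finite continuous quotient and again using PAC-ness to realise all extension patterns independently one interprets a structure with the independence property --- the mechanism that already makes pseudo-finite fields unstable --- so $\theo(F)$ is unstable.

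The main obstacle, and the one step I would quote rather than reprove, is the forward direction of item (i): the independence theorem for bounded PAC fields together with the Galois-theoretic description of forking. This is the technical core of \cite{ChaPil} and \cite{kim1}; once it is available, the $\su$-rank computation in item (ii) and the instability argument in item (iii) are comparatively soft, reducing to exhibiting explicit bad configurations built respectively from infinitely many degree-$n$ subextensions, from the Frobenius, and from a single non-trivial finite Galois quotient.
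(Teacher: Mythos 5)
The paper does not prove this statement at all: it is imported verbatim as \cite[Fact 2.6.7]{kim1}, so there is no in-paper argument to compare yours against, and quoting the technical core (the Galois-theoretic description of forking and the independence theorem for bounded PAC fields from \cite{ChaPil}) is exactly what the authors themselves do. Your sketch of what lies behind the citation is a faithful outline of the standard proofs: Kim--Pillay plus the Chatzidakis--Pillay independence theorem for the forward direction of (i), the tree property from infinitely many degree-$n$ extensions for the converse, Wood's theorem and Duret's theorem for (iii). One step is stated too loosely: in (ii), for an imperfect PAC field $F$ the elements $a^{1/p}, a^{1/p^2},\ldots$ need not lie in $F$, so you cannot literally run the separably-closed-field forking chain inside $F$; the correct mechanism is the general theorem that supersimple fields are perfect, proved via the definable subfields $F^{p^n}$ and the Lascar inequalities (a proper definable subfield isomorphic to $F$ forces $\su(F)\geqslant \su(F^{p})+\su(F^{p})>\su(F^{p})=\su(F)$ at the level of ordinal arithmetic, a contradiction). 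With that substitution the outline is correct, and in any case the statement is being cited rather than reproved in the paper.
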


Let us fix a sufficiently saturated model $(K,\overline{\sigma})$ of $G-\tcf$ and let $C=K^G$. 
In this subsection, $\bar{c}$ denotes both the tuples of elements $\bar{c}$ and $\bar{d}$ discussed in Remark \ref{rem_Cspace}.
We directly obtain the following.

\begin{cor}\label{unstable}
 Any completion of $G-\tcf$ is unstable. For any $(K,\overline{\sigma})\models G-\tcf$, the theory $\theo(K^G)$ (in the language of rings) is supersimple.
\end{cor}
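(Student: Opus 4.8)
The plan is to deduce both assertions directly from the algebraic description of models of $G-\tcf$ established in Section \ref{sec3} together with the classical dichotomy for PAC fields recalled in Theorem \ref{pac_simple}; no new ideas are needed beyond assembling these facts, which is presumably why the statement is phrased as ``we directly obtain the following''.

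For the unstability, I would proceed as follows. Fix $(K,\overline{\sigma})\models G-\tcf$. By Corollary \ref{K_is_PAC} the underlying field $K$ is PAC, and since $G$ is non-trivial, Theorem \ref{non-sep} tells us that $K$ is \emph{not} separably closed. Hence Theorem \ref{pac_simple}(iii) applies and shows that $\theo(K)$, in the pure language of rings, is unstable. Since the pure field $K$ is a reduct of the $\mathcal{L}_G$-structure $(K,\overline{\sigma})$, and a reduct of a stable theory is stable, the theory $\theo(K,\overline{\sigma})$ is unstable as well. As this holds for every model of $G-\tcf$, every completion of $G-\tcf$ is unstable.

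For the supersimplicity, set $C:=K^G$. By Corollary \ref{4_to_2} the field $C$ is a bounded $K$-strongly PAC field; in particular it is a bounded PAC field, and by Corollary \ref{cor33} it is also perfect. Feeding ``bounded and perfect'' into Theorem \ref{pac_simple}(ii) yields that $\theo(C)$, in the language of rings, is supersimple. The only point deserving a word of care is the transition between a pure field and the full $G$-transformal structure: for the first assertion this is the harmless reduct observation above, and for the second assertion it does not even arise, since there one only discusses $\theo(K^G)$ in the ring language. Thus there is no genuine obstacle here; the corollary is a bookkeeping consequence of the results already proved.
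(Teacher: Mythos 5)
Your proposal is correct and follows essentially the same route as the paper: the paper's (very terse) proof cites Corollary \ref{K_is_PAC} for instability and Theorems \ref{thm_kpac1} and \ref{C_is_bounded} for supersimplicity, leaving the reader to supply exactly the ingredients you make explicit, namely Theorem \ref{non-sep}, perfectness from Corollary \ref{cor33}, the PAC dichotomy of Theorem \ref{pac_simple}, and the reduct observation. Nothing to add.
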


\begin{proof}
The first part is a consequence of Corollary \ref{K_is_PAC}. 
The second part is a consequence of Theorem \ref{thm_kpac1} and Theorem \ref{C_is_bounded}.
\end{proof}

Similarly as in the case of ACFA, we can compute the SU-rank of the field of constants.

\begin{prop}\label{su_1}
The theory $\theo(C,\bar{c})$ is supersimple of SU-rank $1$.
\end{prop}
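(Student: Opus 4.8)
The plan is to reduce the statement about $\theo(C,\bar{c})$ to the known structure theory of bounded PAC fields, specifically to a formula relating SU-rank to transcendence degree. First I would recall that by Corollary \ref{4_to_2}, $C=K^G$ is a bounded, perfect, $K$-strongly PAC field, so by Theorem \ref{pac_simple}(ii) the theory $\theo(C)$ (in the pure ring language) is supersimple; adding the finitely many constants $\bar{c}$ does not disturb supersimplicity, so $\theo(C,\bar{c})$ is supersimple and it remains only to pin down the rank. The reason for adding $\bar{c}$ is precisely the one flagged in the introduction and at the start of this subsection: over the bare ring language a PAC field is model-complete only after infinitely many constants, but the $G$-closedness condition (equivalently, boundedness with $\rk(\gal(C))=\rk(G)<\infty$) lets us get away with finitely many, and the tuple $\bar{c}$ encodes the multiplication and $G$-action making $K\cong C^e$, hence encodes the finite Galois data of $C$.

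Next I would invoke the standard computation of SU-rank in supersimple PAC fields: for a bounded perfect PAC field $F$ and a tuple $a$, one has $\SU(a/F)=\trd(a/F)$ (this is the analogue, in the PAC setting, of the fact that in ACFA the SU-rank is controlled by transcendence degree; it goes back to the work on the model theory of PAC fields, e.g.\ Chatzidakis--Pillay and Hrushovski, cited in the excerpt as \cite{ChaPil}). In particular $\SU(F)=1$: any single element is either algebraic over the prime field part, contributing rank $0$, or transcendental, contributing rank $1$, and a generic element of $F$ over $\emptyset$ has transcendence degree $1$. Since $C$ is infinite (it contains a copy of its prime field and, being PAC, is not algebraically closed but is certainly infinite) and is perfect PAC bounded, $\SU(C)=1$ in the pure ring language; adjoining the finite parameter tuple $\bar{c}$ changes neither the underlying definable sets nor the forking calculus in a way that affects the rank of the home sort, so $\SU(\theo(C,\bar{c}))=1$ as well. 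One should note here that $\bar{c}$ is algebraic over $C$ — indeed $\bar{c}\subseteq C$ by Remark \ref{rem_Cspace}(2) — so naming it is a genuinely harmless expansion from the point of view of rank.

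The main obstacle I anticipate is not conceptual but bookkeeping: one must make sure that the passage from $\theo(C)$ to $\theo(C,\bar{c})$ is legitimate, i.e.\ that the extra constants do not secretly raise the rank (they cannot, since they lie in $\dcl(\emptyset)$ after naming, and naming elements of the model never increases $\SU$ of the structure) and, more subtly, that the rank-$1$ computation in the PAC literature applies verbatim to $K$-strongly PAC fields (it does, since $K$-strongly PAC implies PAC and we already know $C$ is bounded and perfect, which are the only hypotheses that computation uses). I would therefore structure the proof as: (1) cite Corollary \ref{4_to_2} and Theorem \ref{pac_simple}(ii) for supersimplicity of $\theo(C)$; (2) cite the PAC rank computation for $\SU(C)=1$; (3) observe $\bar{c}\subseteq C$ and conclude that the expansion by these constants preserves both supersimplicity and $\SU$-rank $1$. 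The only real content is step (2), and it is quoted from the literature rather than reproved.
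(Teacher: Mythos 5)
Your proposal is correct and follows essentially the same route as the paper: both arguments reduce to the fact that forking in a bounded perfect PAC field is field-theoretic algebraic dependence (Chatzidakis--Pillay, 4.7--4.8), so every forking extension of a $1$-type is algebraic and the SU-rank of $C$ is $1$, with the expansion by the finite tuple $\bar{c}\subseteq C$ noted to be harmless. The paper is merely terser, compressing your steps (1) and (3) into the remark that rank $1$ for $\theo(C)$ ``implies the thesis.''
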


\begin{proof}
We prove that SU-rank of $\theo(C)$ is equal to $1$, which implies the thesis.
It is enough to show that for a type of an element of $C$, any of its forking extension is algebraic. It is clear from the description of forking in PAC fields (as the field-theoretic algebraic closure in the case of perfect PAC fields) given in 4.7 and 4.8 of \cite{ChaPil}.
\end{proof}

\begin{prop}\label{su_su}
The theory $\theo(K,\bar{\sigma})$ is supersimple of SU-rank e.
\end{prop}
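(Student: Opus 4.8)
The plan is to deduce this from Proposition~\ref{su_1} by making precise the idea, already visible in Remark~\ref{rem_Cspace}, that $(K,\overline{\sigma})$ and its field of constants $C:=K^G$ carry essentially the same model theory, only ``spread out'' by a factor of $e=[K:C]$.

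First I would record that, after naming the finite tuple $\bar c$ of structure constants (the $c_{i,j,l}$ and $d_{k,j,l}$ of Remark~\ref{rem_Cspace}, all lying in $C$), the structure $(K,\overline{\sigma},\bar c)$ is bi-interpretable with $(C,\bar c)$. In one direction this is exactly Remark~\ref{rem_Cspace}(2)--(4): by Lemma~\ref{strict} the model is strict, so $[K:C]=e$, and $f$ identifies $(K,\overline{\sigma})$ with $C^e$ equipped with the $G$-transformal field structure defined there by ring-language formulas with parameters $\bar c$; thus $(K,\overline{\sigma},\bar c)$ is interpreted in $(C,\bar c)$ on the definable set $C^e$. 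In the other direction $C=K^G=\{x: \sigma_1(x)=x,\dots,\sigma_e(x)=x\}$ is $\emptyset$-definable in $(K,\overline{\sigma})$ and $\bar c\subseteq C$, so $(C,\bar c)$ is interpreted in $(K,\overline{\sigma},\bar c)$.

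Next, since $\theo(C,\bar c)$ is supersimple by Proposition~\ref{su_1}, and supersimplicity is inherited by structures interpretable in a supersimple structure, $\theo(K,\overline{\sigma},\bar c)$ is supersimple; naming finitely many constants does not affect this, so $\theo(K,\overline{\sigma})$ is supersimple as well. For the value of the rank I would use that a bi-interpretation transports forking independence, hence $\su$-rank of definable sets. Under it the home sort of $K$ corresponds to $C^e$, and as $\su$-rank of $\theo(C,\bar c)$ is $1$, Lascar's inequalities give $\su(b)\leqslant \su(b_1)\oplus\dots\oplus\su(b_e)\leqslant e$ for every $b=(b_1,\dots,b_e)\in C^e$, with equality $\su(b)=e$ when $b_1,\dots,b_e$ are mutually independent generics of $C$; hence $\su(C^e)=e$ and so $\su(\theo(K,\overline{\sigma},\bar c))=e$. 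This already yields $\su(\theo(K,\overline{\sigma}))\geqslant e$. For the reverse inequality over $\emptyset$ it suffices to bound $\su(a/\emptyset)$ for a single $a\in K$: picking a small submodel $M_0\preceq(K,\overline{\sigma})$ with $a\ind_\emptyset M_0$ we get $\su(a/\emptyset)=\su(a/M_0)$, and $M_0$ is itself strict, so it supplies its own structure constants and the same bi-interpretation is available over $M_0$ with the constant field $M_0^G$ (of $\su$-rank $1$), bounding $\su(a/M_0)$ by $e$. Equivalently, and more concretely, $\langle a\rangle_{\overline{\sigma}}$ is generated over the prime field by the $e$ elements $\sigma_{g_1}(a),\dots,\sigma_{g_e}(a)$, so by Proposition~\ref{acl} one has $\td(\acl_{\overline{\sigma}}(a)/\text{prime field})\leqslant e$, and --- exactly as for ACFA in this section --- the description of forking in $G-\tcf$ makes this transcendence degree an upper bound for $\su(a/\emptyset)$.

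The main point requiring care is the interface between the interpretation and simplicity theory: that $\su$-rank genuinely transports across the bi-interpretation, and that adjoining the parameters $\bar c$ is harmless both for supersimplicity and, through the transcendence-degree bound (or the submodel argument), for the exact value of the rank. Everything else is the routine Lascar-inequality bookkeeping together with the fact, already used for ACFA, that $\su$ of a type in these theories is governed by transcendence degree.
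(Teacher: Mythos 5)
Your argument is correct and is essentially the paper's own proof: Proposition~\ref{su_1} gives $\su$-rank $1$ for the constants, Lascar's inequality gives $\su(C^e)=e$, and the bi-interpretation of Remark~\ref{rem_Cspace} (after naming $\bar c$) transports this to $\theo(K,\overline{\sigma})$. Your additional care about removing the named constants $\bar c$ is a harmless refinement (and in fact unnecessary, since the $\su$-rank of the home sort is computed as a supremum over types over arbitrary parameter sets, so it is insensitive to adding finitely many constants), not a different route.
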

\begin{proof}
It follows from Proposition \ref{su_1}, Lascar inequality and the bi-interpretability of the theories $\theo(K,\bar{\sigma})$ and $\theo(C)$ (after adding appropriate constants) given by Remark \ref{rem_Cspace}. We give a more detailed argument below.

By Proposition \ref{su_1} and Lascar's inequality we have
$$\su_{\theo(C,\bar{c})}(C^e)=e.$$
Let $\mathcal{C}$ 
be the structure with the universe $C^e$ and the full induced first-order structure coming from the language of rings extended by the constants $\bar{c}$. Then we still have
$$\su_{\theo(\mathcal{C})}(C^e)=e.$$
By Remark \ref{rem_Cspace}, after adding finitely many extra constants, 
the structure $\mathcal{C}$ is inter-definable with a $G$-transformal field which is isomorphic to $(K,\bar{\sigma})$. Therefore the theory of $(K,\bar{\sigma})$ is supersimple and we have
\begin{equation*}
\su(\theo(K,\bar{\sigma}))=e.\qedhere
\end{equation*}
\end{proof}

Let $F$ be a bounded PAC field. It is proved in Section 4.6 of \cite{ChaPil} that the theory of $F$, in the language of fields with countably many extra constants coding all the finite extensions of $F$, is model complete and eliminates imaginaries.

\begin{theorem}\label{complete411}
Suppose that $(C,K)$ is a $K$-strongly PAC field and let $\bar{c}$ be the finite tuple of constants from Remark \ref{rem_Cspace}. Then the theory $\theo(C,+,\cdot,\bar{c})$ is model complete and eliminates imaginaries.
\end{theorem}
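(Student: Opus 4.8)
The plan is to reduce the statement about $\theo(C,+,\cdot,\bar c)$ to the known result of Chatzidakis--Pillay from Section 4.6 of \cite{ChaPil}, which says that the theory of a bounded PAC field becomes model complete and eliminates imaginaries once one adds \emph{countably many} constants coding all finite extensions. The key point is that for a $K$-strongly PAC field, the field $K$ is \emph{one} finite Galois extension of $C$ through which, thanks to the $G$-closedness supplied by Theorem \ref{kpacimpgcl}, all finite extensions of $C$ are controlled: by Lemma \ref{nowy1} $C$ has no finite extensions of degree prime to $e=[K:C]$, and more importantly the restriction map $\gal(C^{\alg}/C)\to\gal(K/C)=G$ is a Frattini cover, so $\gal(C^{\alg}/C)$ is topologically generated by any lift of generators of $G$ — hence $\rk(\gal(C^{\alg}/C))=\rk(G)<\infty$ (this is exactly the content of the proof of Theorem \ref{C_is_bounded}). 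So only \emph{finitely many} constants should be needed, and the constants $\bar c$ (encoding the multiplication and $G$-action on $K$ as a $C$-algebra, as in Remark \ref{rem_Cspace}) are a natural candidate.

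Concretely, first I would record that $(C,K)$ is a $G$-closed field with $C$ PAC (Theorem \ref{kpacimpgcl} together with the definition of $K$-strongly PAC), so by Theorem \ref{equiv} the $G$-transformal field $(K,\gal(K/C))$ is a model of $G-\tcf$. By Remark \ref{rem_Cspace}(2) and (4), the structure $(C,+,\cdot,\bar c)$ is interdefinable with a structure having universe $C^e$ which is the $G$-transformal field $(K,\bar\sigma)$; in particular $(C,+,\cdot,\bar c)$ and $(K,\bar\sigma)$ are bi-interpretable (after naming $\bar c$). So it suffices to show $\theo(K,\bar\sigma)$, equivalently $\theo(C,+,\cdot,\bar c)$, is model complete and eliminates imaginaries — model completeness of $G-\tcf$ is Lemma \ref{mod_complete}, which already hands us model completeness on the $(K,\bar\sigma)$ side. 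The remaining and main work is elimination of imaginaries.

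For elimination of imaginaries I would argue as follows. The Chatzidakis--Pillay result gives EI for $\theo(C)$ in the language $\mathcal{L}_C$ of rings augmented by a countable set of constants $\{d_n\}$ coding all finite extensions of $C$. Now use boundedness in the sharp form above: since $\gal(C^{\alg}/C)$ has finite rank and, by Lemma \ref{nowy1}, $C$ has finite extensions only of degrees built from the primes dividing $e$, the ``tower'' of constants $\{d_n\}$ is in fact generated over $C$ by the single Galois extension $K$, i.e. every finite extension of $C$ embeds into a finite iterated extension all of whose layers are visible inside the $G$-transformal structure coded by $\bar c$; hence each constant $d_n$ is interdefinable over $\emptyset$ with (a tuple of) elements definable from $\bar c$ in $C^e$. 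Therefore $\mathcal{L}_C$-definable sets and $\mathcal{L}_C$-imaginary sorts coincide, up to $\emptyset$-definable bijection, with those of $(C,+,\cdot,\bar c)$, and EI transfers. Equivalently, one can phrase this on the $(K,\bar\sigma)$ side: show $\theo(K,\bar\sigma)$ eliminates imaginaries by reducing an imaginary to one in $C$ (using that $C=K^G$ is $\emptyset$-definable and $K$ is finite-dimensional over $C$), then quote EI for bounded PAC $C$ with the constants $\bar c$ already sufficing by the finite-rank argument.

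The main obstacle I anticipate is precisely the bookkeeping that ``finitely many constants suffice'': one must check that the countably many constants of \cite{ChaPil} are all definable from $\bar c$, which amounts to showing that every finite separable extension of $C$ is generated by the $G$-action data on $K$ — this uses that $\gal(C^{\alg}/C)$ is the universal Frattini cover of $G$ (Theorem \ref{sjor}(1)), so its finite quotients, hence the finite extensions of $C$, are all determined by $G$ and thus by $\bar c$. Care is needed because EI is not automatically preserved under adding or removing constants in general; here it works because the constants being removed are themselves interdefinable with the retained ones. Once that definability bookkeeping is in place, model completeness is immediate from Lemma \ref{mod_complete} via the bi-interpretation, and EI follows from \cite[\S4.6]{ChaPil}.
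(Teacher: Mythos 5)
Your argument for model completeness is exactly the paper's: bi-interpret $(C,+,\cdot,\bar c)$ with the $G$-transformal field $(K,\bar\sigma)$ via Remark \ref{rem_Cspace} and quote Lemma \ref{mod_complete}. That part is fine.

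For elimination of imaginaries you take a genuinely different route from the paper, and it has a gap. The paper does \emph{not} transfer EI from the Chatzidakis--Pillay result for bounded PAC fields with countably many constants; it reruns the Hrushovski-style EI argument (Proposition 3.2 of \cite{hru-manu}, as in \cite[1.10]{acfa1} and \cite[2.9]{ChaPil}) directly in the theory $\theo(C,+,\cdot,\bar c)$, using transcendence degree as the rank. Your transfer hinges entirely on the claim that each of the countably many constants $d_n$ coding the finite extensions of $C$ is $\emptyset$-definable from $\bar c$, and this is asserted rather than proved. The Frattini-cover property tells you that $\gal(C^{\alg}/C)\cong\widetilde{G}$ as an abstract profinite group, hence that $C$ is bounded and that the \emph{isomorphism types} of its finite extensions are determined by $G$; it does not tell you that the specific elements of $C$ which \cite{ChaPil} use to code those extensions (e.g.\ coefficients of irreducible polynomials of degree $>e$, whose roots lie in $C^{\alg}\setminus K$) are definable from the finitely many constants $\bar c$, which only describe the degree-$e$ extension $K$. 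When $\widetilde{G}$ has several open subgroups of a given index, one must definably separate the irreducible polynomials of that degree according to which extension their roots generate, and nothing in your argument produces such a definition over $\bar c$. Since EI is not preserved under passing to a reduct (a code computed in the expanded language need not be fixed by the larger automorphism group of the reduct), the conclusion does not follow without this definability step. Either supply that step as an actual argument, or do what the paper does and prove EI directly via the standard independence/amalgamation argument with transcendence degree.
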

\begin{proof} By Remark \ref{rem_Cspace}, the theory $\theo(C,+,\cdot,\bar{c})$ is bi-interpretable with the theory $\theo(K,+,\cdot,\bar{\sigma},\bar{c})$. By Lemma \ref{mod_complete}, the theory $\theo(K,+,\cdot,\bar{\sigma},\bar{c})$ is model complete. Hence the theory $\theo(C,+,\cdot,\bar{c})$ is model complete as well.

To show the elimination of  imaginaries, we repeat a standard reasoning.
The proof originating from Proposition 3.2 in \cite{hru-manu} (which was also used in \cite[1.10]{acfa1} and \cite[2.9]{ChaPil}) goes through in our context. 
We just need to use the transcendence dimension instead of the pair (transformal dimension,transformal degree) used in \cite{acfa1} or the fundamental order used in \cite{ChaPil}.
\end{proof}

\begin{remark}
As in Section 1.13 in \cite{acfa1}, the theory $\theo(C)$ does \emph{not} eliminate imagineries.
\end{remark}

\section{Fields with operators}\label{last}
In this section, we discuss several possible generalizations of the transformal fields which have been considered in this paper.
Let us see first that 
the notion of a $G$-transformal field fits into the formalism of iterative $\mathcal{D}$-operators from \cite{moosca1}. For each $n\in \Nn_{>0}$ and a $k$-algebra $R$, we define $\mathcal{D}_n(R)=R^e$, and for each $m\geqslant n$, we define $\pi_{m,n}=\id$. Then we obtain a \emph{generalized Hasse-Schmidt system} which is in a way constant. We put an \emph{iterative} structure on our system using the comultiplication map coming from the the Hopf algebra of functions
$$\Func(G,R)\cong_R \Func(G,k)\otimes_kR.$$
For such a choice of an iterative generalized Hasse-Schmidt system $\mathcal{D}$, it is easy to check that a $\mathcal{D}$-ring structure on a $k$-algebra $R$ corresponds exactly to an action of $G$ on $R$ by $k$-algebra automorphisms.

The above observation can be generalized by replacing the finite dimensional Hopf algebra $\Func(G,k)$ with an arbitrary finite dimensional Hopf algebra $\mathcal{H}$. In such a case, the iterative $\mathcal{D}$-operators on $R$ correspond exactly to the group scheme actions of $\spec(\mathcal{H})$ on $\spec(R)$. In \cite{HK}, the model theory of such group scheme actions was analyzed for a \emph{local} $\mathcal{H}$ (corresponding to an \emph{infinitesimal} group scheme). These actions correspond to certain \emph{iterative truncated Hasse-Schmidt derivations}. It is worth mentioning that some other types of iterative generalized Hasse-Schmidt systems (e.g. the one corresponding to standard iterative Hasse-Schmidt derivations) are not of this form, i.e. they are not ``constant''. But they can be approximated by limits of constant systems coming from finite group schemes. In the case of the standard iterativity, it corresponds exactly to the fact that the formalization of the additive group is the direct limit of its Frobenius kernels. We expect that this approximation phenomena is closely related to the companionability of the theories of fields with these types of iterative operators (see Section 6 in \cite{HK}).

As it was mentioned in the introduction, any finite group scheme $\mathfrak{g}$ fits into an exact sequence
$$1\to \mathfrak{g}^0\to \mathfrak{g}\to \mathfrak{g}_0\to 1,$$
 where $\mathfrak{g}^0$ is infinitesimal and $\mathfrak{g}_0$ is \'{e}tale.  Over a separably closed field, an \'{e}tale finite group scheme may be identified with a finite group $G$ and over an arbitrary ground field, it corresponds to an action of the absolute Galois group on a finite group. This paper deals with the model theory of the actions of the pure finite group $G$.

\begin{remark}
It looks like the case of an arbitrary \'{e}tale finite group scheme can not be easily reduced to the case of a constant group scheme. One could hope that if $\mathfrak{g}_0$ is an \'{e}tale finite group scheme such that for a finite Galois extension $k\subseteq l$, the $l$-group scheme $\mathfrak{g}_0\otimes_kl$ is constant and corresponds to a finite group $G$, then the existentially closed $\mathfrak{g}_0$-group scheme actions may be understood using the models of $G-\tcf$. Unfortunately, this idea does not work in a direct way. Take for example
$$\mathfrak{g}_0:=\mu_{16,\mathbb{Q}}\times \Zz/2\Zz$$
and a $\mathfrak{g}_0$-action on $\spec(\mathbb{Q}(\zeta_8))$ which is trivial on $\mu_{16,\mathbb{Q}}$ and which is given on $\Zz/2\Zz$ by $\phi_1$ as in Example \ref{ex311}.
By Example \ref{ex311}, it is impossible to extend this $\mathfrak{g}_0$-action to any field containing $\zeta_{16}$, and only over such fields the group scheme $\mu_{16,\mathbb{Q}}$ becomes a constant group scheme. We will deal with the model theory of \'{e}tale finite group scheme actions (and more generally, any finite group scheme actions) in a forthcoming paper.
\end{remark}

One could also consider the model theory of $G$-transformal fields for an arbitrary (possibly infinite) $G$. However, the class of existentially closed $G$-transformal fields is often not elementary, as was mentioned in the introduction. Sj\"{o}gren develops some model theory of the class of existentially closed $G$-transformal fields in general, see \cite{sjogren}. We quote below one result from \cite{sjogren}, which we find particularly interesting.
\begin{theorem}[Theorem 6 in \cite{sjogren}]
An existentially closed $G$-transformal fields is algebraically closed if and only if the profinite completion of $G$ (denoted $\widehat{G}$) is a projective group.
\end{theorem}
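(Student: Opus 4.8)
The plan is to characterize when an existentially closed $G$-transformal field is algebraically closed via the description of its absolute Galois group obtained in Theorem \ref{sjor}. Recall that for a model $(K,\bar{\sigma})\models G-\tcf$ with $C=K^G$, that theorem gives $\gal(C)\cong \widetilde{G}$, the universal Frattini cover of $G$, and $\gal(K)\cong\ker(\widetilde{G}\to G)$. (For an arbitrary group $G$ one replaces $G$ by its profinite completion $\widehat{G}$, since $K$-automorphisms see only the finite quotients of $G$ acting on algebraic extensions; this is the content of Sj\"{o}gren's more general results referenced in Remark \ref{remsjo}.) Thus $K$ is algebraically closed if and only if $\gal(K)$ is trivial, i.e. if and only if the universal Frattini cover $\widetilde{\widehat{G}}\to\widehat{G}$ is an isomorphism.

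The heart of the matter is therefore the purely group-theoretic claim: the universal Frattini cover $\widetilde{H}\to H$ of a profinite group $H$ is an isomorphism if and only if $H$ is projective. This is immediate from Theorem \ref{thmuft}: if $H$ is projective, then $\id_H:H\to H$ is already a projective cover, hence by minimality (condition (3) of Theorem \ref{thmuft}) the universal Frattini cover $\widetilde{H}\to H$ must be an isomorphism; conversely, if $\widetilde{H}\to H$ is an isomorphism, then $H\cong\widetilde{H}$ is projective by condition (1) of Theorem \ref{thmuft}. Combining this with the identification $\gal(K)\cong\ker(\widetilde{\widehat{G}}\to\widehat{G})$, we conclude that $K$ is algebraically closed exactly when $\widehat{G}$ is projective.

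One should also address why it suffices to work with $\widehat{G}$ rather than $G$ itself: a $G$-action on a field factors through $\widehat{G}$ on every finite (equivalently, every algebraic) sub-extension, and being algebraically closed is a statement about algebraic extensions only, so the relevant profinite data attached to $(K,\bar\sigma)$ depends only on $\widehat G$. The argument of Theorem \ref{sjor} ($G$-closedness forces $\gal(C)\to \widehat{G}$ to be a Frattini cover by Lemma \ref{alg_ext}, and PAC-ness of $C$ by Theorem \ref{equiv} forces $\gal(C)$ to be projective, whence it is \emph{the} universal Frattini cover by Theorem \ref{thmuft}) goes through verbatim with $\widehat G$ in place of $G$. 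The only genuine obstacle is making sure the infinite-$G$ generalizations of Theorem \ref{sjor} and of the $G$-closedness framework are set up carefully (the class of existentially closed $G$-transformal fields need not be elementary), but modulo Sj\"{o}gren's results this is routine; the finite-$G$ case needs nothing beyond what is already in this paper.
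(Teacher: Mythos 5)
The paper itself offers no proof of this statement: it is quoted verbatim from Sj\"{o}gren, and it is stated for an \emph{arbitrary}, possibly infinite, group $G$ --- indeed it appears precisely in the paragraph warning that in this generality the class of existentially closed $G$-transformal fields need not even be elementary. So your proposal has to stand on its own, and its group-theoretic core does: given an isomorphism $\gal(K)\cong\ker(\widetilde{H}\to H)$ with $H=\widehat{G}$, triviality of $\gal(K)$ is equivalent to $\widetilde{H}\to H$ being an isomorphism, which by Theorem \ref{thmuft} is equivalent to projectivity of $H$ (your argument via condition (3), or simply: a Frattini cover of a projective group splits, and a split Frattini cover is an isomorphism). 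For \emph{finite} $G$ the derivation from Theorem \ref{sjor} is correct and consistent with Theorem \ref{non-sep}, since a nontrivial finite group is never projective.

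The gap is the passage to infinite $G$, which is the actual content of the quoted theorem. You claim the argument of Theorem \ref{sjor} ``goes through verbatim with $\widehat{G}$ in place of $G$'', but that argument rests entirely on the finite Galois extension $C=K^G\subseteq K$ with $\gal(K/C)\cong G$ and on the $G$-closed-field machinery (Lemma \ref{alg_ext}, Theorem \ref{equiv}). When $G$ is infinite, Artin's theorem forces $[K:K^G]$ to be infinite, and $K/K^G$ need not even be algebraic (for $G=\Zz$ and $K\models$ ACFA it is transcendental), so none of that framework applies; the restriction map $\gal(K^G)\to\gal(K/K^G)$ is not a map onto $\widehat{G}$ in any useful sense. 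Establishing $\gal(K)\cong\ker(\widetilde{\widehat{G}}\to\widehat{G})$ in general requires a genuinely different argument, working directly with $\gal(K)$ and with lifting the $G$-action along $\aut(K^{\alg})\to\aut(K)$. Moreover, deferring to ``Sj\"{o}gren's more general results'' at exactly this point is circular: the generalized Theorem \ref{sjor} \emph{is} Sj\"{o}gren's Theorems 5--6, i.e.\ essentially the statement to be proved. In short: correct and self-contained for finite $G$, but the general case is not established by what you have written.
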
 
\begin{remark} 
Note that the following profinite groups:
\begin{itemize}
\item $\widehat{\mathbb{Z}}$ (corresponding to ACFA);
\item$\widehat{F_n}$ (corresponding to ACFA$_n$);
\item $\widehat{\mathbb{Q}}=0$ (corresponding to $\mathbb{Q}$ACFA)
\end{itemize}
are projective. On the other hand $\hat{\Zz}\times\hat{\Zz}$ (corresponding to fields with two commuting automorphisms) and finite groups (corresponding to the theory $G-\tcf$) are not.
\end{remark}

\begin{question}\label{last.questions}
We finish this paper with stating several questions.
\begin{enumerate}
\item Is there an algebraic (or first-order) description of perfect fields $C$ which are $\gal(K/C)$-closed for a finite Galois extension $C\subseteq K$?
\\
\\
Obviously such fields can be characterized by the existence of an open subgroup $\mathcal{N}\leqslant \gal(C^{\alg}/C)$ such that the quotient map
$$\gal(C^{\alg}/C)\to \gal(C^{\alg}/C)/\mathcal{N}$$
is the universal Frattini cover (in particular, the profinite group $\gal(C^{\alg}/C)$ is projective).

\item Let $G$ be an arbitrary group. Does $G-\tcf$ exist if and only if $\Zz\times \Zz$ does not embed into $G$?
\\
\\
It is hinted in \cite{FrJa} that the left-to-right implication in Question \ref{last.questions}.(2) holds, but we do not know any argument for that.
There may be some hints in the two last sections of \cite{sjogren} that the right-to-left implication does not hold.
\end{enumerate}
\end{question}

\bibliographystyle{plain}
\bibliography{1nacfa}

\end{document}